\newtheorem{theorem}{Theorem}[section]
\newtheorem{lemma}[theorem]{Lemma}
\newtheorem{corollary}[theorem]{Corollary}
\newtheorem{claim}[theorem]{Claim}
\newtheorem{proposition}[theorem]{Proposition}
\newtheorem{definition}[theorem]{Definition}
\newtheorem{example}[theorem]{Example}
\newtheorem{remark}[theorem]{Remark}
\newcommand{\field}[1]{\mathbb{#1}}
\newcommand{\R}{\field{R}}
\newcommand{\wh}[1]{\widehat{#1}}
\newcommand{\wt}[1]{\widetilde{#1}}
\newcommand{\aph}{{\alpha}}
\newcommand{\ba}{{\bf a}}
\newcommand{\bb}{{\bf b}}
\newcommand{\bbo}{{\bf 0}}
\newcommand{\bB}{{\bf B}}
\newcommand{\bc}{{\bf c}}
\newcommand{\bg}{{\bf g}}
\newcommand{\bt}{{\bf t}}
\newcommand{\bP}{{\bf P}}
\newcommand{\bs}{{\bf s}}
\newcommand{\bS}{{\bf S}}
\newcommand{\bu}{{\bf u}}
\newcommand{\bv}{{\bf v}}
\newcommand{\bx}{{\bf x}}
\newcommand{\by}{{\bf y}}
\newcommand{\bz}{{\bf z}}
\newcommand{\cB}{\mathcal{B}}
\newcommand{\cC}{\mathcal{C}}
\newcommand{\cH}{\mathcal{H}}
\newcommand{\cM}{\mathcal{M}}
\newcommand{\cS}{\mathcal{S}}
\newcommand{\cU}{\mathcal{U}}
\newcommand{\clos}{{\rm clos}}
\newcommand{\dist}{{\rm dist}}
\newcommand{\dlt}{{\delta}}
\newcommand{\Dlt}{{\Delta}}
\newcommand{\gm}{{\gamma}}
\newcommand{\Gm}{{\Gamma}}
\newcommand{\gR}{{\geq R}}
\newcommand{\inn}{{\rm in}}
\newcommand{\iop}{{\iota_p}}
\newcommand{\ioq}{{\iota_q}}
\newcommand{\lbd}{{\lambda}}
\newcommand{\lgt}{{\ell}}
\newcommand{\lr}{{\leq r}}
\newcommand{\lR}{{\leq R}}
\newcommand{\omg}{\omega}
\newcommand{\rd}{{\rm d}}
\newcommand{\Rgo}{{\R_{\geq 0}}}
\newcommand{\Rn}{{\R^n}}
\newcommand{\Rp}{{\R^p}}
\newcommand{\Rq}{{\R^q}}
\newcommand{\sgm}{\sigma}
\newcommand{\tht}{{\theta}}
\newcommand{\ve}{\varepsilon}
\newcommand{\wtap}{\widetilde{\alpha}}
\newcommand{\wtbe}{\widetilde{\beta}}
\newcommand{\wtgm}{{\widetilde{\gm}}}
\newcommand{\wtphi}{{\widetilde{\phi}}}
\newcommand{\wtX}{\widetilde{X}}
\numberwithin{equation}{section}
\numberwithin{equation}{section}
\begin{document}
\title[One point compactification of LNE definable sets]{One point compactification and Lipschitz normally embedded definable subsets}

\author[A. Costa]{Andr\'e Costa}
%
\author[V. Grandjean]{Vincent Grandjean}

\author[M. Michalska]{Maria Michalska}
\address{A. Costa, Departamento de Matem\'atica, 
Universidade Federal do Cear\'a
(UFC), Campus do Pici, Bloco 914, Cep. 60455-760. Fortaleza-Ce,
Brasil}
\email{andrecosta.math@gmail.com}
\address{V. Grandjean, Departamento de Matem\'atica, 
Universidade Federal de Santa Catarina, 
88.040-900 Florianópolis - SC, Brasil}
\email{vincent.grandjean@ufsc.br}
\address{M. Michalska, Wydzia\l{} Matematyki i Informatyki, Uniwersytet 
\L{}\'o{}dzki, Banacha 22, 90-238 \L{}\'o{}d\'z{}, Poland}
\email{maria.michalska@wmii.uni.lodz.pl}

\subjclass[2000]{}




\begin{abstract}
A closed subset of $\Rq$, definable in some given o-minimal structure, is Lipschitz normally embedded in $\Rq$ if and only if its one-point compactification is Lipschitz normally embedded in the unit sphere $\bS^q$($ = \Rq \cup \{\infty \}$), i.e. it is the 
stereographic projection of a set Lipschitz normally embedded in~$\bS^q$. This implies that 
 any closed connected unbounded definable subset of an Euclidean space is
 definably inner bi-Lipschitz homeomorphic to a Lipschitz normally embedded definable set.
\end{abstract}

\maketitle
\tableofcontents


\section*{Introduction}
One can equip any subset $S$ of $\Rq$ with two metric space structures induced by
the ambient metric: 
the outer metric structure $(S,d_S)$, where the (outer) distance $d_S$ is the 
restriction of the euclidean distance in $\Rq$, and the inner metric structure 
$(S,d_\inn^S)$, where the (inner) distance is the infimum of lengths of 
rectifiable curves contained in $S$ joining two given points. 
A classical problem introduced by Whitney \cite{Whi1,Whi2} is to ask when the 
two metric space structures are equivalent, i.e. when does there exist a 
positive constant $L$ such that
$$
d_S \; \leq \; d_\inn^S \; \leq \; L\cdot d_S.
$$
Since the influential paper \cite{BiMo} subsets which satisfy this property
are called Lipschitz Normally Embedded (shortened to LNE).
In the last decade there were quite a few works published on 
germs admitting a LNE representative, almost exclusively for complex 
analytic curve and surface singularities (see 
\cite{MeSa,FaPi} for an overview). Although there is a growing interest in Lipschitz geometry of affine
algebraic sets, authors always deal with the behaviour at 
infinity separately.
Characterization of LNE 
sets at infinity was addressed recently in the very restrictive case in 
\cite{FeSa2} and  in \cite{DiRi} presenting a necessary condition in terms of 
tangent 
cones. Yet prior to the first author PhD's results \cite{Cos} and our papers \cite{CoGrMi1, CoGrMi2} which show that LNE is a generic property of algebraic sets, 
the only non-trivial examples of 
(unbounded) algebraic LNE sets were provided in \cite{KePeRu}. 

The main motivation of this paper is to provide a convenient 
framework to study LNE subsets at infinity.  
In this article we prove that in the category of definable sets of $\Rq$ one can equivalently study their LNE property under one-point compatification. More 
precisely, consider
$$
\sgm_q :\Rq \to \bS^q\setminus N_q, \;\; {\rm for} \;\;
N_q=(0,\ldots,0,1) \in\R^{q+1},
$$
the inverse of the stereographic projection centred at $N_q$.
The main result of the paper is: 

\medskip\noindent
{\bf Theorem \ref{thm:main}.} \em 
A closed definable subset $X$ is LNE in $\Rq$ if and only if
$\clos(\sgm_q(X))$ is LNE in $\bS^q$. \em

\medskip
Using Theorem~\ref{thm:main} one can apply all standard local methods to the global problem, as a sample we give Corollaries~\ref{cor:LNE-inversion} and \ref{cor:LNE-inversionLINK}. The main result of this paper is indeed very natural as illustrated by the fact that some time after publishing this article, another preprint~\cite{Sam} reproved our Theorem \ref{thm:main} and Corollary  \ref{cor:LNE-inversionLINK}, as well as a result of second author of~\cite{GrOl}, as far as we know its methods differ from ours.
To further demonstrate the power of this tool, as its rather straightforward application we can remove the assumption of compactness from the main
result of \cite{BiMo} and obtain the result as follows

\medskip\noindent
{\bf Theorem \ref{prop:LNE-model}.} \em 
For every closed connected  definable subset $X$ of $\Rp$ there exists a connected closed 
definable  subset $Y$ of $\Rq$ which is LNE and  definably inner bi-Lipschitz homeomorphic to $X$.	\em

\medskip
The paper is organized as follows. After introducing  some
necessary notions in Section \ref{section:P}, we investigate pairs of definable curves and their asymptotic behaviour 
at $\bbo$ or $\infty$  in Section \ref{section:IODDA}. 
Section~\ref{section:L-RD} presents fundamental results for the rest of the article: Proposition \ref{prop:defin-metrics-param} and Corollary 
\ref{cor:repres-LNE-local} about LNE representatives of germs.  Section  
\ref{section:LNEaI} studies properties of the unbounded part of a
definable LNE subset. Section \ref{section:LLNEaIvsO} presents our second 
essential tool: the inversion taking the origin to infinity. The main result in Theorem \ref{thm:main} is 
proved in Section~\ref{section:MR} and its application to inner Lipschitz classification is Section \ref{section:LNEmodels}. In the last Section
\ref{section:examples} we provide some examples of interest.


%
%
%
%
%
%
%
%
%
%
%
%
%
%
%
%
%
%
%
%
%
%
%
%
%
%
%
%
%

\section{Preliminaries}\label{section:P}

	Throughout the paper we fix an o-minimal structure $\cM$ expanding the 
	ordered real field $(\R,+,\cdot,\geq)$.  
	In the sequel the adjective \em definable \em means definable in $\cM$
	(see \cite{vdDMi,vdD}).

The Euclidean space $\Rq$ is equipped with the Euclidean distance, denoted
$|-|$. We denote by $B_r^q$ the open ball of $\Rq$ of radius $r$ and centred 
at the origin $\bbo$, by $\bB_r^q$ its closure and by $\bS_r^{q-1}$ its 
boundary. The open ball of radius $r$ and centre $\bx_0$ is $B^q(\bx_0,r)$, 
its closure is $\bB^q(\bx_0,r)$ and $\bS^{q-1}(\bx_0,r)$ is its boundary.
The unit sphere of $\Rq$ is $\bS^{q-1}$.

Let $\Rgo := [0,\infty)$. The half-line in the oriented direction (of the 
vector) $\bu\in\Rq\setminus \bbo$ is 
$$
\Rgo \bu := \{t\bu : t \in \Rgo\}.
$$
The non-negative cone over the subset $S$ of $\Rq$ with vertex $\bx_0$ is 
defined as 
$$
\wh{S}^+ := \bx_0 + \cup_{\bx\in S\setminus\bx_0}\,\Rgo(\bx-\bx_0).
$$

\bigskip
Let $(M,d_M)$ be a metric space such that any two points can be joined by a rectifiable curve.

\begin{definition}\label{def:curves}
	An arc in  $M$ is a continuous 
	mapping $\gm:I\to M$ over a real interval $I$ of $\R$.
	
	The length $\lgt(\gm)$ of an arc $\gm: I\to M$ is
$$\lgt(\gm):= \sup \sum_{j=1}^m d_M(\gm(t_{j-1}), \gm(t_{j})), $$
where the supremum is taken over all sequences $t_0<\dots< t_m$ in the interval $I$.
\end{definition}

Any subset $S$ of $M$ admits two natural metric space structures 
inherited from $(M,d_M)$:
\begin{definition}\label{def:outer-inner}
1) The \em outer metric space structure $(S,d_S)$, \em is equipped with 
the \em outer distance \em function $d_S$, 
restriction of $d_M$ to $S\times S$. 

\smallskip\noindent
2) The \em inner metric space structure $(S,d_\inn^S)$, \em is equipped with
the \em inner distance \em function 
$$
d_\inn^S: S\times S \to [0,+\infty]
$$
defined as follows: given $\bx,\bx'\in S$, the number $d_\inn^S(\bx,\bx')$ is the infimum of the lengths
of the rectifiable paths lying in $S$ joining $\bx$ and $\bx'$.   
\end{definition}
Observe that $d_S \leq d_\inn^S$ and the length of an arc remains the same regardless whether it is taken with respect to the inner or outer metric. 

\begin{definition}\label{def:LNE}
i) A subset $S$ of $(M,d_M)$ is \em Lipschitz normally 
embedded \em (shortened to LNE) if 
there exists a positive constant $L$ such 
that
$$
\bx,\bx' \; \in \; S \; \Longrightarrow \; d_\inn^S(\bx,\bx') \; \leq \;
L \cdot d_S(\bx,\bx').
$$
Any constant $L$ satisfying the previous inequality is called a \em LNE 
constant of $S$. \em 

\smallskip
(ii) The subset $S$ of $(M,d_M)$ is \em locally LNE at $\bx_0$ \em if 
there exists a neighbourhood $U$ of $\bx_0$ in $M$ such that
$S\cap U$ is LNE.
\end{definition}

\noindent
Note that LNE property of the subset depends on the ambient metric of the space and in general the ambient metric space itself may not be LNE. 
An important  class of  metric spaces $(M,d_M)$ are those where $d_M = 
d_\inn^M$, that is the metric $d_M$ is a length metric. 
Connected Riemannian manifolds are examples of such spaces. Indeed, given 
a connected Riemannian manifold $(M,g_M)$, it is naturally equipped with the length 
metric $d_M = d_\inn^M$ induced by its metric tensor $g_M$. The next result characterizes completely compact LNE subsets in Riemannian manifolds
(see \cite[Proposition 2.4]{KePeRu} and \cite[Lemma 2.6]{CoGrMi1}).
\begin{lemma}\label{lem:compact-LNE}
A connected compact subset of the $C^\infty$ Riemannian manifold $(M,g_M)$ is 
LNE if and only if it is locally LNE
at each of its points.
\end{lemma}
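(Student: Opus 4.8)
The forward implication is immediate from the definitions: if $S$ is LNE with constant $L$, then for every $\bx_0 \in S$ and every neighbourhood $U$ of $\bx_0$, the inner distance $d_\inn^{S \cap U}$ is bounded below by $d_\inn^S$ (fewer admissible paths can only increase the infimum), hence $d_S \leq d_\inn^{S \cap U}$ trivially and $d_\inn^{S}(\bx,\bx') \leq L\, d_S(\bx,\bx')$ for $\bx,\bx' \in S \cap U$; one still needs that $d_\inn^{S\cap U}$ and $d_\inn^{S}$ agree for points sufficiently close, but this follows because for $\bx,\bx'$ close in $S$ the short $S$-paths realizing (nearly) $d_\inn^S$ stay inside $U$, since their length is controlled by $L\, d_S(\bx,\bx')$ which is small. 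So $S$ is locally LNE at each point.

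The substance is the converse. The plan is to argue by contradiction using a compactness/sequence argument together with the curve-selection lemma available in the Riemannian (or, after trivializing via charts, semialgebraic-like) setting. Suppose $S$ is locally LNE at each point but not LNE. Then there exist sequences $\bx_n, \bx_n' \in S$ with $d_\inn^S(\bx_n,\bx_n') > n\, d_S(\bx_n,\bx_n')$. By compactness of $S$, after passing to a subsequence, $\bx_n \to \ba$ and $\bx_n' \to \ba'$ for some $\ba,\ba' \in S$. The key dichotomy is whether $\ba = \ba'$ or $\ba \neq \ba'$.

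If $\ba \neq \ba'$, then $d_S(\bx_n,\bx_n')$ is bounded below by a positive constant, while $d_\inn^S(\bx_n,\bx_n')$ must blow up; but since $S$ is connected and compact in a Riemannian manifold, one shows $d_\inn^S$ is finite and in fact bounded on $S \times S$ — here one covers $S$ by finitely many balls on which $S$ is LNE (local LNE plus shrinking), uses connectedness to chain any two points through finitely many such balls, and gets a uniform bound on $d_\inn^S$ — contradiction. If $\ba = \ba'$, pick a single neighbourhood $U$ of $\ba$ on which $S \cap U$ is LNE with some constant $L_0$; for $n$ large, both $\bx_n$ and $\bx_n'$ lie in $U$, and the inner distance $d_\inn^{S}(\bx_n,\bx_n')$ is at most $d_\inn^{S\cap U}(\bx_n,\bx_n') \leq L_0\, d_{S\cap U}(\bx_n,\bx_n') = L_0\, d_S(\bx_n,\bx_n')$, again contradicting the growth $> n\, d_S(\bx_n,\bx_n')$.

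The main obstacle is the case $\ba \neq \ba'$: one must rule out that $d_\inn^S$ is unbounded on a compact connected set. The clean way is the finite-cover-and-chain argument sketched above, which requires that local LNE at a point actually gives an LNE neighbourhood (not merely "locally LNE") — this is part (ii) of Definition~\ref{def:LNE}, so it is available — together with the elementary fact that a finite concatenation of rectifiable paths is rectifiable with additive length. One should be slightly careful that the chaining produces paths lying in $S$ (not just in the union of the local pieces), which holds because consecutive balls in the cover overlap inside $S$ by connectedness of $S$. No deeper input (o-minimality, curve selection) is strictly needed for this lemma; it is a general Riemannian statement, and the cited references~\cite[Proposition 2.4]{KePeRu} and~\cite[Lemma 2.6]{CoGrMi1} presumably carry out exactly this kind of argument. $\hfill\bs$
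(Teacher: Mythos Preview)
Your converse argument is correct: the compactness-plus-chaining step disposes of the case $\ba \neq \ba'$ by bounding $d_\inn^S$ uniformly on $S\times S$, and local LNE at the common limit handles $\ba = \ba'$. The paper itself supplies no proof of this lemma, citing instead \cite[Proposition 2.4]{KePeRu} and \cite[Lemma 2.6]{CoGrMi1}, which carry out essentially the argument you outline.

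One minor simplification for the forward implication: since Definition~\ref{def:LNE}(ii) only requires the existence of \emph{some} neighbourhood $U$ of $\bx_0$ with $S\cap U$ LNE, you may simply take $U=M$, so that $S\cap U = S$ is LNE by hypothesis. This makes the forward direction entirely trivial, and your discussion of whether short $S$-paths stay inside $U$ is unnecessary (and, as written, slightly muddled, since it does not quite show that $S\cap U$ is LNE for the \emph{given} $U$).
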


In the Euclidean space $\Rq$ every complement of a compact set is considered to be the neighborhood of infinity. Thus we extend Definition~\ref{def:LNE} accordingly:
\begin{definition}\label{def:LNE-infty}
The subset $S$ of $\Rq$ is \em locally LNE at $\infty$ \em if 
there exists a neighbourhood~$U$ of $\infty$ in $\Rq$ such that
$S\cap U$ is LNE.
\end{definition}%

Throughout this paper we will work principally with closed subsets, because 
obviously if a set~$S$ is LNE, then $\clos(S)$ is LNE with the same constant.

%
%
%
%
%
%
%
%
%
%
%
%
%
%
%
%
%
%
%
%
%
%
%
%
%
%
%
%
%
%
%
%
%
\section{Inner and outer distances and definable arcs}\label{section:IODDA}

We recall some results of \cite{dAc} about definable curves. Let $Y$ be a 
closed definable subset of $\Rq$. Any definable arc $\gm : [0,1] \to Y$ 
is rectifiable since it is piecewise $C^k$, for any $k\geq 1$.

We can assume without loss of generality that $\gm(0) = \bbo$. 
Let $g(t) := |\gm(t)|$. There exist a unit 
vector $\bu\in\bS^{q-1}$ and a continuous definable path $\bv:[0,1] \to \Rq$ 
such that
\begin{equation}\label{eq:arc}
\gm(t) = g(t)(\bu + \bv(t)) \;\; {\rm with} \;\; \bv(t) \to \bbo \;\; 
{\rm as} \;\; t\to 0.
\end{equation}
Moreover $\gm$ is $C^k$ over $(0,a_k)$ for some $a_k>0$
and \cite[Lemma 2.6]{dAc} yields
\begin{equation}\label{eq:arc-derivative}
\gm'(t) = g'(t) \bu + o(g'(t)) \;\; {\rm as} \;\; t\to 0.
\end{equation}
Similarly, let $\gm : [1,\infty) \to Y$ be an unbounded definable arc such 
that $g(t) :=|\gm(t)| \to \infty$ as $t$ goes to $\infty$. It is also 
piecewise $C^k$ for any $k\geq 1$. There exist a unit vector 
$\bu\in\bS^{q-1}$ 
and a continuous path $\bv:[1,\infty) \to \Rq$ such that
\begin{equation}\label{eq:arc-infty}
\gm(t) = g(t)(\bu + \bv(t)) \;\; {\rm with} \;\; \bv(t) \to \bbo  \;\; 
{\rm as} \;\; t\to \infty.
\end{equation}
The arc $\gm$ is $C^k$ over $(A_k,\infty)$ for some $A_k>0$ and 
we still have
\begin{equation}\label{eq:arc-infty-derivative}
\gm'(t) = g'(t) \bu + o(g'(t)) \;\; {\rm as} \;\; t\to \infty.
\end{equation}

\medskip
The next result is known, but we provide proof to show the methods.
\begin{lemma}\label{lem:inner-outer-arc}
Let $Y$ be a definable subset of $\Rq$. 

(i) Let $\bx_0$ be a point of $Y$ such that $\dim (Y,\bx_0) \geq 1$. 
Let $\gm :(0,1]\to Y\setminus\bx_0$ be any definable arc such that
$\gm(t)$ goes to $\bx_0$ as $t$ goes to $0$. The following estimate holds 
true 
$$
\lim_{t\to 0}\, \frac{\lgt(\gm(]0,t]))}{|\gm(t) - \bx_0|} = 1.
$$

(ii) Let $\gm : [1,\infty) \to Y$ be any definable arc such that
$|\gm(t)|$ goes to $\infty$ as $t$ goes to $\infty$. 
The following estimate holds 
true 
$$
\lim_{t\to \infty}\, \frac{\lgt(\gm([1,t]))}{|\gm(t)|} = 1.
$$ 
\end{lemma}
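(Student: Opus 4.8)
The plan is to treat both parts in parallel, since the bounded case at $\bx_0$ and the unbounded case at $\infty$ are formally analogous once one uses the normal forms \eqref{eq:arc}--\eqref{eq:arc-derivative} and \eqref{eq:arc-infty}--\eqref{eq:arc-infty-derivative}. First I would normalize: for part (i) translate so that $\bx_0 = \bbo$, write $g(t) = |\gm(t)|$, and recall that $\gm$ is $C^k$ on some interval $(0,a)$ with $\gm(t) = g(t)(\bu + \bv(t))$, $\bv(t)\to\bbo$. Since $\gm$ is definable and $g(t)\to 0$ monotonically on a small enough interval, $g$ is eventually strictly monotone; I would then observe that $g'(t)$ has constant sign near $0$, so that $\lgt(\gm(]0,t])) = \int_0^t |\gm'(s)|\,ds$ and, by \eqref{eq:arc-derivative}, $|\gm'(s)| = |g'(s)|\,(1 + o(1))$ as $s\to 0$. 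Hence
$$
\lgt(\gm(]0,t])) = \int_0^t |g'(s)|\,(1+o(1))\,ds.
$$

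The main point is then to compare $\int_0^t |g'(s)|\,ds$ with $|\gm(t)| = g(t)$. Because $g(t)\to 0$ and $g$ is eventually monotone, on a small interval $g$ is decreasing, so $\int_\tau^t |g'(s)|\,ds = g(\tau) - g(t)$ for $0 < t < \tau$ small; letting $\tau \to 0$ gives $\int_0^t |g'(s)|\,ds = g(t)$. Combined with the estimate above and the standard fact that $\int_0^t \phi(s)|g'(s)|\,ds = (1+o(1))\int_0^t |g'(s)|\,ds$ whenever $\phi(s)\to 1$ (split the interval at a point where $|\phi - 1|$ is small and bound each piece), one gets
$$
\lim_{t\to 0}\frac{\lgt(\gm(]0,t]))}{g(t)} = \lim_{t\to 0}\frac{\int_0^t |g'(s)|\,ds}{g(t)} = 1,
$$
which is exactly the claim, since $g(t) = |\gm(t) - \bx_0|$.

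For part (ii) the argument is identical \emph{mutatis mutandis}: now $g(t) = |\gm(t)| \to \infty$, so $g$ is eventually increasing, $g'(s) > 0$ for large $s$, and $\int_1^t g'(s)\,ds = g(t) - g(1)$, while by \eqref{eq:arc-infty-derivative} the arclength is $\int_1^t |\gm'(s)|\,ds = \int_1^t g'(s)(1+o(1))\,ds$; dividing by $g(t)$ and using $g(t)\to\infty$ kills the constant $g(1)$ in the limit and yields the ratio $1$.

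The step I expect to require the most care is justifying that one may pass from the pointwise estimate $|\gm'(s)| = |g'(s)|(1+o(1))$ to the integrated estimate $\lgt(\gm(]0,t])) = (1+o(1))\int_0^t|g'(s)|\,ds$ near $t=0$ — in particular handling the possibility that $g'$ is not integrable in a naive sense or changes sign; here the o-minimality is essential, as it guarantees $g$ is piecewise $C^1$ and monotone on a neighbourhood of the endpoint, so $|g'|$ is integrable with $\int_0^t|g'| = g(t)$ (resp. $\int_1^t g' = g(t)-g(1)$), and the $o(1)$ factor, being itself a definable function of $s$ tending to $0$, can be controlled uniformly on shrinking intervals. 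Everything else is a routine $\varepsilon$--$\delta$ estimate splitting the integration interval near the bad endpoint.
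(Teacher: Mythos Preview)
Your proof is correct and follows essentially the same route as the paper: use the normal forms \eqref{eq:arc-derivative} (resp.\ \eqref{eq:arc-infty-derivative}) to get $|\gm'(s)| = |g'(s)|(1+o(1))$, integrate, and use the eventual monotonicity of $g$ to identify $\int_0^t|g'|$ with $g(t)$; the paper simply compresses this into the single line ``we deduce that $\lgt(\gm([0,t])) = g(t) + o(g(t))$''. One small slip to fix: in part (i) you have the monotonicity backwards --- since $g(t)\to 0$ as $t\to 0^+$ and $g>0$, the function $g$ is \emph{increasing} near $0$, so for $0<\tau<t$ one has $\int_\tau^t |g'| = g(t)-g(\tau)$ and lets $\tau\to 0^+$ (not $\tau>t$, $\tau\to 0$ as written).
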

\begin{proof}
We start with (i). We can assume that $\gm$ extends definably and $C^1$ over 
$[0,r]$ for some positive $r\leq 1$. From Equation \eqref{eq:arc}, defining 
$g(t) := |\gm(t) - \bx_0|$, we can write
$$
\gm(t) - \bx_0 = g(t) ( \bu + \bv(t) ) = g(t)\bu + o(g(t)) \;\; {\rm as} 
\;\; t \to 0,
$$ 
as well as, in using Equation \eqref{eq:arc-derivative}, we get
$$
\gm'(t) = g'(t) \bu + o(g'(t)) \;\; {\rm as} 
\;\; t \to 0.
$$ 
We deduce that $\lgt(\gm([0,t])) = g(t) + o(g(t))$, proving the desired 
estimate.

\medskip\noindent
Similar arguments and estimates as $t$ goes to $\infty$, 
using Equations \eqref{eq:arc-infty} and \eqref{eq:arc-infty-derivative} 
yield (ii). 
\end{proof}
Let $\gm$ be an arc in $Y$ as in (i) or (ii) of Lemma 
\ref{lem:inner-outer-arc}. 
Let $(\aph,\omg) := (0,0)$ in the case (i) and $(\aph,\omg) := (1,\infty)$ 
in the case (ii).
Let $I_\omg := [0,1]$ if $\omg = 0$ and $I_\omg := [\aph,\infty)$ 
when $\omg =\infty$. Lemma \ref{lem:inner-outer-arc} implies the following 
estimates
\begin{equation}\label{eq:inner-infty}
\lim_{t\in I_\omg, t \to \omg} \frac{d_\inn^Y(\gm(\aph),\gm(t))}{|\gm(\aph) - 
\gm(t)|} = 1.
\end{equation}

\smallskip
%
%
\begin{lemma}\label{lem:asympt-u1-u2}
Let $Y$ be a definable connected subset of $\Rq$. Consider the two 
following settings:

(i) Let $\bx_0$ be a point of $Y$. Let $\by_1,\by_2 :]0,1] \to Y\setminus 
\bx_0$ be two definable paths such that $|\by_i - \bx_0| \to 0$
as $t\to 0$, for $i=1,2$. Define 
$$
\bu_1 := \lim_0 \frac{\by_1-\bx_0}{|\by_1-\bx_0|}, \;\; 
\bu_2 := \lim_0 \frac{\by_2-\bx_0}{|\by_2-\bx_0|} \;\; {\rm and} \;\; 
\lbd := \lim_0 \frac{|\by_2-\bx_0|}{|\by_1-\bx_0|} \in [0,\infty].
$$
If either $\bu_1 \neq \bu_2$ or $\bu_1 = \bu_2$ and $\lbd \neq 1$ is 
satisfied then
$$
\limsup_{t\to 0} \frac{d_\inn^Y(\by_1(t),\by_2(t))}{|\by_1(t) - \by_2(t)|} 
< \infty.
$$

(ii) Let $\by_1,\by_2 :[0,\infty) \to Y$ be two definable  paths 
such  that $|\by_i(t)| \to \infty$ as $t\to \infty$, for $i=1,2$. 
 Define 
$$
\bu_1 := \lim_\infty \frac{\by_1}{|\by_1|}, \;\; 
\bu_2 := \lim_\infty \frac{\by_2}{|\by_2|} \;\; {\rm and} \;\; 
\lbd := \lim_\infty \frac{|\by_2|}{|\by_1|} \in [0,\infty].
$$
If either $\bu_1 \neq \bu_2$ or $\bu_1 = \bu_2$ and $\lbd \neq 1$ is 
satisfied then
$$
\limsup_{t\to\infty} \frac{d_\inn^Y(\by_1(t),\by_2(t))}{|\by_1(t) - 
\by_2(t)|} < \infty.
$$
\end{lemma}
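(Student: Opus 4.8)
The plan is to reduce the two settings to a single argument by using the inversion or, more directly, by treating both cases with parallel estimates built from the arc decompositions \eqref{eq:arc}--\eqref{eq:arc-infty-derivative}. In both cases one wants a lower bound on the outer distance $|\by_1(t)-\by_2(t)|$ that is comparable to the ``individual scales'' $g_i(t) := |\by_i(t)-\bx_0|$ (resp.\ $g_i(t) := |\by_i(t)|$), together with an upper bound on the inner distance $d_\inn^Y(\by_1(t),\by_2(t))$ of the same order, the latter obtained by concatenating each $\by_i$ with a fixed rectifiable definable path in the connected set $Y$ to a common reference point.

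First I would set up case (i). Write $\by_i(t)-\bx_0 = g_i(t)(\bu_i + \bv_i(t))$ with $\bv_i(t)\to\bbo$. If $\bu_1\neq\bu_2$, then $|\by_1(t)-\by_2(t)| \geq c\max(g_1(t),g_2(t))$ for some $c>0$ and all small $t$, simply because the two directions are uniformly separated while $\bv_i\to\bbo$; if $\bu_1=\bu_2=:\bu$ but $\lbd\neq 1$, say $\lbd<1$ (the case $\lbd>1$ and $\lbd=\infty$ are symmetric, taking $\lbd=0$ into the first bullet-free sub-case), then $|\by_1(t)-\by_2(t)| = |g_1(t)-g_2(t)|\,(1+o(1)) \geq c' g_1(t)$ for small $t$. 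In either situation the outer distance dominates $\max(g_1(t),g_2(t))$ up to a constant. For the inner distance, fix a point $\bp\in Y$ and, using that $Y$ is connected and definable, join $\bx_0$ to $\bp$ inside $Y$ by a rectifiable path; then by Lemma \ref{lem:inner-outer-arc}(i) and Equation \eqref{eq:inner-infty}, $d_\inn^Y(\by_i(t),\bx_0)$ (or rather the inner distance from $\by_i(t)$ to a fixed point reached along $\by_i$) is $g_i(t)(1+o(1))$, so that by the triangle inequality $d_\inn^Y(\by_1(t),\by_2(t)) \leq C(g_1(t)+g_2(t))$ for small $t$. Dividing, the $\limsup$ is finite.

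For case (ii) I would run the identical argument with \eqref{eq:arc-infty}, \eqref{eq:arc-infty-derivative}, Lemma \ref{lem:inner-outer-arc}(ii), and the corresponding form of \eqref{eq:inner-infty}: now $g_i(t)\to\infty$, the separation of directions or of scales again forces $|\by_1(t)-\by_2(t)| \geq c\max(g_1(t),g_2(t))$, and concatenating the $\by_i$ with a fixed rectifiable path in $Y$ back to a common bounded point yields $d_\inn^Y(\by_1(t),\by_2(t)) \leq C(g_1(t)+g_2(t))$ for large $t$. The main obstacle — and the only point requiring care — is the degenerate mixed situation where $\bu_1=\bu_2$ and $\lbd=1$ is \emph{excluded} but $\lbd=0$ or $\lbd=\infty$ is allowed: here one of the $g_i$ is negligible relative to the other, so the outer distance is comparable to the \emph{larger} scale while one inner-distance contribution is comparable to the smaller; this is harmless for the upper bound on $d_\inn^Y$ but one must make sure the lower bound on $|\by_1-\by_2|$ still picks up $\max(g_1,g_2)$ rather than $\min(g_1,g_2)$, which it does precisely because the directions agree and $\lbd\notin\{1\}$. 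Writing all the $o(1)$ terms uniformly, using definability to guarantee the limits $\bu_i$, $\lbd$ exist and the auxiliary paths are rectifiable, completes the proof.
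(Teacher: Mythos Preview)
Your proposal is correct. For the sub-case $\bu_1\neq\bu_2$ in both (i) and (ii) it coincides with the paper's proof: the law of cosines gives $|\by_1-\by_2|\geq c\,(g_1+g_2)$, and concatenating the two arcs through $\bx_0$ (in (i)) or through $\by_1(0),\by_2(0)$ joined by a fixed definable path in $Y$ (in (ii)) bounds $d_\inn^Y$ by a multiple of $g_1+g_2$. The genuine divergence is in the sub-case $\bu_1=\bu_2$, $\lbd\neq 1$ of setting (ii). There the paper does \emph{not} run the arcs back to their starting points; instead it invokes \cite[Corollary 1.3]{KuPa} to produce, for each large $R$, a definable path $\theta_R$ inside $Y_{\leq y(R)}$ (with $y(R)=\min(|\by_1(R)|,|\by_2(R)|)$) of length at most $C\,y(R)$ joining $\by_1(R)$ to the point $\bz_2(R):=\by_2(y(R))$ on the other arc at the same radius, and then follows $\by_2$ out to $\by_2(R)$. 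Your route is more elementary---it needs only Lemma~\ref{lem:inner-outer-arc} and the definable path-connectedness of $Y$---and reaches the same conclusion, since both $g_1(t)+g_2(t)+O(1)$ and $|\by_1(t)-\by_2(t)|$ are comparable to $\max(g_1(t),g_2(t))$ whenever $\lbd\neq 1$. What the paper's construction buys is a connecting path whose length scales with the current radius rather than detouring to a fixed base point; this is unnecessary for the present lemma but previews the technique reused in the proof of Corollary~\ref{cor:repres-LNE-local}.
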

\begin{proof}
We deal first with the case (ii).

\medskip\noindent
$\bullet$ \em Suppose that $\bu_1 \neq \bu_2$. \em

\smallskip  
We can assume that $t= |\by_2(t)| \geq |\by_1(t)| =: y(t)$
for all $t$ large enough. 
We write 
$$
\by_1(t) = y(t) ( \bu_1 + o(1) ) \;\; {\rm and} \;\; \by_2(t) = t 
(\bu_2 + o(1) ), \;\; {\rm as} \;\; t \to \infty.
$$
Let $2\tht(t) \in [0,\pi]$ be the non-oriented angle between
$\by_1(t)$ and $\by_2(t)$. The law of cosines
\begin{equation}\label{eq:law-cosine}
e^2(t) := |\by_1(t) - \by_2(t)|^2 = (y(t)+t)^2 \sin^2 (\tht(t)) + 
(y(t) -t)^2 \cos^2 (\tht(t))
\end{equation}
yields $e^2(t)  \; \geq (y(t)+t)^2 \sin^2\tht(t)$.
For $t$ large enough the following
estimate holds true
$$
e(t) \; \geq \; \frac{y(t)+t}{2}\sin\tau,
$$
since $2\tht(t) \to 2\tau \in (0,\pi]$. Combining the next estimates for $t$ 
large enough  
$$
\lgt(\by_1([0,t])) = y(t) + o(y(t)) \;\; {\rm and} \;\; 
\lgt(\by_2([0,t])) = t + o(t).
$$
with the estimate \eqref{eq:inner-infty}, we deduce, after 
connecting $\by_1(0)$ to $\by_2(0)$ with a definable path, the following
$$
d_\inn^Y(\by_1(t),\by_2(t)) \; \leq \; 
\frac{4}{\sin\tau} \cdot e(t).
$$

\noindent
$\bullet$ \em Assume $\bu_1 =\bu_2$ and $\lbd > 1$. \em

\smallskip
Let $0 <\mu< 1$ such that, $y(t) \leq \mu t$ for large $t$. We observe that 
for large $t$ the following 
estimates are satisfied
$$
(1-\mu) t \; \leq \; e(t) := |\by_1(t) - \by_2(t)| \; \leq \;
(1+\mu) t. 
$$
Since $y,id_\R$ are $C^1$ and strictly increasing as $t\to \infty$, let 
$$
\bz_2 (t) := \by_2 (y(t))), 
$$
so that $|\bz_2| = y$. Since $\bz_2 = y(\bu_1 + o(1))$, we deduce that
$$
e_1:= |\by_1 - \bz_2| = o(y).
$$
Since $Y$ is connected, $Y_\lR := Y \cap \bB_R^q$ is connected for $R\geq R_0$, with $R_0$ large 
enough.
By \cite[Corollary 1.3]{KuPa} applied to the family $(Y_\lR)_R$, 
there exists a positive constant $C>0$ such 
that for each $R\geq R_0$ there exists a definable arc 
$\tht_R : [0,1] \to Y_{\leq y(R)}$ connecting $\by_1(R)$ and 
$\bz_2(R)$, such that
$$
\lgt(\tht_R) \; \leq C y(R).
$$
Therefore whenever $y (t) \geq R_0$, we deduce
$$
\lgt(\by_2 [y(t),t]) \; \leq  \; 2(t - y (t)).
$$
For $R$ such that $y(R) \geq R_0$, let $\gm_R$ be a rectifiable path
going from $\by_1(R)$ to $\bz_2(R)$ along $\tht_R$, and reaching
$\by_2(R)$ following $\by_2$. We deduce
$$
\lgt (\gm_R) \; \leq \; C y(R) + 2(R - y(R)) \; \leq \; 
\frac{C+2}{1-\mu} e(R).
$$

\medskip
The case (i) proceeds from similar arguments but as $t$ goes
to $0$. The case $\bu_1 \neq \bu_2$ is dealt with going through $\bx_0$ 
along $\by_1$, $\by_2$ an using the law of cosines. The first case
also connects $\by_1(0)$ to $\by_2(0)$ following $\by_1$ to $\bx_0$ and
then following $\by_2$. Estimates very similar to the corresponding case at 
$\infty$ give the result.
\end{proof}
We will need the following notions.
\begin{definition}\label{def:asmptotic-set}
Let $S$ be a subset of $\Rq$.   

(i) Let $\bx_0$ be a point in the closure $\clos(S)$ of $S$ taken in $\Rq$. 
The \em asymptotic set $S_{\bx_0} S$ of the subset $S$ at $\bx_0$ \em 
is defined as
$$
S_{\bx_0} S := \left\{\bu\in\bS^{q-1} \, : \, \exists (\bx_n)_n \subset 
S\setminus \bx_0 \;\; 
{\rm with} \;\; \bx_n \to \bx_0 \;\; {\rm and} \;\; 
\frac{\bx_n-\bx_0}{|\bx_n-\bx_0|} \to \bu\right\}.
$$

(ii) The \em asymptotic set $S^\infty$ of the subset $S$ at $\infty$ \em 
is defined as
$$
S^\infty := \left\{\bu\in\bS^{q-1} \, : \, \exists (\bx_n)_n \subset S \;\; 
{\rm with} \;\; |\bx_n| \to \infty \;\; {\rm and} \;\; 
\frac{\bx_n}{|\bx_n|} \to \bu\right\}.
$$  
\end{definition}

\medskip
Given a subset $S$ of $\Rq$ and $t>0$, we define the
following:
$$
S_{\leq t} := S \cap \bB_t^q, \;\; S_t := S \cap \bS_t^{q-1} \;\; {\rm and}
\;\; S_{\geq t} := S \setminus B_t^q .
$$

\medskip
Let $Y$ be a closed definable subset of $\Rq$. Let $\bx_0$ be a point of 
$Y$. Then $S_{\bx_0}Y$ is a definable set of dimension $\dim(Y,\bx_0)-1$ or
lower. In particular the non-negative cone $\wh{S_{\bx_0}Y}^+$ 
is the tangent cone of $Y$ at $\bx_0$. Similarly, $Y^\infty$ is definable and 
of dimension $\dim(Y,\infty)-1$ or lower. Often the non-negative cone
$\wh{Y^\infty}^+$ is called the tangent cone of $Y$ at infinity.
Note that
$$
S_{\bx_0} Y := \lim_{r\to\infty} \frac{1}{r} \left( Y \cap \bS^{q-1}
(\bx_0,r) \right) \;\; {\rm and} \;\; Y^\infty := \lim_{R\to\infty} 
\frac{1}{R} Y_R.
$$
where each limit is taken in the Hausdorff sense in $\bS^{q-1}$. 
We also get
$$
S_{\bx_0} Y = S_{\bx_0} (Y \cap \bB^q(\bx_0,r)) \;\; {\rm and} \;\; 
Y^\infty = (Y_\gR)^\infty.
$$
The conical structure of $Y$ at $\bx_0$ following \cite{vdDMi,vdD}
states that there exist a small radius $r_0 = r_0(\bx_0)$ and a 
homeomorphism $h :Y \cap \bB^q(\bx_0,r_0) \setminus \bx_0 \to (Y \cap 
\bS^{q-1}(\bx_0,r_0)) \times ]0,r_0]$
such that
$$
0 < r \leq r_0 \; \Longrightarrow \; h(Y\cap 
\bS^{q-1}(\bx_0,r)) = Y \cap 
\bS^{q-1}(\bx_0,r_0) \times r.
$$ 
Let $Y_1,\ldots,Y_c,$ be the closures of the connected components 
of $Y\cap \bB^q(\bx_0,r_0)\setminus \bx_0$. Each $Y_i$ is a closed connected 
definable subset containing $\bx_0$. We further find 
$$
S_{\bx_0} Y := \cup_{i=1}^c S_{\bx_0} Y_i.
$$
Similarly, the conical structure at infinity of $Y$ implies the existence 
of a large radius $R_0$ and of a homeomorphism $H :Y_{\geq R_0} \to Y_{R_0} 
\times [R_0,\infty)$ preserving the family of links $(Y_R)_{R\geq R_0}$, 
i.e. 
$$
R\geq R_0 \; \Longrightarrow \; H(Y_R) = Y_{R_0} \times R.
$$
Let $\cC_R^1,\ldots,
\cC_R^c$ be the connected components of $Y_\gR$. Thus
$$
Y^\infty := \cup_{i=1}^c Y_i^\infty \;\; {\rm where} \;\;
Y_i^\infty := (\cC_R^i)^\infty.
$$
Complementing Lemma \ref{lem:asympt-u1-u2}, we also find the following
useful
\begin{lemma}\label{lem:asympt-not-LNE}
Let $Y$ be a closed definable subset of $\Rq$, and let $\bx_0 \in Y$. 

(i) Let $\bx_0$ be a point of $Y$ such that the germ $(Y \setminus \bx_0,
\bx_0)$ has $c\geq 2$ connected components. Assume there exists $\bu \in S_{\bx_0}Y$
such that $\bu \in S_{\bx_0}Y_i$ for $i=1,2$. Then for each $r\leq r_0(\bx_0)$ 
there exist $C^1$ definable arcs $\by_i :[0,r] \to (Y_i)_\lr$ such that
$|\by_i(t) - \bx_0| = t$ for $i=1,2$ and 
$$
\lim_{t\to 0} \frac{d_\inn^Y(\by_1(t),\by_2(t))}{|\by_1(t) - \by_2(t)|} = \infty.
$$

(ii) Assume that the germ $(Y,\infty)$ has $c\geq 2$ connected components. 
Assume there exists $\bu \in Y^\infty$ such that $\bu \in Y_i^\infty$ for $i=1,2$. Then for
each $R\geq R_0$ there exist $C^1$ definable arcs $\by_i :[R,\infty) \to 
\cC_R^i$ such that $|\by_i(t)| = t$ for $i=1,2$ and 
$$
\lim_{t\to \infty} \frac{d_\inn^Y(\by_1(t),\by_2(t))}{|\by_1(t) - \by_2(t)|}
= \infty.
$$
\end{lemma}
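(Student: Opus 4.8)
The plan is to construct the two arcs explicitly using the asymptotic direction $\bu$ shared by the two components, and then to show that any path inside $Y$ joining them must "pay" a definite cost bounded below by a positive constant, whereas the outer distance $|\by_1(t)-\by_2(t)|$ tends to $0$ (in case (i)) or grows strictly slower than $t$ (in case (ii)). I will treat case (i) in detail; case (ii) follows by the now-familiar substitution of the behaviour at $\infty$ for the behaviour at $\bbo$, using \eqref{eq:arc-infty}, \eqref{eq:arc-infty-derivative} and \eqref{eq:inner-infty} in place of their local counterparts. First I would produce the arcs: since $\bu\in S_{\bx_0}Y_i$, there is a sequence in $Y_i\setminus\bx_0$ converging to $\bx_0$ with secant directions tending to $\bu$; by the curve selection lemma (applicable in the o-minimal setting) one extracts a definable arc in $Y_i$ through $\bx_0$ with tangent direction $\bu$, and after reparametrising by arclength from $\bx_0$ — legitimate by Equation~\eqref{eq:arc} and the fact that $g(t)=|\gm(t)-\bx_0|$ is definable, $C^1$ and strictly increasing near $0$ — one obtains $\by_i:[0,r]\to (Y_i)_{\leq r}$ with $|\by_i(t)-\bx_0|=t$ and $\by_i(t)-\bx_0 = t(\bu+o(1))$.

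The outer distance is then immediate: $|\by_1(t)-\by_2(t)| = |t(\bu+o(1)) - t(\bu+o(1))| = o(t)$, so in particular it goes to $0$ and, more importantly, it is $o(t)$. The core of the argument is the lower bound on $d_\inn^Y(\by_1(t),\by_2(t))$. Any rectifiable path $\gm$ in $Y$ from $\by_1(t)$ to $\by_2(t)$ must leave the component $Y_1$ of $Y\cap\bB^q(\bx_0,r_0)\setminus\bx_0$ that contains $\by_1(t)$ and enter the component $Y_2$ containing $\by_2(t)$ (for $t$ small these really are different components, $c\geq 2$). Using the conical structure homeomorphism $h$ at $\bx_0$, distinct components of the punctured ball correspond to distinct components of the link $Y\cap\bS^{q-1}(\bx_0,r_0)$; hence the only way a connected path can pass from $Y_1$ to $Y_2$ is to go through $\bx_0$ itself, i.e.\ $\gm$ must contain $\bx_0$ in its closure, or equivalently $\inf_{\gm}|\cdot - \bx_0| = 0$. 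Consequently $\gm$ contains a subpath inside $Y_1$ from $\by_1(t)$ (at distance $t$ from $\bx_0$) down to points arbitrarily close to $\bx_0$, and a subpath inside $Y_2$ back out to $\by_2(t)$; by Lemma~\ref{lem:inner-outer-arc}(i) applied to $Y_i$, each such portion has length at least $(1-o(1))\,t$. Therefore $\lgt(\gm) \geq (2-o(1))\,t$, so $d_\inn^Y(\by_1(t),\by_2(t)) \geq (2-o(1))\,t$, while $|\by_1(t)-\by_2(t)| = o(t)$; dividing gives the claimed limit $\infty$. (Taking the arcs to converge to $\bx_0$ from a single component if necessary, or choosing the arcs so that $\by_i$ stays in $Y_i$, makes the "must pass through $\bx_0$" step airtight.)

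The main obstacle is the topological step asserting that a connected path in $Y$ joining two different components of $Y\cap\bB^q(\bx_0,r_0)\setminus\bx_0$ is forced through $\bx_0$ — one must be careful that the path is allowed to leave the ball $\bB^q(\bx_0,r_0)$, wander through the rest of $Y$, and re-enter. To handle this cleanly I would not argue inside the ball alone but observe that $\by_1(t)$ and $\by_2(t)$ lie in the closed sets $Y_1$ and $Y_2$ respectively, whose intersection is $\{\bx_0\}$ (or is contained in the set of points at distance $\geq r_0$ together with $\bx_0$, handled by shrinking $t$); the estimate of Lemma~\ref{lem:inner-outer-arc}(i) is local near $\bx_0$, so it suffices to control the first exit of $\gm$ from $\bB^q(\bx_0,\rho)$ for a fixed small $\rho$ and note that between the last visit to $\bS^{q-1}(\bx_0,t)$-level inside $Y_1$ and the first such inside $Y_2$ the path accumulates length $\geq (2-o(1))t$ regardless of its global excursion, because it must first descend to near $\bx_0$ within $Y_1$ and then ascend within $Y_2$. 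Once this is set up, case (ii) is obtained verbatim with $t\to\infty$, $|\by_i(t)|=t$, the conical structure at infinity and its homeomorphism $H$, and Lemma~\ref{lem:inner-outer-arc}(ii); there the outer distance is again $o(t)$ while the inner distance is $\geq(2-o(1))t$.
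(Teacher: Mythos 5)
Your proposal is correct and follows essentially the same route as the paper: you take definable arcs in the two components sharing the direction $\bu$, parametrized so that $|\by_i(t)-\bx_0|=t$ (resp.\ $|\by_i(t)|=t$), so the outer distance is $o(t)$, while any rectifiable path in $Y$ joining them must reach $\bx_0$ or leave $\bB^q(\bx_0,r_0)$ (resp.\ reach the sphere of radius $R$), forcing its length to be at least $2t$ (resp.\ $2t-2R$), exactly as in the paper's proof. Only a cosmetic remark: invoking Lemma~\ref{lem:inner-outer-arc} for the lower bound is superfluous and not strictly applicable (the competitor path need not be definable, and that lemma is an asymptotic equality for definable arcs); the trivial inequality ``length $\geq$ Euclidean distance'' already gives the required bound, which is what the paper uses.
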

\begin{proof}
We start with point (ii).
Since $\bu$ is a 
point of $Y_1^\infty \cap Y_2^\infty$, for $i=1,2$, there exist 
definable paths
$$
\by_i : [R,\infty) \to \cC_R^i, \;\; {\rm with} \;\; \lim_{t\to\infty}
\frac{\by_i(t)}{|\by_i(t)|} = \bu.
$$
We can further assume that $\by_i (t) = t (\bu + o(1))$ with 
$|\by_i(t)| = t$ for $i=1,2$.
Thus the function $t\to e(t) = |\by_1(t) - \by_2(t)|$ is such that
$$
e(t) = o(t) \;\; {\rm as} \;\; t \to \infty.
$$
For $t\geq R$, let $\gm_t:[0,1] \to Y$ be any rectifiable path connecting 
$\by_1(t)$ to $\by_2(t)$. Since $\cC_R^1\cap \cC_R^2$ is empty,
we obtain the next estimate  
$$
\lgt(\gm_t)\geq |\by_1(t) - R| + |\by_2(t) - R| = 2t - 2R, 
$$
from which we deduce the following one 
$$
\lim_{t\to\infty} \frac{d_\inn^Y(\by_1(t),\by_2(t))}{e(t)} = \infty.
$$

Point (i) is proved along the very same lines: Let $r\leq r_0 = r_0(\bx_0)$
so that $(Y_1)_\lr \cap (Y_2)_\lr = \{\bx_0\}$. 
Since $\bu$ lies in $S_{\bx_0} Y_1\cap S_{\bx_0 }Y_2$, for $i=1,2$,
there exist definable paths
$$
\by_i : [0,r] \to (Y_i)_\lr, \; t\mapsto \by_i(t) = \bx_0 + t\bu + o(t).
$$
We deduce that the function $t\mapsto e(t) = |\by_1(t) - \by_2(t)|$ 
satisfies the following estimate
$$
e(t) = o(t), \;\; {\rm as} \;\; t\to 0.
$$
For $t\leq r$, let $\gm_t:[0,1] \to Y$ be any rectifiable path connecting 
$\by_1(t)$ to $\by_2(t)$. We can assume that $2 r \leq  r_0$.
Since $(Y_1)_{\leq r_0} \cap (Y_2)_{\leq r_0}$ reduces 
to $\{\bx_0\}$, and since
$|\by_i(t)-\bx_0| = t$ once $t$ is 
close enough to $0$, we obtain the estimate  
$$
\lgt(\gm_t)\geq |\by_1(t) - \bx_0| + |\by_2(t) - \bx_0| = 2t, 
$$
from which we deduce the announced one 
$$
\lim_{t\to 0} \frac{d_\inn^Y(\by_1(t),\by_2(t))}{e(t)} = \infty. 
$$
\end{proof}
%
%
%
%
%
%
%
%
%
%
%
%
%
%
%
%
%
%
%
%
%
%
%
%
%
%
%
%
%
%
%
%
%
%
%
%
%
%
%
\section{LNE representatives}\label{section:L-RD}
%
%

%
%

The contents of this section are inspired by the metric notions in the definable setting from \cite{Kur1,KuOr,KuPa}. 
First, we recall the  essential result \cite[Theorem 1.3]{KuPa} 
\begin{theorem}\label{thm:KuPa}
There exists positive a constant $M = M(q)$ such that any definable subset
$A$ of $\Rq\times\Rp$ can be partitioned into a definable and finite union
$A = \cup_{i\in I} \cB^i$, such that for each $\bt\in\Rp$, every
subset~$\cB_\bt^i$ is LNE with constant $M$.
\end{theorem}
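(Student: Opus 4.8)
The plan is to obtain this from the theory of $\Lambda$-regular cells (also called L-regular, or quasi-convex, cells) in o-minimal structures. In the non-parametric case the statement is essentially the existence of such a decomposition, so the content to organize is: (a) a $\Lambda$-regular cell in $\Rq$ is LNE with a constant depending only on $q$ once $\Lambda$ is fixed; and (b) the decomposition can be run with parameters so that the fibres $\cB^i_\bt\subset\Rq$ are themselves $\Lambda$-regular cells of $\Rq$ and the uniform constant survives. Recall that a $\Lambda$-regular cell is defined recursively: in $\R$ it is a point or an open interval; in $\R^{k+1}$ it is either the graph $\{(y,s):y\in C,\ s=f(y)\}$ of a definable $C^1$ function $f$ on a $\Lambda$-regular cell $C\subset\R^k$ with $\sup_C|\nabla f|\leq\Lambda$, or a band $\{(y,s):y\in C,\ f(y)<s<g(y)\}$ with $f<g$ definable $C^1$ on $C$ and each of gradient norm at most $\Lambda$ (allowing $f\equiv-\infty$ or $g\equiv+\infty$).

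For (a) I would induct on $q$, the case $q=1$ being immediate. Let $\cB\subset\R^{k+1}$ sit over the $\Lambda$-regular base $C\subset\R^k$, LNE with constant $M(k)$ by induction. If $\cB$ is the graph of $f$, the vertical projection $\cB\to C$ is $1$-Lipschitz for the outer metrics while its inverse is $\sqrt{1+\Lambda^2}$-Lipschitz for the inner metrics (because $f$ is $\Lambda$-Lipschitz along rectifiable curves in $C$); lifting a near-geodesic of $C$ joining the projections of two given points of $\cB$ gives $M(k+1)\leq\sqrt{1+\Lambda^2}\,M(k)$. If $\cB$ is a band, one joins $P=(y,s)$ to $P'=(y',s')$ by moving vertically in the fibre over $y$ to a height that stays off $f(y)$ and $g(y)$ by a controlled fraction of the thickness $g(y)-f(y)$, then following a near-geodesic of $C$ from $y$ to $y'$ while adjusting the height so as to remain strictly between $f$ and $g$ — possible precisely because $f,g$ vary by at most $\Lambda$ times the length travelled — and finally moving vertically to $P'$; the product sub-cases $C\times\R$ and $C\times(0,\infty)$, where LNE of a product of LNE sets gives the bound directly, dispatch the branches with infinite top or bottom. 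Bounding the vertical excursions against $|s-s'|\leq|P-P'|$ and $|y-y'|\leq|P-P'|$ again yields $M(k+1)\leq c(\Lambda)\,M(k)$, hence $M(q)\leq c(\Lambda)^{\,q-1}$ depends only on $q$ for fixed $\Lambda$.

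For (b) I would invoke the device underlying the $\Lambda$-regular decomposition: whenever a definable $C^1$ function has a slope exceeding $\Lambda$ in some coordinate direction one subdivides and exchanges that coordinate with the dependent variable, re-expressing the piece as a graph of slope $<\Lambda$; only finitely many pieces and finitely many such coordinate exchanges occur, so one may fix $\Lambda=1$. Running this on $A\subset\Rq\times\Rp$ with the parameters $\bt$ placed as the bottom $p$ coordinates and the coordinate exchanges confined to the $\bx$-block — equivalently, carrying out the decomposition compatibly with the projection $\Rq\times\Rp\to\Rp$ — yields a finite partition of $\Rq\times\Rp$ compatible with $A$; let the $\cB^i$ be the cells contained in $A$. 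For each $\bt\in\Rp$ the fibre $\cB^i_\bt\subset\Rq$ is then either empty or, up to a permutation of the $\Rq$-coordinates, a $1$-regular cell of $\Rq$: each graph/band step in the $\bx$-directions restricts, on fixing $\bt$, to a graph/band step for $f(\bt,\cdot)$ and $g(\bt,\cdot)$, whose $\bx$-gradients remain bounded by $1$. By (a) each such fibre is LNE with the constant $M:=M(q)$ depending only on $q$, and these $\cB^i$ are the desired pieces.

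The step I expect to be the main obstacle is the band case of (a): making the vertical–horizontal–vertical path yield a length bound whose constant depends only on $\Lambda$ and $q$, uniformly over arbitrarily thin bands and over bands with an infinite top or bottom, is where the work lies and is exactly where the gradient bound $\Lambda$ (rather than an uncontrolled Lipschitz constant) is used. Keeping the coordinate exchanges inside the $\bx$-block so the slices $\cB^i_\bt$ stay genuine $\Lambda$-regular cells, and checking that rephrasing the decomposition compatibly with the projection to $\Rp$ costs nothing, is routine but must be done carefully. Should one wish to prove the $\Lambda$-regular decomposition itself from scratch, that goes by induction on the number of peeled-off variables on top of o-minimal cell decomposition and uniform derivative estimates — longer, but standard.
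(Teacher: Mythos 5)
This statement is quoted in the paper from \cite{KuPa} (their Theorem 1.3) without proof, and your sketch reconstructs essentially the route of that cited source: a parametrized $L$-regular (quasi-convex) cell decomposition carried out compatibly with the projection to the parameter space, combined with the fact that $\Lambda$-regular cells of $\Rq$ are LNE with a constant depending only on $q$, proved by induction through the graph and band cases exactly as you outline. The one inaccuracy is the claim that finitely many coordinate exchanges allow $\Lambda=1$: permutations of the $\bx$-coordinates alone only give a gradient bound $\Lambda=\Lambda(q)$ (already the plane $z=M(x-y)$ has slope about $\sqrt{2}$ over its best coordinate pair), but since the final constant $M(q)$ is allowed to depend on $q$ this changes nothing in the argument.
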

%
%
%
%
%

%
%

\bigskip
Let $X$ be a closed  definable subset of $\Rq$. Assume that $X$ is connected, 
contains the origin and that the germ $(X,\infty)$ is connected. We define
the following closed definable subsets of $\Rq\times\R$
\begin{eqnarray*}
\cB(X) & := & \{(\bx,t) \in X \times [0,\infty) : |\bx|\leq t\}, \\
\cU(X) & := & \{(\bx,t) \in X \times [0,\infty) : |\bx|\geq t\}.
\end{eqnarray*}
Consider the projection $\rho:\Rq \to [0,+\infty)$, 
$(\bx,t) \mapsto t$. Therefore 
$$
\cB(X)_t = X_{\leq t}, \;\; 
{\rm and} \;\; 
\cU(X)_t = X_{\geq t}.
$$
The following result is a variation of \cite[Th\'eor\`eme]{KuOr} for 
definable families with parameters.
\begin{proposition}\label{prop:defin-metrics-param}
Let $Z \in \{\cB(X),
\cU(X)\}$. Assume that $Z_t$ is connected or empty 
for each $t>0$. There exist a positive constant $M$ and a definable
family $(\Dlt_t)_t$ of distances over $(Z_t)_t$ such that for each 
positive $t$ we have
$$
\Dlt_t \; \leq \; d_\inn^{Z_t} \; \leq \; (1 + M) \Dlt_t.
$$
\end{proposition}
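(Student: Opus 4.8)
The plan is to build $\Dlt_t$ as a ``controlled'' definable distance on each $Z_t$ by applying the Kurdyka--Parusi\'nski partition (Theorem~\ref{thm:KuPa}) to the family $Z\subset\Rq\times\R$ with the last coordinate $t$ playing the role of the parameter in $\Rp$ (here $p=1$). This gives a finite definable partition $Z=\cup_{i\in I}\cB^i$ and a universal constant $M=M(q+1)$ such that for every fixed $t$, each fibre $\cB^i_t$ is LNE in $\Rq$ with constant $M$, hence $d_{\inn}^{\cB^i_t}\le M\, d_{\cB^i_t}$.

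First I would fix $t>0$ with $Z_t\neq\emptyset$ and define $\Dlt_t$ on $Z_t$ as the ``path metric through the pieces'': $\Dlt_t(\bx,\bx')$ is the infimum, over all finite chains $\bx=\bz_0,\bz_1,\dots,\bz_k=\bx'$ in $Z_t$ with consecutive points lying in a common closed piece $\clos(\cB^{i_j}_t)$, of $\sum_{j} d_{\cB^{i_j}_t}(\bz_{j-1},\bz_j)$ — equivalently, the inner distance in $Z_t$ of the metric graph obtained by gluing the pieces along the outer (Euclidean) metric on each. One checks $\Dlt_t$ is a genuine distance on $Z_t$ (finiteness uses connectedness of $Z_t$ together with the fact that a finite definable partition has finitely many pieces and any two points can be joined by a chain — this follows from the curve selection / connectedness of definable sets), that it is symmetric and satisfies the triangle inequality by construction, and that the family $(\Dlt_t)_t$ is definable because the partition and the finitely many outer-distance functions are definable in the parameter $t$. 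The lower bound $\Dlt_t\le d_{\inn}^{Z_t}$ is immediate since any rectifiable path in $Z_t$ can be subdivided into subarcs each lying in one closed piece, and on each piece the outer distance is at most the length of that subarc.

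The upper bound is the heart of the matter. Given $\bx,\bx'\in Z_t$ realizing $\Dlt_t$ up to $\ve$ by a chain through pieces $\cB^{i_1}_t,\dots,\cB^{i_k}_t$, I want to replace each step $d_{\cB^{i_j}_t}(\bz_{j-1},\bz_j)$ by an \emph{inner} distance in $Z_t$ that is comparable. Here LNE of the fibre $\cB^{i_j}_t$ gives $d_{\inn}^{\cB^{i_j}_t}(\bz_{j-1},\bz_j)\le M\, d_{\cB^{i_j}_t}(\bz_{j-1},\bz_j)$, and since $\cB^{i_j}_t\subset Z_t$ we get $d_{\inn}^{Z_t}\le d_{\inn}^{\cB^{i_j}_t}$; summing along the chain and using the triangle inequality for $d_{\inn}^{Z_t}$ yields $d_{\inn}^{Z_t}(\bx,\bx')\le M\sum_j d_{\cB^{i_j}_t}(\bz_{j-1},\bz_j)\le M(\Dlt_t(\bx,\bx')+\ve)$. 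Letting $\ve\to 0$ gives $d_{\inn}^{Z_t}\le M\,\Dlt_t$, which is even better than the claimed $(1+M)\Dlt_t$; the slack is presumably needed to absorb a boundary/closure issue (the partition pieces $\cB^i$ need not be closed, so one works with their closures, and one must check that passing to $\clos(\cB^i_t)$ changes the LNE constant in a controlled way, or that a chain can always be taken through pieces whose closures meet $Z_t$ consistently). The main obstacle I anticipate is exactly this bookkeeping: ensuring the chain-distance $\Dlt_t$ is finite and well-behaved for \emph{every} $t>0$ simultaneously in a definable way when the combinatorics of which pieces are adjacent may jump with $t$; but since there are only finitely many pieces and finitely many possible adjacency patterns, one partitions the parameter interval $(0,\infty)$ into finitely many definable pieces on which the pattern is constant and argues uniformly on each, which also secures definability of $(\Dlt_t)_t$.
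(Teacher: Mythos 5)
Your overall route is close to the paper's: the paper likewise applies Theorem \ref{thm:KuPa} to $Z\subset\Rq\times\R$ with $t$ as the parameter, but it then simply reproduces the proof of Lemma~4 of \cite{KuOr} in this parameterized setting (taking $(1+M)=(1+C)^{q-1}$), whereas you build the comparison distance by hand as a chain metric through the closed fibres $\clos(\cB_t^i)$. That explicit construction is legitimate: the upper bound $d_\inn^{Z_t}\le M\,\Dlt_t$ is correct since $\clos(\cB_t^i)\subset Z_t$ (the fibre $Z_t$ is closed) and the closure of an $M$-LNE set is $M$-LNE with the same constant; and finiteness of $\Dlt_t$ on a connected fibre does hold, because the classes of the relation ``joinable by a chain'' are finite unions of closed pieces, hence closed, so connectedness of $Z_t$ forces a single class. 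In effect you make self-contained what the paper delegates to \cite{KuOr}, and you even get a better constant than the one quoted there.

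Two steps, however, need repair. First, your justification of $\Dlt_t\le d_\inn^{Z_t}$ — ``any rectifiable path in $Z_t$ can be subdivided into subarcs each lying in one closed piece'' — fails as stated: the infimum defining $d_\inn^{Z_t}$ runs over arbitrary rectifiable paths, which may oscillate between the finitely many pieces infinitely often, so no such finite subdivision exists in general (it does for definable paths, but reducing $d_\inn$ to definable paths is itself something to prove). The inequality is still true; a correct argument is to set $T:=\sup\{s:\ \Dlt_t(\bx,\gm(s))\le \lgt(\gm|_{[0,s]})\}$ and use pigeonhole on the finitely many pieces along sequences $s_n\to T$: infinitely many $\gm(s_n)$ lie in one piece, whose closure then contains $\gm(T)$, so $\Dlt_t(\gm(s_n),\gm(T))\le|\gm(s_n)-\gm(T)|$; this shows the supremum is attained and, if $T<1$, can be pushed past $T$, a contradiction. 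Second, definability of $(\Dlt_t)_t$ does not follow merely from definability of the pieces: an infimum over chains of unbounded length is not a priori a definable function of $(\bx,\bx',t)$, and your remark about adjacency patterns jumping with $t$ does not address this. What is needed (and suffices) is the observation that a chain never has to use the same piece twice — if it revisits $\clos(\cB_t^i)$, the intermediate links can be replaced by the single Euclidean step inside that piece without increasing the sum — so chains of length at most the number of pieces suffice, and the infimum over such bounded chains is definable. With these two points fixed, your proof is complete.
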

\begin{proof}
Theorem \ref{thm:KuPa} ensures that each $Z_t$ is 
a finite union of LNE sets $B_t^i$ with constant $C$.
Since each $(B_t^i)_t$, $i\in I$ is a definable family (the so-called $L$-cells), we reproduce the 
proof of \cite[Lemma 4]{KuOr} with $(1+M) = (1+C)^{q-1}$ for $Z_t$
and obtain that $(\Dlt_t)_t$ is a definable family by construction.
\end{proof}
\begin{remark}\label{rmk:inner-LNE}
Recall that $d_{Z_t} \leq \Dlt_t$. 
Since both families $(d_{Z_t})_t$ and $(\Dlt_t)_t$ are 
definable, the property of being LNE is definable for definable sets. In other words, by equivalence of metrics in Proposition~\ref{prop:defin-metrics-param}, we can
assume for the purpose of this paper that the family of inner distances 
$(d_\inn^{Z_t})_t$ is definable.
\end{remark}
The first interesting consequence of the Proposition~\ref{prop:defin-metrics-param} is the following
result about LNE representatives at $\infty$ and at $\bbo$.
\begin{corollary}\label{cor:repres-LNE-local}
(i) If $X$ is locally LNE at $\infty$, then there exist a positive radius 
$R_0$ and a positive constant $L$ such that $X_{\geq R}$ is LNE with LNE 
constant $L$ for radii $R \geq R_0$. 

\smallskip
(ii) If $X$ is locally LNE at $\bbo$, then there exist a positive radius 
$r_0$ and a positive constant $L$ such that $X_{\leq r}$ is LNE with
LNE constant $L$, for radii $0< r\leq r_0$.
\end{corollary}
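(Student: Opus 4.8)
The plan is to derive this corollary almost immediately from Proposition~\ref{prop:defin-metrics-param} together with the definability of the LNE property noted in Remark~\ref{rmk:inner-LNE}. I will prove part~(i); part~(ii) is entirely analogous with $\cB(X)$ in place of $\cU(X)$ and $r\to 0$ in place of $R\to\infty$. First I would apply Proposition~\ref{prop:defin-metrics-param} to $Z=\cU(X)$, so that $Z_t=X_{\geq t}$; note that the hypothesis that each $Z_t$ is connected or empty follows from the conical structure of $X$ at infinity once $t\geq R_0$, and for the purpose of LNE we may replace $X$ by $X_{\geq R_0}$ so that the family $(X_{\geq t})_{t}$ has all fibres connected (or empty). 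This gives a constant $M$ and a definable family of distances $(\Dlt_t)_t$ with $\Dlt_t \leq d_\inn^{X_{\geq t}} \leq (1+M)\Dlt_t$, and since $d_{X_{\geq t}} \leq \Dlt_t$, we may as well work with the definable family $(d_\inn^{X_{\geq t}})_t$ itself (Remark~\ref{rmk:inner-LNE}).

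Next I would translate ``$X$ locally LNE at $\infty$'' into a statement about this family. By Definition~\ref{def:LNE-infty} there is a neighbourhood $U$ of $\infty$ with $X\cap U$ LNE; such a $U$ contains some $X_{\geq R_1}$, and since a subset of an LNE set need not be LNE I cannot conclude directly, but $X_{\geq R_1}$ is a \emph{closed} subset obtained by intersecting the LNE set $X\cap U$ with the closed half-space complement of a ball — here I would instead argue that for $R$ large the inner metric of $X_{\geq R}$ is comparable to the inner metric of $X\cap U$ restricted to $X_{\geq R}$, using the conical structure of $X$ at infinity to see that any rectifiable path in $X\cap U$ joining two points of $X_{\geq R}$ can be pushed into $X_{\geq R'}$ for a slightly smaller $R'$ with controlled length increase. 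More cleanly: the property ``$X_{\geq R}$ is LNE with constant $L$'' is, for each fixed $L$, a definable condition on $R$ by Remark~\ref{rmk:inner-LNE} (it is a first-order statement over the definable families $(d_\inn^{X_{\geq R}})_R$ and $(d_{X_{\geq R}})_R$). Hence the set $\{R : X_{\geq R}\text{ is LNE}\}$ is definable, and by o-minimality either it is eventually empty or it contains a ray $[R_0,\infty)$; moreover on that ray the infimal LNE constant $L(R)$ is a definable function of $R$, so it is bounded near $\infty$, giving a uniform $L$.

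So the key steps, in order, are: (1) reduce to the case where $(X_{\geq t})_t$ is a definable family with connected fibres and definable inner-distance family, via Proposition~\ref{prop:defin-metrics-param} and Remark~\ref{rmk:inner-LNE}; (2) show that local LNE at $\infty$ forces $X_{\geq R}$ to be LNE for at least one large $R$, using the conical structure at infinity to pass from a neighbourhood of $\infty$ to a set of the form $X_{\geq R}$ with comparable inner metric; (3) invoke o-minimality of the definable set $\{R : X_{\geq R}\text{ LNE}\}$ and of the definable function $R\mapsto L(R)$ to extract a uniform radius $R_0$ and a uniform constant $L$.

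The main obstacle is step~(2): going from ``LNE on some neighbourhood of $\infty$'' to ``LNE on $X_{\geq R}$'' is not a formal set-theoretic triviality, because restricting an LNE set to a sub-level set can destroy the LNE property by cutting the short inner paths. The fix is geometric and relies on the conical structure of $X$ at infinity (and the curve estimates of Section~\ref{section:IODDA}): a near-minimal inner path between two points of $X_{\geq R}$ that wanders into $X\setminus X_{\geq R}$ can only do so over a bounded region, and using the product structure $H: X_{\geq R_0}\simeq X_{R_0}\times[R_0,\infty)$ one retracts such excursions back into $X_{\geq R'}$ for a slightly smaller $R'$, at the cost of a bounded multiplicative constant in length; letting $R$ grow makes this constant tend to the LNE constant of $X\cap U$. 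Once this comparability is in hand, the rest is a routine application of o-minimality.
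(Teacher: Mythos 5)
Your step (2) is where all the real work lies, and the fix you propose does not hold up. The conical structure at infinity gives only a homeomorphism $H : X_{\geq R_0} \to X_{R_0}\times[R_0,\infty)$ preserving the links; it carries no Lipschitz or length control whatsoever, so ``retracting excursions back into $X_{\geq R'}$ at the cost of a bounded multiplicative constant in length'' is precisely the kind of statement it cannot deliver. Moreover, the claim that a near-minimal path between two points of $X_{\geq R}$ can only leave $X_{\geq R}$ shallowly is false for arbitrary pairs: if both points have norm about $2R$ and mutual distance about $R$, a path of length $(L+1)|\bx-\bx'|$ may descend far below radius $R$. What the paper actually does is take a definable family of bad pairs $\aph(R),\beta(R)\in X_{\geq R}$ with $d_\inn^{X_{\geq R}}(\aph(R),\beta(R))/e(R)\to\infty$ and prove, using the arc expansions of Section \ref{section:IODDA}, the arguments of Lemma \ref{lem:asympt-u1-u2}, the Kurdyka--Parusi\'nski bound on links, and the LNE constant $L$ of $X\setminus K$, that such pairs are forced to satisfy $\ba=\bb$, $B=A+o(A)$, $e=o(A)$ and (Claim \ref{claim:A(R)=R}) $A(R)=R(1+o(1))$; only for such pairs does the shallow-excursion picture become true (the dip is at most $(L+1)e(R)$, cf. \eqref{eq:close-R}), and the contradiction is then produced by passing to the radius $S(R)=f^{-1}(R)$ with $f(R)=R-(L+1)e(R)$, joining $\aph(S(R))$ to $\beta(S(R))$ inside $X_{\geq R}$, and returning along $\aph,\beta$ using $|\aph'|,|\beta'|=1+o(1)$. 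None of this quantitative control appears in your sketch, and it cannot be replaced by the topological product structure.

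There is a second gap in step (3): you assert that the infimal LNE constant $L(R)$, being a definable function of $R$, ``is bounded near $\infty$''. Definable functions may perfectly well tend to $+\infty$ (e.g. $L(R)=R$); o-minimality only gives the dichotomy bounded versus divergent, and excluding divergence again requires the geometric contradiction argument --- that is exactly how the paper obtains the uniform constant in the last paragraph of its proof. So the two points where you lean on definability alone (ruling out failure of LNE for all large $R$, and uniformity of the constant) both depend on the geometric core that is missing from the proposal.
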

\begin{proof}
Let us show (i). 
There exists a compact subset $K$ of $\Rq$, such that $X\setminus K$ is
LNE.
Denote 
$$
d^R := d_\inn^{X_{\geq R}}.
$$
Following Remark \ref{rmk:inner-LNE} we assume that  
the family of distances $(d^R)_R$ is definable.
Assume that $X\setminus K$ contains $X_{\geq x}$ for some positive $x$.
Let 
$$
D(R) := \sup \left\{\frac{d^R(\bx,\bx')}{|\bx - \bx'|} \; : \; 
\bx,\bx'\in X_{\geq R}, \, \bx\neq \bx' \right\} .
$$
The function $R\to D(R)$ is definable with values in $[0,\infty]$. The
first part of the statement we are looking for is equivalent to claiming
that for each $R \geq R_0$, we must find $D(R) < \infty$. 

Assume the statement is not true, by definability of function 
$R\to D(R)$, we can choose $x$ such that $D(R) = \infty$ for $R\in 
I_x := [x,\infty)$. Therefore for any definable function $\rho:[x,\infty) \to 
[0,\infty)$ such that $\rho(R) \to  \infty$ as $R \to \infty$, there exist 
definable arcs
$$
\aph,\beta :[x,\infty) \to X_{\geq x}  \;\; {\rm with} \;\; 
\aph(R),\beta(R) \in X_{\geq R}, \;\; \forall R \geq x
$$
such that 
$$
d^R(\aph(R),\beta(R)) \geq \rho(R) \cdot e(R), \;\; {\rm where} \;\;
e(R) := |\aph(R) - \beta(R)|.
$$
Consider the definable functions 
$A := |\aph|$ and $B:=|\beta|$. Following \eqref{eq:arc-infty}, we
write as $R$ goes to $\infty$
$$
\aph(R) = A(R)[\ba + o(1)] \;\; {\rm and} \;\;
\beta(R) = B(R)[\bb + o(1)], \;\; {\rm where} \;\;
\ba,\bb \in \bS^{q-1}.
$$
We can further assume that over $I_y=[y,+\infty)$ with $x\leq y$, the arcs 
$\aph,\beta$ are $C^1$, the functions $A:=|\aph|$, $B:=|\beta|$ and $\rho$ 
are $C^1$, strictly increasing, and with positive derivatives, and also such 
that $B\geq A$. Since $A:I_y \to I_{A(y)}$ 
and $B:I_y \to I_{B(y)}$ are $C^1$ definable diffeomorphisms, we deduce
$$
|\aph(A^{-1}(R))| =|\beta(B^{-1}(R))| =R.
$$ 
Following \cite[Corollary 1.3]{KuPa}, there exists a positive constant $C$
such that for every $R$ there exists a continuous definable path $\tht_R : 
[0,1] \to X_R$ such that $\tht_R(0) = \aph(A^{-1}(R))$, $\tht_R(1) = 
\beta(B^{-1}(R))$ 
and 
$$
\lgt(\tht_R) \; \leq \; C\cdot {\rm diam} (X_R),
$$
where the diameter is taken for the outer metric. 
Consider the rectifiable path $\gm_R$ with values in $X_\gR$, connecting 
$\aph(R)$ to $\aph(A^{-1}(R))$ along $\aph$, then reaching $\beta(B^{-1}(R))$ 
following 
$\tht_R$ and ending at $\beta(R)$ along $\beta$. The arguments used in the
proof of point (ii) of Lemma \ref{lem:asympt-u1-u2} would show the existence
of a constant $F$ such that $\lgt(\gm_R) \leq F\cdot e(R)$ for large radii 
$R$. Thus the following must hold true 
$$
\ba =\bb \;\; {\rm and} \;\; B = A + o(A).
$$
Therefore as $R\to\infty$, we deduce that
$$
e(R) = o(A(R)).
$$
\begin{claim}\label{claim:A(R)=R}
$A(R) = R(1 + o(1))$ for $R\gg 1$.
\end{claim}
\begin{proof}
Assume there exists $\lbd > 1$ such that $A(R) \geq \lbd R$ for $R$ large 
enough. Let $\gm_R :[0,1] \to X\setminus K$ 
be a rectifiable arc starting at $\aph(R)$ and ending at $\beta(R)$ such that
for $R$ large enough the following estimates hold
$$
\lgt(\gm_R) \leq (L+1) e(R) \ll \frac{\lbd-1}{2\lbd} A(R),
$$
implying that the arc $\gm_R$ is contained in $X_\gR$, which is 
prohibited.
\end{proof}
By hypothesis we cannot connect $\aph(R)$ and $\beta(R)$ within $X_{\geq R}$
to minimize their inner distance, 
therefore the following estimates are satisfied for large $R$
\begin{equation}\label{eq:close-R}
R \; < \;  A(R) \; \leq  \; B(R) \; < R + \rd_\inn^{X\setminus K}
(\aph(R),\beta(R)) \; \leq \; R + (L+1)e(R).
\end{equation}
In particular, $\aph(R)$ and $\beta(R)$ can be connected by a rectifiable 
curve contained in $X_{> R- (L+1)e(R)}$.
Consider the following definable function
$$
f: [R_0,\infty) \to \R, \;\; R\mapsto R - (L+1) e(R).
$$
Observe that $f (R)= R + o(R)$ for large $R$. Let $S(R) := f^{-1}(R)$.
Since $f$ is a diffeomorphism for large $R$, for any positive $\eta$, there 
exists $R_\eta \geq R_0$ such that the following estimate holds true 
over $[R_\eta,\infty)$
$$
|S(R) - (R+(L+1)e(R))|\leq \eta \cdot e(R)
$$
Since $S(R) - (L+1) e(S(R)) = R$, we deduce from Equation \eqref{eq:close-R} 
that we can connect $\aph(S(R))$ to $\beta(S(R))$ within $X_\gR$.
Following Claim \ref{claim:A(R)=R} and Estimate 
\eqref{eq:arc-infty-derivative}, for large $R$ we find that
$$
|\aph'|, |\beta'| = 1 + o(1).
$$
We can further assume that the functions $|\aph'|, |\beta'|$ are bounded by 
$1+ \eta$ over $[R_\eta,\infty)$. Thus 
$$
\lgt (\bc([R,S(R)]))  \leq  \int_R^{R + (L+1+\eta) e(R)} |\bc'| \leq (1+\eta) 
(L+1+\eta) e(R), \;\; 
{\rm for} \;\; \bc = \aph,\beta.
$$
Let $\dlt_R$ be a rectifiable curve connecting $\aph(S(R))$ to 
$\beta (S(R))$ contained in $X_\gR$ and such that
$$
\lgt(\dlt_R) \; \leq \; (L+1)e(S(R)).
$$
Let $\gm_R$ be the rectifiable curve starting at $\aph(R)$ going to
$\aph (S(R))$ along $\aph$, then reaching $\beta (S(R))$ following $\dlt_R$ 
and finally ending at $\beta(R)$ along $\beta$. Thus $\gm_R$ is contained 
on $X_\gR$. 
\begin{figure}[h]
	\centering
	\includegraphics[width=0.45\textwidth]{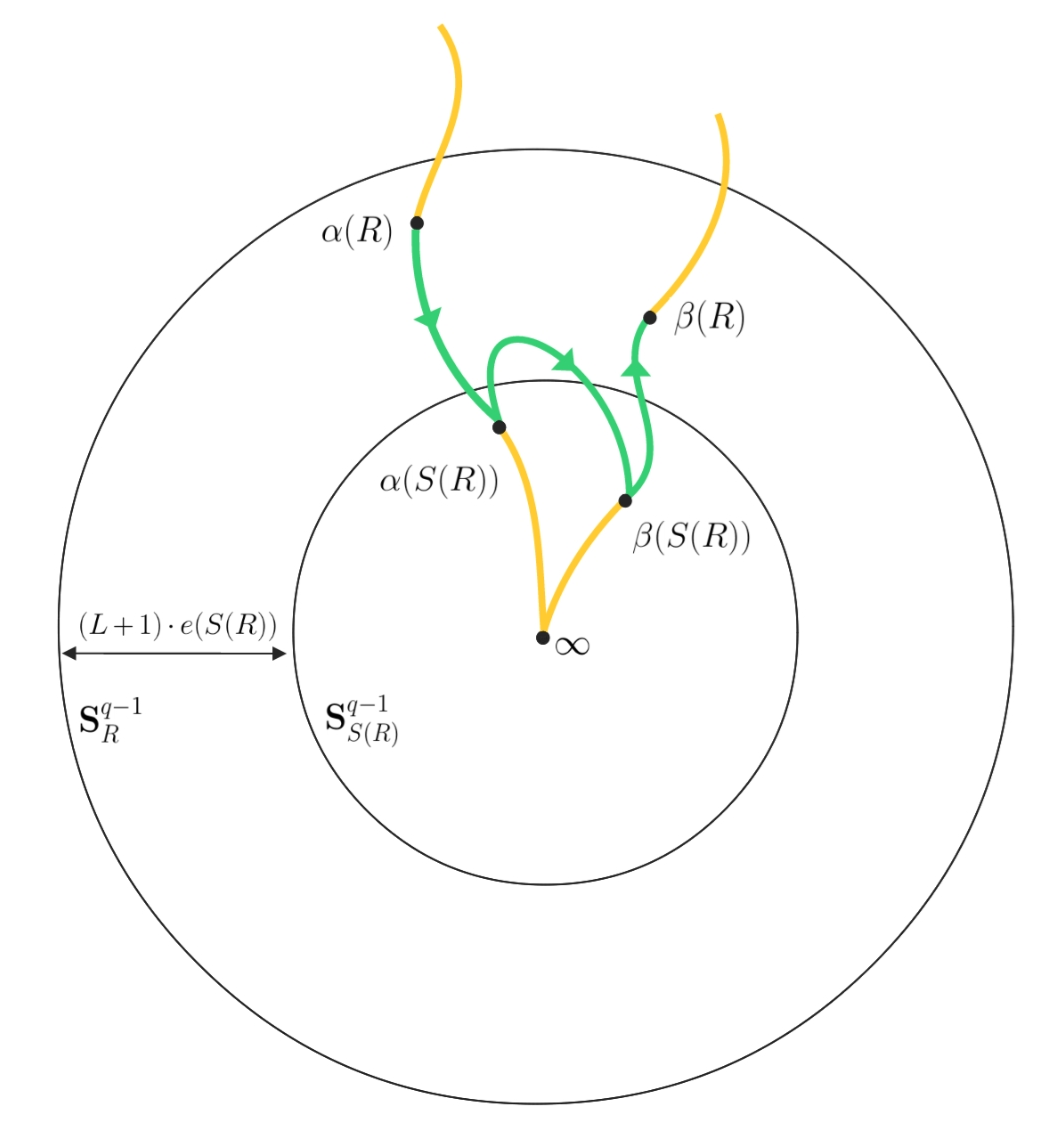}
	\caption{The green path $\gm_R$ is contained in $X_\gR$, and of length 
comparable to $|\aph(R) - \beta(R)|$.}
\end{figure}
%
%

\smallskip\noindent
Thus we deduce the following estimates
$$
\lgt(\gm_R) \; \leq \; (1+\eta)(L+1+\eta)e(R) + (L+1)e(S(R)) + 
(1+\eta)(L+1+\eta)e(R)
$$
for $R\geq R_\eta$ large enough. But the next estimate
$$
e(S(R)) = e(R) (1+o(1)) \;\; {\rm as} \;\; R \to \infty,
$$ 
produces a contradiction to the non-LNE hypothesis of $X_\gR$.

\medskip
We are left to show that we can find a LNE constant $M$ for 
each $X_\gR$ with $R\geq R_0$. For each $R$, let $M(R)$ be the infimum of the 
LNE constant of $X_\gR$. Since we can suppose that the family of distances
$(d^R)_R$ is definable, the function $R\to M(R)$ is non-decreasing, 
definable over $[R_0,\infty)$, thus continuous 
if $R_0$ is large enough. Either it is bounded or goes to $\infty$ as $R$ 
goes to $\infty$. But the previous part of the proof shows that it must be 
bounded.

\medskip\noindent
To obtain (ii), we will first assume that $X_{\lr}$ is not LNE for each 
positive radius $r\leq r_0$. The family of distances $(d_\inn^{X\lr})_r$
can be assumed definable. Thus there are two definable 
arcs $\aph,\beta :[0,r_0] \to X_{\leq r_0}$ such that
$|\aph(r)|,|\beta(r)| \leq r$ and such that
$$
\lim_{r\to 0}\frac{d_\inn^{X_\lr}(\aph(r),\beta(r))}{e(r)} = \infty \;\; 
{\rm where} \;\; 
e(r) = |\aph(r) - \beta(r)|. 
$$
From the arguments of point (i) of Lemma \ref{lem:asympt-u1-u2}, we 
deduce that $\bc(r) = r \bu + o(r)$, where $\bc =\aph,\beta$.
From here on, it is 
enough to follow the proof of case (i) with straightforward adaptations.
\end{proof}
\begin{remark}\label{rmk:repres-radius-like}
Instead of working with the 
family $(X_\lr)_{r\leq r_0}$ of LNE representatives
of $(X,\bbo)$, we could use definable radius like functions $\rho : 
(\Rq,\bbo) \to [0,\infty)$, as in \cite{Ngu,Val2}, to obtain 
that 
$$
X \cap\{|\bx|\leq \rho(r)\}$$
is LNE with LNE constant independent of $r \leq r_0^\rho$, 
since the proofs will be the same, substituting $\rho(r)$ for $r$.  

Definable radius-like functions $\rho : (\Rq,\infty)
\to [1,\infty)$ near infinity can be defined analogously to \cite{Ngu,Val2}, and we could
show as well that the definable family of representatives of the germ of $X$ at infinity
$$
X \cap\{|\bx|\geq \rho(R)\}
$$
consists of LNE sets with LNE constant independent of $R \geq R_0^\rho$.
\end{remark}
%
%
%
%
%
%
%
%
%
%
%
%
%
%
%
%
%
%
%
%
%
%
%
%
%
%
%
%
%
%
%
%
%
%
%
%
%
%
%
%
%
%
%
%

\section{Complements of representatives}\label{section:LNEaI}
The results developed here generalize those of our paper 
\cite[Sections 6,7]{CoGrMi1}.
The results a posteriori are obvious consequences of the main result, but they are needed beforehand for its proof.

\medskip
Let $X$ be a closed, definable and unbounded subset of $\Rq$. 
Let $R_0$ be a large enough radius so that the conical structure at infinity 
applies to $X_{\gR_0}$. Thus the $c\geq 1$ connected components of $X_\gR$ are connected cylinders $\cC_R^1,\ldots, \cC_R^c$, whenever $R\geq R_0$. Denote
$$
\cC^{i,\infty} := (\cC_R^i)^\infty.
$$
If $X_1^\infty,\ldots,X_s^\infty$ are connected components of $X^\infty$, then
 $s=c$ and  
 there exists a permutation $\ve$ of 
$\{1,\ldots,c\}$ such that $\cC^{i,\infty} = X_{\ve(i)}^\infty$ for 
each $i=1,\ldots,c$ (follows in particular from point (ii) of Lemma \ref{lem:asympt-not-LNE}).
\begin{lemma}\label{lem:Ci-LNE}
Let $X$ be a closed, connected definable and unbounded subset of $\Rq$. 
If $X$ is LNE, then each $\cC_R^i$ is locally LNE at $\infty$.
\end{lemma}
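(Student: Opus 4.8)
The goal is to show that each cylinder $\cC_R^i$ is locally LNE at $\infty$ under the hypothesis that $X$ is LNE. By Lemma \ref{lem:compact-LNE} adapted to the link picture (or equivalently by directly chasing definitions), it suffices to produce, for some large radius $R$, a uniform LNE constant for $\cC_R^i$; and since $\cC_R^i$ is closed, connected, definable, and unbounded, local LNE at $\infty$ reduces to controlling the inner distance of pairs of points that both go to $\infty$ inside $\cC_R^i$. So the plan is to take a bad pair of definable arcs $\by_1, \by_2 : [R,\infty) \to \cC_R^i$ witnessing failure of LNE (using the definability of the inner distance families, Remark \ref{rmk:inner-LNE}, and Corollary \ref{cor:repres-LNE-local} to pass to a uniform-in-$R$ formulation), normalize so that $|\by_2(t)| \geq |\by_1(t)|$, and derive a contradiction with the LNE property of the \emph{whole} set $X$.

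The key point is to compare the inner distance $d_\inn^{\cC_R^i}(\by_1(t),\by_2(t))$ inside the cylinder with the inner distance $d_\inn^X(\by_1(t),\by_2(t))$ inside all of $X$. The latter is bounded by $L \cdot e(t)$ where $e(t) = |\by_1(t)-\by_2(t)|$ and $L$ is an LNE constant of $X$. The idea is that an optimal (or near-optimal) path $\gm_t$ joining $\by_1(t)$ to $\by_2(t)$ in $X$, having length comparable to $e(t)$, cannot ``escape'' very far from the sphere $\bS^{q-1}_t$: by point (i) of Lemma \ref{lem:asympt-u1-u2} applied in reverse, or more directly by the estimate \eqref{eq:inner-infty}, the endpoints of $\gm_t$ lie within distance $e(t)$ of each other, and since $|\by_i(t)| \geq R$, the path $\gm_t$ stays in $X_{\geq R - L\,e(t)}$. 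Once $e(t) = o(t)$ — which follows from the standard trichotomy of Lemma \ref{lem:asympt-u1-u2}, since $\bu_1 = \bu_2$ and $\lbd = 1$ is forced (otherwise the $\limsup$ of the ratio would be finite, contradicting that the arcs witness failure of LNE) — the path $\gm_t$ is trapped in $X_{\geq R - o(t)}$, hence eventually in $\cC_R^i$ because the connected components $\cC_R^1,\ldots,\cC_R^c$ are separated and $\by_1(t),\by_2(t)$ both lie in $\cC_R^i$. Here one may need to slide the radius slightly, exactly as in the proof of Corollary \ref{cor:repres-LNE-local}, replacing $R$ by $S(R) = R(1+o(1))$ so that the near-optimal $X$-path joining $\by_1(S(R))$ to $\by_2(S(R))$ lands in $X_{\geq R}$.

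Assembling this, for large $t$ one gets a rectifiable path in $\cC_R^i$ joining $\by_1(t)$ to $\by_2(t)$ of length $\leq L' e(t)$ for some constant $L'$ depending only on $L$: travel along $\by_1$ from $\by_1(t)$ to $\by_1(S(t))$, then along the $X$-path (which lies in $X_{\geq R} \cap \cC_R^i$ since it is connected and meets $\cC_R^i$) to $\by_2(S(t))$, then back along $\by_2$ to $\by_2(t)$; the two ``detour'' segments along $\by_1, \by_2$ have length $O(e(t))$ by the derivative estimate \eqref{eq:arc-infty-derivative} since $S(t) - t = o(t)$ and in fact $=O(e(t))$, while $e(S(t)) = e(t)(1+o(1))$ as in Corollary \ref{cor:repres-LNE-local}. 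This contradicts $d_\inn^{\cC_R^i}(\by_1(t),\by_2(t))/e(t) \to \infty$. Therefore $\cC_R^i$ is LNE for $R$ large, i.e.\ locally LNE at $\infty$, and passing from $i$ arbitrary completes the proof.

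\textbf{Main obstacle.} The delicate part is the radius-sliding trick and the bookkeeping that shows the $X$-optimal path connecting the (reparametrized) arc points stays inside $X_{\geq R}$ and therefore inside the single component $\cC_R^i$ — this is where one genuinely uses that the connected cylinders of $X_{\geq R}$ are pairwise disjoint and that an LNE path of length $\ll t$ joining two points on $\bS^{q-1}_t$ cannot dip below the radius $R$. Everything else (the trichotomy forcing $\bu_1 = \bu_2, \lbd = 1$; the derivative estimates along $\by_i$; the definability of the inner-distance families) is by now routine given the lemmas already established in Sections \ref{section:IODDA} and \ref{section:L-RD}.
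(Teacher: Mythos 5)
Your proposal is correct and follows essentially the same route as the paper's proof: the trichotomy of Lemma \ref{lem:asympt-u1-u2}(ii) applied in the cylinder forces $\bu_1=\bu_2$ and $\lbd=1$, hence $e(t)=o(|\by_1(t)|)$, and then the LNE property of $X$ gives a connecting path of length $O(e(t))=o(|\by_1(t)|)$ which cannot leave $X_{\geq R}$ and therefore stays in the single component $\cC_R^i$, contradicting the blow-up of $d_\inn^{\cC_R^i}(\by_1(t),\by_2(t))/e(t)$. The only real deviation is that your radius-sliding detour via $S(t)$ (and the bound ``$X_{\geq R-Le(t)}$'') is unnecessary and slightly misstated: since the cylinder radius $R$ is fixed while $|\by_i(t)|\to\infty$, the near-optimal $X$-path already lies at norm at least $|\by_1(t)|-O(e(t))\to\infty$, so it lands in $X_{\geq R}$ without any reparametrization, which is exactly the shortcut the paper takes.
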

\begin{proof}
Assume that $\cC := \cC_R^1$ is not locally LNE at $\infty$. Let 
$$
d^X := d_\inn^X \;\; {\rm and} \;\; d^R := d_\inn^\cC. 
$$
By Proposition \ref{prop:defin-metrics-param}, we can assume that 
$d^X$ and $d^R$ are definable distances. Therefore there exist two definable 
arcs $\aph,\beta :[R,\infty)\to \cC$
such that  
$$
\lim_\infty|\aph| = \lim_\infty |\beta| = \lim_{t \to\infty} 
\frac{d^R(t)}{e(t)}  = \infty \;\; {\rm where} \;\; 
\left\{
\begin{array}{rcl}
d^R (t) &:= &d^R(\aph(t),\beta(t)) \\ 
e(t) & :=& |\aph(t) - \beta(t)|
\end{array}
\right.
$$
Let $A := |\aph|$ and $B := |\beta|$. We write again
$$
\aph(t) = A(t)(\ba + o(1)) \;\; {\rm and} \;\; \beta(t) = B(t)(\bb + 
o(1)).
$$
Point (ii) of Lemma \ref{lem:asympt-u1-u2} provides the following 
$$
\ba = \bb \;\; {\rm and} \;\; B = A(1+o(1)).
$$
Since we can assume that both $A$ and $B$ are strictly increasing to 
$\infty$ as $t$ goes to $\infty$, point (ii) of Lemma \ref{lem:asympt-u1-u2} 
implies the following 
$$
\ba = \bb, \;\; B = A(1+o(1)), \;\; {\rm and} \; {\rm thus}
\;\; e = o(A). 
$$
Since $X$ is LNE we find the following estimate
$$
\lim_{t\to\infty} \frac{d^X(\aph(t),\beta(t))}{A(t)} = 0.
$$
Therefore any rectifiable curve $\gm_t$ in $X$ connecting $\aph(t)$ and
$\beta(t)$ and such that
$$
\lgt(\gm_t) \leq 2 d^X(\aph(t),\beta(t))
$$
must be contained in $\cC$ once $t$ is large enough, contradicting 
the initial hypothesis that $\cC$ was not locally LNE at $\infty$.
\end{proof}
\begin{proposition}\label{prop:X-LNE-repr}
Let $X$ be a closed connected definable subset of $\Rq$. 

(i) If $X$ is LNE, then there exist a positive radius $R_0$ and a positive 
constant $L$ such that $X_\lR$ and each connected components of $X_\gR$ are 
LNE with LNE constant $L$ whenever $R\geq R_0$. 

(ii) Assume there exists $R_0$ such that $X_\lR$ is LNE, each connected
component $\cC_R^i$ of $X_\gR$, $i=1,\ldots,c,$ is LNE for $R\geq 
R_0$, and $(\cC_R^i)^\infty \cap (\cC_R^j)^\infty = \emptyset$ if $i\neq j$. 
Then $X$ is LNE
\end{proposition}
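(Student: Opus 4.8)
\textbf{Proof proposal for Proposition \ref{prop:X-LNE-repr}.}

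The plan is to treat the two parts by the same philosophy: \emph{local} LNE behaviour at $\infty$ and at $\bbo$ (plus connectedness of the pieces) forces \emph{uniform} LNE constants on all representatives, and conversely gluing uniformly LNE pieces whose asymptotic sets are pairwise disjoint yields a globally LNE set. Part (i) should follow almost immediately from the tools already in hand. Since $X$ is LNE it is in particular locally LNE at $\infty$ and locally LNE at $\bbo$, so Corollary \ref{cor:repres-LNE-local} gives a radius $R_0$ (resp.\ $r_0$) and a constant so that $X_{\geq R}$ is LNE with a fixed constant for $R\geq R_0$; enlarging $R_0$ so that the conical structure at infinity applies, each connected component $\cC_R^i$ of $X_{\geq R}$ is a subset of $X_{\geq R}$, hence LNE with the same constant by the trivial monotonicity ``subset of an LNE set with the same inner-distance comparison'' --- more carefully, one invokes Lemma \ref{lem:Ci-LNE} to know each $\cC_R^i$ is locally LNE at $\infty$ and then Corollary \ref{cor:repres-LNE-local}(i) applied to each $\cC_R^i$ to get a uniform constant $L_i$, taking $L$ the maximum over the finitely many $i$ together with the constant for $X_{\leq R}$ obtained from Corollary \ref{cor:repres-LNE-local}(ii) applied after compactifying. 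For $X_{\leq R}$ with $R\geq R_0$ large, one notes $X_{\leq R}=X\setminus B_R^q$ complement is not the point; instead $X_{\leq R}$ is a compact connected definable set which is locally LNE at each point (it inherits local LNE from $X$ at interior points, and at boundary points of the ball it is still locally LNE since locally it agrees with $X$), so Lemma \ref{lem:compact-LNE} makes it LNE, and a definability-of-the-LNE-constant argument as in the proof of Corollary \ref{cor:repres-LNE-local} makes the constant uniform in $R$.

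For part (ii) --- the converse and the genuinely substantive direction --- I would argue by contradiction. Suppose $X$ is not LNE; since (by Remark \ref{rmk:inner-LNE}) the family of inner distances is definable, the function measuring the worst ratio $d_\inn^X(\bx,\bx')/|\bx-\bx'|$ is definable, so by the curve selection / definable choice we obtain two definable arcs $\aph,\beta$ along which $d_\inn^X(\aph(t),\beta(t))/e(t)\to\infty$, with $e(t)=|\aph(t)-\beta(t)|$. The proof then splits according to where these arcs escape. If both $\aph$ and $\beta$ stay in a fixed compact set (along a subsequence parametrized definably), they converge to points $\bx_0,\bx_0'$ of $X$; if $\bx_0\neq\bx_0'$ then $e(t)$ is bounded below and $d_\inn^X$ is bounded above on a compact connected definable set (finite inner diameter), contradiction; if $\bx_0=\bx_0'$ then the whole configuration sits, for large $t$, inside $X_{\leq r_0}\cap B^q(\bx_0,\text{small})$, and the assumed LNE-ness of $X_{\leq R_0}$ (hence of $X_{\leq r_0}$ for smaller $r_0$ by Corollary \ref{cor:repres-LNE-local}(ii), which applies because $X_{\leq R_0}$ LNE implies locally LNE at $\bbo$, but one also needs local LNE at $\bx_0$ --- here one must additionally assume or derive local LNE of $X$ at each of its points, which for a closed definable set is automatic only after invoking the representative theory, so in fact the bounded sub-case uses that $X_{\leq R_0}$ LNE $\Rightarrow$ locally LNE everywhere in $B_{R_0}^q$, which is Lemma \ref{lem:compact-LNE} read backwards). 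The remaining, and main, case is when $\aph,\beta\to\infty$: set $A:=|\aph|$, $B:=|\beta|$, $A\leq B$ after reindexing; by Lemma \ref{lem:asympt-u1-u2}(ii) the only way the ratio can blow up is $\ba=\bb$ and $B=A(1+o(1))$, hence $e=o(A)$. Thus $\aph(t)$ and $\beta(t)$ have the same asymptotic direction $\bu$, so $\bu$ lies in $\cC^{i,\infty}$ for whatever component $\cC_R^i$ contains (for large $t$) both arcs --- and by the hypothesis $(\cC_R^i)^\infty\cap(\cC_R^j)^\infty=\emptyset$ for $i\neq j$, \emph{both} $\aph(t)$ and $\beta(t)$ must eventually lie in the \emph{same} component $\cC_R^i$. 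Now mimic the endgame of Corollary \ref{cor:repres-LNE-local}(i): using that $\cC_R^i$ is LNE with constant $L$ together with the $C^1$ estimates \eqref{eq:arc-infty} and \eqref{eq:arc-infty-derivative}, slide along $\aph$ and $\beta$ by the ``shift function'' trick (replace $R$ by $S(R)$ with $S(R)=R(1+o(1))$) so that $\aph(S(R))$ and $\beta(S(R))$ sit on the same sphere-level and can be joined inside $\cC_R^i$ by a curve of length $\leq L\,e(S(R))=L\,e(R)(1+o(1))$, then close up along $\aph,\beta$; this produces a curve in $X$ of length $O(e(R))$ joining $\aph(R)$ to $\beta(R)$, contradicting $d_\inn^X(\aph(R),\beta(R))/e(R)\to\infty$.

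\textbf{Main obstacle.} The delicate point is the ``escape to infinity'' case of part (ii), specifically the step that forces $\aph(t)$ and $\beta(t)$ into the \emph{same} connected component of $X_{\geq R}$. A priori they could wander between different components while $e(t)=o(A(t))$; the disjointness hypothesis $(\cC_R^i)^\infty\cap(\cC_R^j)^\infty=\emptyset$ is exactly what rules this out, but one has to use it quantitatively --- one needs that if two points on the sphere of radius $t$ in different components had distance $o(t)$, their limiting directions would coincide, contradicting disjointness of the asymptotic sets; here the Hausdorff-limit description $Y^\infty=\lim \frac{1}{R}Y_R$ and the fact that the asymptotic sets are definable (hence closed, and at positive Hausdorff distance when disjoint) is what makes it work. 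Once both arcs are trapped in one component, everything reduces to the already-proved Corollary \ref{cor:repres-LNE-local}(i) machinery applied to that component, so no new estimate is needed; the bookkeeping of the shift function $S(R)$ and the bounds on $|\aph'|,|\beta'|$ is routine and can be quoted essentially verbatim. A secondary, more cosmetic obstacle is making the ``subset of an LNE set is LNE'' and ``finitely many uniform constants combine'' assertions precise --- these are trivial but should be stated so that the uniform $L$ in part (i) is genuinely uniform over $R\geq R_0$, which is handled by the definability-of-$M(R)$ argument already used in Corollary \ref{cor:repres-LNE-local}.
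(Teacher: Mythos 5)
Your part (ii) is essentially sound and close in spirit to the paper's argument (same ingredients: Lemma \ref{lem:asympt-u1-u2}(ii) forcing $\ba=\bb$ and $e=o(A)$, disjointness of the asymptotic sets forcing the two arcs into one component, then LNE-ness of that component and of $X_{\leq R}$; note that once both arcs lie in a single LNE component $\cC_R^i$ you are already done, since $d_\inn^X\leq d_\inn^{\cC_R^i}\leq L\,e$, so the whole shift-function endgame is superfluous, and you should also dispose of the mixed case where one arc stays bounded and the other escapes, which follows from Lemma \ref{lem:inner-outer-arc}(ii)). The genuine gap is in part (i), in your treatment of $X_{\leq R}$. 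You claim that $X_{\leq R}$ is locally LNE at every point because at points of the sphere $\bS_R^{q-1}$ it "locally agrees with $X$"; it does not, and intersecting an LNE set with a closed ball can destroy local LNE-ness at points where $X$ meets the sphere tangentially. Concretely, take $X=\{(u,v,w)\in\R^3: w=R-|v|\}$, the graph of a $1$-Lipschitz semialgebraic function, hence closed, connected and LNE. A point $(u,v,R-|v|)$ lies in $\bB_R^3$ iff $u^2+2v^2\leq 2R|v|$, so near $\bx_0=(0,0,R)$ the set $X_{\leq R}$ is the union of two pieces (coming from $v\geq0$ and $v\leq 0$) meeting only at $\bx_0$ and both admitting the tangent direction $(1,0,0)$ at $\bx_0$; by Lemma \ref{lem:asympt-not-LNE}(i), $X_{\leq R}$ is not locally LNE at $\bx_0$, hence not LNE, although $X$ is. So the step "compact, locally LNE everywhere, apply Lemma \ref{lem:compact-LNE}" fails, and the definability-of-$M(R)$ remark you append only yields uniformity of the constant once LNE-ness of $X_{\leq R}$ for all large $R$ is already known; it cannot repair this step.

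This is exactly where the substance of the paper's proof of (i) lies: it does not try to verify local LNE-ness of $X_{\leq R}$ pointwise, but argues on the definable family $(X_{\leq R})_R$ at infinity. Assuming $X_{\leq R}$ fails to be LNE for all large $R$, definable choice produces arcs $\aph,\beta$ with $|\aph(R)|,|\beta(R)|\leq R$ realizing the blow-up; Lemma \ref{lem:asympt-u1-u2}(ii) gives $\ba=\bb$, $B=A(1+o(1))$, $e=o(A)$; the LNE-ness of $X_{\geq A(R)}$ (available since $X$ is LNE, via Corollary \ref{cor:repres-LNE-local}(i)) yields a short connecting path, which forces $A(R)=R+o(R)$ since otherwise that path already sits in $X_{\leq R}$; and then the shift function $S=f^{-1}$ with $f(R)=R+(L+1)e(R)$ transports a short connection at level $S(R)$ into $X_{\leq R}$ and closes it up along $\aph,\beta$, contradicting the blow-up because $e(S(R))=e(R)(1+o(1))$. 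In short: the part of your proposal you labelled cosmetic (the $X_{\leq R}$ family) is the hard part and needs this argument, while the part you labelled as the main obstacle (same component at infinity in (ii)) is handled correctly and more easily than you feared.
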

\begin{proof}
We start with point (i). 
Lemma \ref{lem:Ci-LNE} and point (i) of Corollary \ref{cor:repres-LNE-local} 
guarantee the statement is true for the connected components of 
$X_\gR$ whence $R\geq R_0$. Since $X$ is connected, we take $R_0$ large 
enough so that $X_\lR$ is connected once $R\geq R_0$.

\medskip
Let $d^R$ be the inner metric $d_\inn^{X_\lR}$ of $X_\lR$. 
Remark \ref{rmk:inner-LNE} guarantees that we can assume that the family of 
distances $(d^R)_R$ is definable. Thus, either $X_\lR$ is LNE for $R\geq 
R_1$ or $X_\lR$ is not LNE for $R\geq R_1$, for some radius $R_1\geq R_0$. 

\medskip\noindent
$\bullet$ Assume that the family $(X_\lR)_R$ is not LNE as $R \to \infty$. 
Therefore
there exist definable arcs $\aph,\beta:[R_0,\infty) \to X$ such that 
$A(R),B(R) \leq R$, where $A:=|\aph|$ and $B=|\beta|$, and
$$
\lim_{R\to\infty}\frac{d^R(\aph(R),\beta(R))}{e(R)} = \infty \;\;
{\rm where} \;\; e := |\aph - \beta|.
$$
Writing again as $R\to \infty$
$$
\aph = A(\ba+o(1)) \;\; {\rm and} \;\; \beta = B(\bb + o(1))
$$
we deduce again, using (ii) of
Lemma \ref{lem:asympt-u1-u2}, when $R\to\infty$ that
$$
\ba = \bb, \;\; B =  A(1+o(1)), \;\; {\rm and} \; {\rm thus}
\;\; e = o(A). 
$$
We can assume that $R$ is large enough so that $A\leq B$, and $A,B$ are $C^1$
and strictly increasing. Thus $X_{\geq A(R)}$ is LNE, for $R \geq R_0^A := 
A^{-1}(R_0)$ if $R_0$ is large enough. Let $L$ be a LNE constant common 
to all $X_{\gR}$ and $X_{\geq A(R)}$ for $R\geq R_0^A$. 
There exists a rectifiable path $\gm_R$ taking values in $X_{\geq A(R)}$
connecting $\aph(R)$ to $\beta(R)$ such
that 
$$
\lgt(\gm_R) \leq (L+1)e(R). 
$$
If $\lim_{R\to\infty}\frac{A(R)}{R} < 1$, we deduce that for $R$ large enough
that $\gm_R$ is contained in $X_\lR$, which is a contradiction. Therefore
$$
A(R) = R + o(R).
$$
Thus $A',B' \to 1$ as $R\to\infty$. Given
$\eta >0$, for $R \geq R_\eta$ the following estimates hold true 
$$
|A' - 1|, \; |B'-1| \; \leq \; \eta.
$$
Since $\gm_R$ cannot be contained in $X_\lR$, we deduce that 
$$
R \; \geq \; B(R) \; \geq \; A(R) \; \geq \;R - (L+1) e(R).
$$
The function $f:[R,\infty) \to \R$, $R\to R + (L+1)e(R)$ is definable, 
of the form $R + o(R)$, so that $\lim_\infty f' = 1$. 
Let $S := f^{-1}$. Given some positive $\eta$, we can further assume that
$$
|S(R) - (R - (L+1)e(R))| \; \leq \; \eta e(R)
$$
whenever $R\geq R_\eta$. We also observe that
$$
\lgt(\bc([S(R),R])) \; \leq 
\; L_\eta \cdot e(R),
\;\; {\rm for} \;\; \bc =\aph,\beta.
$$
where $L_\eta := (1+\eta) (L+1+\eta)$.
We deduce that there exists $\gm_S :[0,1] \to X_\lR$, connecting $\aph(S(R))$
to $\beta (S(R))$ such that
$$
\lgt(\gm_S) \leq (L+1/2)e(S(R))
$$
so that $\gm_S$ is contained in $X_\lR$. Let $\gm_R$ be a rectifiable path 
contained in $X_\lR$, going from $\aph(R)$ to $\aph(S(R))$ along $\aph$, then
going to $\beta(S(R))$ following $\gm_S$ and then reaching $\beta(R)$ 
along $\beta$. 

\smallskip\noindent
Thus we find
$$
\lgt(\gm_R) \; \leq \; L_\eta \cdot e(R) + (L+1/2)e(S(R)) + L_\eta \cdot e(R)
$$
obtaining a contradiction since $e(S(R)) = e(R) (1+o(1))$.

\medskip 
Proving that the definable family $(X_\lR)_{R\geq R_0}$ admits a 
uniform LNE constant is done exactly as at the end of the proof
of point (i) of Corollary \ref{cor:repres-LNE-local}.

\medskip\noindent
$\bullet$ Assume (ii). 
Thus $X_\lR$ and the connected components of $X_\gR$ are LNE for 
$R\geq R_0$. Let 
$$
D(R) := \sup \left\{\frac{d_\inn^X(\bx,\bx')}{|\bx - \bx'|}, \;\; 
{\rm where} \;\;
\bx \in X_\lR, \; \bx'\in X_\gR, \; \bx \neq  \bx' \right\}.
$$
Since $(\cC_R^i)^\infty \cap (\cC_R^j)^\infty = \emptyset$, point (ii) of 
Lemma \ref{lem:asympt-not-LNE} implies that $X$ is LNE 
if and only if $D(R) <\infty$.
If $D(R) =\infty$, then there are sequences $(\bx_n)_n$ of $X_\lR$ 
and $(\bx_n')_n$ of $X_\gR$ such that 
$$
\frac{d_\inn^X(\bx_n,\bx_n')}{|\bx_n - \bx_n'|} \to \infty
$$
Up to passing to subsequences we deduce that $\bx_n,\bx_n'$ 
converge to $\by \in X_R$. Therefore there exists $S \geq R$ so that both 
sequences $(\bx_n)_n$ and $(\bx_n')_n$ are contained in $X_{\leq S}$, 
contradicting the property of the pair of sequences.
\end{proof}

\smallskip
To finish this section, we present an analogue of Lemma 
\ref{lem:Ci-LNE} at the origin (see the 
sub-analytic case  in \cite{MeSa} and the general case in \cite{Ngu}).

\smallskip
Let $X$ be a closed definable subset of $\Rq$ containing the point $\bx_0$.
Let $r_0$ be a small enough radius for which 
$X \cap \bB^q(\bx_0,r_0)$ is homeomorphic to the cone over $X \cap 
\bS^{q-1}(\bx_0,r_0)$ with vertex $\bx_0$.
Let $Y_1,\ldots,Y_c,$ be the closures of the connected components 
of $X\cap \bB^q(\bx_0,r_0)\setminus \bx_0$. Each $Y_i$ is a closed connected 
definable subset containing the origin. We recall that 
$$
S_{\bx_0} X := \cup_{i=1}^c S_{\bx_0} Y_i.
$$
%
%
\begin{lemma}\label{lem:Yi-LNE}
Let $X$ be a closed definable subset of $\Rq$ containing the origin.
If $X$ is locally LNE at $\bx_0$, then 

\smallskip
1) $1\leq i<i'\leq c \; \Longrightarrow S_{\bx_0} Y_i \cap S_{\bx_0} Y_{i'} 
=\emptyset$.

\smallskip
2) $Y_i$ is locally LNE at $\bx_0$ for each $i=1,\ldots,s$.
\end{lemma}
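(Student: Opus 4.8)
\emph{Proof plan.} Up to translating we may assume $\bx_0=\bbo$, and, shrinking $r_0$ if necessary, we may also assume (Corollary \ref{cor:repres-LNE-local}(ii)) that $X_{\leq r_0}$ is LNE with some constant $L$, while $Y_1,\dots,Y_c$ are still the closures of the connected components of $X\cap(\bB_{r_0}^q\setminus\bbo)$.

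For part 1), suppose for a contradiction that some unit vector $\bu$ lies in $S_{\bbo}Y_i\cap S_{\bbo}Y_{i'}$ with $i\neq i'$; after relabelling take $\{i,i'\}=\{1,2\}$. Point (i) of Lemma \ref{lem:asympt-not-LNE} then produces, for $r\leq r_0$, $C^1$ definable arcs $\by_1:[0,r]\to(Y_1)_{\leq r}$ and $\by_2:[0,r]\to(Y_2)_{\leq r}$ with $|\by_j(t)|=t$ and $d_\inn^X(\by_1(t),\by_2(t))/|\by_1(t)-\by_2(t)|\to\infty$ as $t\to 0$. But for $t<r_0$ both $\by_1(t),\by_2(t)$ lie in $X_{\leq r_0}$, so
$$
d_\inn^X(\by_1(t),\by_2(t))\ \leq\ d_\inn^{X_{\leq r_0}}(\by_1(t),\by_2(t))\ \leq\ L\,|\by_1(t)-\by_2(t)|,
$$
a contradiction. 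Hence the $S_{\bbo}Y_i$ are pairwise disjoint.

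For part 2), I would imitate the proof of Lemma \ref{lem:Ci-LNE} at $\bbo$ instead of at $\infty$. Suppose some $Y_i$ is not locally LNE at $\bbo$. As in the proof of Corollary \ref{cor:repres-LNE-local}(ii) -- using Proposition \ref{prop:defin-metrics-param} and Remark \ref{rmk:inner-LNE} to make the family of inner distances of the representatives $(Y_i)_{\leq r}$ definable, and then curve selection -- one obtains definable arcs $\aph,\beta:(0,r_1]\to Y_i\setminus\bbo$ with $\aph(t),\beta(t)\to\bbo$ and
$$
\lim_{t\to 0}\frac{d_\inn^{Y_i}(\aph(t),\beta(t))}{e(t)}=\infty,\qquad e(t):=|\aph(t)-\beta(t)|.
$$
Since $Y_i$ is connected, point (i) of Lemma \ref{lem:asympt-u1-u2} forces $\aph$ and $\beta$ to have a common limiting direction $\bu$ and limiting ratio $1$; reparametrising so that $|\aph(t)|=t$ this reads $\aph(t)=t\bu+o(t)$ and $\beta(t)=t\bu+o(t)$, whence $e(t)=o(t)$. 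Now there is a rectifiable path $\gm_t$ in $X_{\leq r_0}$ from $\aph(t)$ to $\beta(t)$ with $\lgt(\gm_t)\leq 2L\,e(t)=o(t)$. As $\lgt(\gm_t)=o(t)<t=|\aph(t)|$ for $t$ small, every point of $\gm_t$ is at distance from $\bbo$ strictly between $0$ and $r_0$; hence $\gm_t$ lies in $X\cap(\bB_{r_0}^q\setminus\bbo)$, the disjoint union of the (finitely many, relatively open) connected components of that set, the closure of the $i$-th of which is $Y_i$. Being connected with the endpoint $\aph(t)$ in the $i$-th component, $\gm_t$ is contained in $Y_i$, so $d_\inn^{Y_i}(\aph(t),\beta(t))\leq\lgt(\gm_t)\leq 2L\,e(t)$, contradicting the displayed limit. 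Therefore each $Y_i$ is locally LNE at $\bbo$.

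The only step that is not routine is the production, in part 2), of the definable pair of arcs realising the blow-up at the right scale; this is exactly where the definability of the family of inner metrics of the representatives and the conical-structure bookkeeping behind Corollary \ref{cor:repres-LNE-local}(ii) (and Lemma \ref{lem:Ci-LNE}) do the work, and it should be transcribed verbatim from there. Everything after that is the elementary observation that a path shorter than the distance to $\bbo$ cannot leave the connected component of $X\setminus\bbo$ in which it starts, together with the rate estimate $e(t)=o(t)$ coming from Lemma \ref{lem:asympt-u1-u2}(i).
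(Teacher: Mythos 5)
Your proof is correct and takes essentially the same route as the paper: part 1) is the paper's appeal to Lemma \ref{lem:asympt-not-LNE}(i) played against the local LNE constant of $X$ at $\bbo$, and part 2) is precisely the adaptation of the scheme of Lemma \ref{lem:Ci-LNE} from $\infty$ to $\bbo$ that the paper invokes, with the confinement of the short connecting path to $Y_i$ spelled out. (A cosmetic point: Corollary \ref{cor:repres-LNE-local}(ii), whose standing hypotheses include connectedness of $X$, is not really needed here, since the LNE neighbourhood furnished directly by the definition of local LNE at $\bbo$ already suffices once the $o(t)$ length bound confines the path to the punctured ball.)
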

\begin{proof}
We assume $\bx_0 = \bbo$. 
Point 1) follows from point (i) of Lemma \ref{lem:asympt-not-LNE}. 
Point 2) will follow from the scheme of the proof of Lemma 
\ref{lem:Ci-LNE}, adapting the arguments from the situation at $\infty$
to that at $\bbo$.
\end{proof}
\begin{remark}\label{rmk:raidus-like-2}
Similarly to Remark \ref{rmk:repres-radius-like}, the statements of this section
have all analogues with a radius like function nearby $\bbo$ (or nearby $
\infty$).
\end{remark}
%
%
%
%
%
%
%
%
%
%
%
%
%
%
%
%
%
%
%
%
%
%
%
%
%
%
%
%
%
%
%
%
%
%
%
%
%
%
%
%
%
%
%
%
\section{LNE property under inversion}\label{section:LLNEaIvsO}

Let $\ioq:\Rq\setminus\bbo \to \Rq\setminus\bbo$ be the Euclidean inversion
defined as
$$
\bx \mapsto \ioq(\bx) := \frac{\bx}{|\bx|^2}.
$$
It is a  $C^\infty$ diffeomorphism definable in any
o-minimal structure.
\begin{lemma}\label{lem:LNE-inversion}
Let $X$ be a closed definable subset of $\Rq$ containing $\bbo$ and such
that the germ $(X\setminus\bbo,\bbo)$ is connected. The subset $X$ is locally 
LNE at $\bbo$ if and only if $\ioq(X\setminus \bbo)$ is locally LNE at 
$\infty$.
\end{lemma}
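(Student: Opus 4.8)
The plan is to transfer the local LNE property at $\bbo$ to the local LNE property at $\infty$ through the inversion $\ioq$, by controlling how $\ioq$ distorts both the outer distance and the inner distance on pairs of points near $\bbo$ (resp. near $\infty$). The key elementary fact is the identity
$$
|\ioq(\bx) - \ioq(\bx')| \;=\; \frac{|\bx - \bx'|}{|\bx|\,|\bx'|}, \qquad \bx,\bx'\in\Rq\setminus\bbo,
$$
which shows that $\ioq$ multiplies outer distances by the factor $(|\bx|\,|\bx'|)^{-1}$. Since $\ioq$ is a $C^\infty$ diffeomorphism, a curve $\gm$ in $X\setminus\bbo$ is rectifiable if and only if $\ioq\circ\gm$ is, and its length gets rescaled pointwise by $|\ioq'|$; integrating, one sees that inner distances are distorted by essentially the same order of magnitude as outer distances \emph{provided} the two points have comparable norms. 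So the real content is: two points of $X$ near $\bbo$ that are \emph{close relative to their norm} map to two points of $\ioq(X)$ near $\infty$ that are again close relative to their (large) norm, and the inner-distance ratios are comparable up to a bounded multiplicative constant.

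First I would record the precise metric distortion lemma for $\ioq$: for $\bx,\bx'$ in a "cone-like" region where $C^{-1}\le |\bx|/|\bx'|\le C$, both $|\ioq(\bx)-\ioq(\bx')|/|\bx-\bx'|$ and the ratio of any curve length to its image length are pinched between constants depending only on $C$ and on the common order of $|\bx|$. Then I would invoke Corollary~\ref{cor:repres-LNE-local}: $X$ locally LNE at $\bbo$ gives a definable family $X_{\le r}$ with a \emph{uniform} LNE constant $L$ for $r\le r_0$; dually, $\ioq(X\setminus\bbo)$ locally LNE at $\infty$ gives uniform LNE constants for $\ioq(X\setminus\bbo)_{\ge R}$. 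The crucial point is that $\ioq$ maps the sphere $\bS_r^{q-1}$ to $\bS_{1/r}^{q-1}$, hence maps $X_{\le r}\setminus\{\bbo\}$ bijectively onto $\ioq(X\setminus\bbo)_{\ge 1/r}$, so the two families of representatives correspond under $\ioq$. For the forward direction, take $\bx,\bx'\in\ioq(X\setminus\bbo)_{\ge R}$ with preimages $\by=\ioq(\bx),\by'=\ioq(\bx')\in X_{\le 1/R}$; connect $\by,\by'$ inside $X_{\le 1/R}$ by a path of length $\le L\,|\by-\by'|$, push it forward by $\ioq$, and estimate its length against $|\bx-\bx'|$ using the distortion lemma — the norms along the path stay comparable because the path lies in a small ball, so the pushed-forward path realizes $d_\inn^{\ioq(X)_{\ge R}}(\bx,\bx')\le L'\,|\bx-\bx'|$ with $L'$ independent of $R$. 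The reverse implication is symmetric, since $\ioq\circ\ioq=\mathrm{id}$ and the roles of "$\le r$" and "$\ge 1/r$" swap.

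The technical heart, and the step I expect to be the main obstacle, is controlling the pushed-forward path when the connecting path inside $X_{\le 1/R}$ swings close to the origin, i.e. when some points on it have norm much smaller than $|\by|,|\by'|$: there $\ioq$ has enormous derivative and naively the pushed-forward length could blow up. To handle this I would use the connectedness of the germ $(X\setminus\bbo,\bbo)$ together with Lemma~\ref{lem:inner-outer-arc} and the conical structure: one can always choose the connecting path so that, after passing through a fixed small link $X\cap\bS^{q-1}_s$ if necessary, it stays within a region $\{c_1\varrho \le |\bx|\le c_2\varrho\}$ for $\varrho=\max(|\by|,|\by'|)$, paying only a bounded multiplicative price (this is exactly the kind of estimate proved in Lemma~\ref{lem:asympt-u1-u2}). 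Alternatively — and this is cleaner — one reduces to pairs $\by,\by'$ with $|\by|,|\by'|$ comparable by the same dichotomy used throughout Section~\ref{section:IODDA}: if $|\by'|/|\by|\not\to 1$ or the limiting directions differ, Lemma~\ref{lem:asympt-u1-u2} already bounds the inner/outer ratio, so one only needs the distortion estimate in the "tangential" regime where norms are automatically comparable. Once that reduction is in place, the distortion lemma for $\ioq$ finishes the proof with constants depending only on $q$, $L$, and the fixed small radius.
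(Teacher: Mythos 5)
Your proposal follows essentially the same route as the paper's proof: the distance identity $|\ioq(\bx)-\ioq(\bx')| = |\bx-\bx'|/(|\bx|\,|\bx'|)$, uniform LNE constants for the representatives $X_{\leq r}$ and $\ioq(X\setminus\bbo)_{\geq R}$ from Corollary~\ref{cor:repres-LNE-local}, reduction to the tangential regime via the dichotomy of Lemma~\ref{lem:asympt-u1-u2} (which the paper implements by extracting definable witnessing arcs and arguing by contradiction, using the definability of the inner-distance family), and the key observation that in that regime the connecting path has length $O(e(r))=o(a(r))$, hence stays at norm $\geq a(r)/2$, so its image under $\ioq$ has length $\lesssim A(R)^2 e(r)$, comparable to $E(R)=A(R)B(R)e(r)$. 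This matches the paper's argument, including the symmetric treatment of the converse.
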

\begin{proof}
Again, following Remark \ref{rmk:inner-LNE} the property
of being locally LNE at $\omg$, with $\omg = \infty, \bbo$, is definable.
For the proof, we write 
$$
\wtX := \ioq(X\setminus \bbo).
$$

\medskip\noindent
$\bullet$ \em Assume that $X$ is locally LNE at $\bbo$. \em 

\smallskip 
According to point (ii) of Corollary 
\ref{cor:repres-LNE-local}, there exists a positive radius  
$r_0$ such that $X_\lr$ is LNE for any positive radius $r\leq r_0$ with LNE 
constant $L$ independent on $r$. 

\medskip
Let $R_0 := \frac{1}{r_0}$ 
and $R := \frac{1}{r}$. 

\smallskip
Suppose that $\wtX$ is not locally LNE at $\infty$. Therefore there exist  
definable arcs $\wtap,\wtbe :[R_0,\infty) \to \wtX_{\geq R_0}$ such that
$\wtap,\wtbe \to \infty$ as $R\to \infty$ and 
$$
\lim_{R\to\infty} \frac{d_\inn^{\wtX\setminus K}(\wtap(R),\wtbe(R))}{E(R)} = 
\infty, \;\; {\rm where} \;\; E(R) :=|\wtap(R) - \wtbe(R)|,
$$     
where $K$ is a compact of $\Rq$ such that $\wtX\setminus K$ is a LNE.
Denoting $A:=|\wtap|$ and $B:=|\wtbe|$, 
let us write again
$$
\wtap = A(\ba + o(1)) \;\; {\rm and} \;\; \wtbe = B(\bb + o(1)).
$$
Let $\aph,\beta : ]0,r_0]\to X_{\leq r_0}\setminus \bbo$ be the definable 
arcs defined as
$$
\aph := \iota_q \circ \wtap \circ \iota_1 = a(\ba + o(1)) \;\; {\rm and} \;\;
\beta := \iota_q \circ \wtbe \circ \iota_1 = b(\bb + o(1)).
$$
Let $e(r) := |\aph(r) - \beta(r)|$. Using \eqref{eq:law-cosine}, we get 
\begin{equation}\label{eq:E-e}
E(R) = A(R)\cdot B(R)\cdot e(r) \;\; {\rm with} \;\; r = \frac{1}{R}.
\end{equation}
First, point (ii) of Lemma \ref{lem:asympt-u1-u2} yields
$$
\ba = \bb \;\; {\rm and} \;\; B = A + o(A) \;\; {\rm as} \; R\to \infty.
$$
Thus we also find that $b = a + o(a)$ as $r\to 0$ and 
$E(R) = A(R)^2(1+o(1)) e(r)$. Therefore there exists a rectifiable curve
$\gm_r$ connecting $\aph(r)$ to $\beta(r)$ within $X_{\leq a(r)}$
since $a \geq b$ and such that
$$
\lgt(\gm_r) \; \leq \; (L+1) e(r)
$$
since $X_{\leq t}$ is LNE with LNE constant $L$ for any $t\leq r_0$. 
Furthermore, since
$e(r) = o(a(r))$ we can assume that $r$ is small enough so that
$$
\min |\gm_r| \geq \frac{a(r)}{2} = \frac{1}{2A(R)}. 
$$
Let $\wtgm_R := \iota_q (\gm_r)$ connecting $\wtap(R)$ to $\wtbe(R)$. 
Assuming that $\gm_r$ is parametrized over $[0,1]$, we 
deduce that 
$$
\lgt(\wtgm_R) = \int_0^1 \frac{|\gm_r'(t)|}{|\gm_r (t)|^2} \rd t \; \leq
\; 4 A(R)^2 \lgt(\gm_r) \; \leq 4 (L+1) A(R)^2 e(r).
$$
Since $B = A +o(A)$, the estimate of $\lgt(\wtgm_R)$ and Equation
\eqref{eq:E-e} produce a contradiction. 

\medskip\noindent
$\bullet$ \em Assume that $\wtX$ is locally LNE at $\infty$. \em

\smallskip
Suppose that $X$ is not locally LNE at $\bbo$. Therefore there exists  
definable arcs $\aph,\beta : [0,r_0] \to X_{\leq r_0}$ such that 
$\aph(0) = \beta(0) = \bbo$ and 
$$
\lim_{r\to 0} \frac{d_\inn^{X_{\leq r_0}}(\aph(r),\beta(r))}{e(r)} = \infty,
\;\; {\rm where} \;\; e(r) = |\aph(r) - \beta(r)|.
$$
Let $a := |\aph|$ and $b := |\beta|$ so that we can write
$$
\aph = a (\ba + o(1)) \;\; {\rm and} \;\; \beta = b (\bb + o(1)).
$$
Point (i) of Lemma \ref{lem:asympt-u1-u2} implies that as $r$ goes to $0$
$$
\ba = \bb, \;\; b = a + o(a) \;\; {\rm and} \; {\rm thus} \;\; 
e(r) = o(a(r)).
$$
Working with $\wtap := \iota_q\circ\aph$ and $\wtbe := \iota_q\circ\beta$,
and denoting $E := |\wtap - \wtbe|$, we conclude as in the last part of the
previous case working with $\wtap,\wtbe$ in $X_{\geq R_0}$ (which is LNE)
and using the identity   
$$
e(r) = a(r)b(r) E(R) = a(r)^2(1+ o(1)) E(R) \;\; {\rm for} \;\; R = \
\frac{1}{r}.
$$
when $r\to 0$ to produce a contradiction, as was done in the previous case.
\end{proof}
%
%

%
%
%
%
%
%
%
%
%
%
%
%
%
%
%
%
%
%
%
%
%
%
%
%
%
%
%
%
%
%
%
%
%
%
%
%
%
%
%
%
%
%
%
%
%
\section{Main result}\label{section:MR}
Let $N_q := (0,\ldots,0,1) \in \bS^q$ be the north pole of $\bS^q$.
Let 
$$
\sgm_q: \Rq \to \bS^q\setminus N_q, \;\; \bx \to \left(\frac{2\bx}{1+R^2},
\frac{R^2-1}{R^2+1}\right)\in\Rq\times[-1,1[
$$
where $R := |\bx|$, be the inverse of the stereographic projection 
$\bS^q\setminus N_q \to \Rq$.
For convenience, we consider the standard embedding of the sphere $\bS^q$  into $\Rq$ and thus the standard inner distance $d_\inn^{\bS^q}$ on the sphere is equivalent to the euclidean distance $d_{\bS^q}$ in~$\Rq$ restricted to the sphere
$$
d_{\bS^q} \; \leq \; d_\inn^{\bS^q} \; \leq \; \frac{\pi}{2} d_{\bS^q}.
$$
Thus a subset of $\bS^q$ is LNE in $\bS^q$ if and only if it is LNE  as a subset
of $\R^{q+1}$, for more generality compare \cite[Proposition 1.9]{CoGrMi2}.

\begin{lemma}\label{lem:main}
Let $X$ be a closed definable subset of $\Rq$ such that the germ $(X,\infty)$
is connected. The subset $X$ is locally LNE at $\infty$ if and only if 
$\sgm_q(X)$ is locally LNE at $N_q$ in $\bS^q$ if and only if 
$\sgm_q(X)\cup N_q$ is locally LNE at $N_q$ in $\bS^q$.
\end{lemma}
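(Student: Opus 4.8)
The plan is to reduce the statement about the stereographic image near $N_q$ to the already-established behaviour under Euclidean inversion, namely Lemma~\ref{lem:LNE-inversion}. The key observation is that the composition $\sgm_q$ followed by a suitable rigid motion of $\bS^q$ sending $N_q$ to $\sgm_q(\bbo)$ (the south pole) differs from the inversion $\iota_q$ only by maps that are bi-Lipschitz on the relevant neighbourhoods. More concretely, if one writes $\psi := \sgm_q \circ \iota_1$ (precomposition with the inversion on $\R^q$), then $\psi$ extends continuously by $\psi(\infty) = N_q$ and, after composing with a rotation of $\bS^q$, one gets a map whose behaviour near $\infty \in \Rq$ corresponds to the behaviour of the identity near $\bbo$, up to uniformly bounded metric distortion. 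Thus $\sgm_q(X)$ is locally LNE at $N_q$ iff $\iota_q(X \setminus \bbo)$ — equivalently, after the inversion identification, $X$ itself near $\infty$ — is locally LNE at $\infty$.

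The steps I would carry out are as follows. First, I would record the explicit metric comparison: for $\bx, \bx' \in \Rq$ with $|\bx|, |\bx'| \geq R$ large, compute $|\sgm_q(\bx) - \sgm_q(\bx')|$ in terms of $|\bx - \bx'|$ and the radii $|\bx|, |\bx'|$, obtaining an estimate of the shape $|\sgm_q(\bx)-\sgm_q(\bx')| \asymp \frac{|\bx-\bx'|}{|\bx|\,|\bx'|}$ (the same shape as \eqref{eq:E-e} for the inversion), valid uniformly in the region $|\bx|,|\bx'| \geq R$. This is the analogue, near $N_q$, of the identity used at the end of the proof of Lemma~\ref{lem:LNE-inversion}. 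Second, I would likewise compare arc-lengths: if $\gm$ is a rectifiable path in $\Rq$ staying in $X_{\geq R}$, then $\lgt(\sgm_q \circ \gm) \asymp \int |\gm'(t)|/|\gm(t)|^2\,\rd t$ with constants depending only on $q$, exactly as for $\iota_q$. Third, having the two comparisons, the equivalence ``$X$ locally LNE at $\infty$ $\iff$ $\sgm_q(X)$ locally LNE at $N_q$'' follows by transporting the LNE inequality for inner versus outer distance through $\sgm_q$ on the neighbourhoods $X_{\geq R}$ and their images; one uses Corollary~\ref{cor:repres-LNE-local}(i) to know that local LNE at $\infty$ upgrades to a uniform LNE constant on all $X_{\geq R}$, $R \geq R_0$, which is what makes the transport work. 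Fourth, for the last ``iff'' (adding the point $N_q$), I would invoke Lemma~\ref{lem:compact-LNE}: since $\clos(\sgm_q(X)) = \sgm_q(X) \cup N_q$ is compact in $\bS^q$ and connected (the germ $(X,\infty)$ being connected ensures $N_q$ is a non-isolated point with connected link), it is LNE near $N_q$ iff it is locally LNE at each of its points near $N_q$ — and at points other than $N_q$, $\sgm_q$ is a local bi-Lipschitz diffeomorphism onto its image, so local LNE there is automatic from closedness; hence adding or removing $N_q$ does not change the conclusion. Alternatively, one notes directly that $\sgm_q(X)$ and $\sgm_q(X)\cup N_q$ have the same inner and outer distances up to the point $N_q$ itself, which only affects pairs one of which is $N_q$, and those are controlled because the arc-length estimate above shows curves to $N_q$ have finite length comparable to the outer distance.

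The main obstacle I expect is purely computational: establishing the uniform two-sided estimates for $|\sgm_q(\bx) - \sgm_q(\bx')|$ and for lengths under $\sgm_q$, with constants independent of $R$, in the unbounded region. Unlike the inversion $\iota_q$, whose derivative has a clean conformal form, $\sgm_q$ mixes the $\Rq$-coordinates with the last coordinate $\frac{R^2-1}{R^2+1}$, so one must be careful that the ``vertical'' component does not spoil the comparison; however, since $\frac{R^2-1}{R^2+1} = 1 - \frac{2}{R^2+1}$ is flat to order $R^{-2}$ at infinity, its contribution is lower-order and the estimate survives. Once these estimates are in hand, the rest is a direct transcription of the inversion argument in Lemma~\ref{lem:LNE-inversion}, so I would in fact prefer to phrase the proof as: observe that $\sgm_q$ and $\iota_q$ agree up to uniformly bi-Lipschitz maps near $\infty$ (respectively near $N_q$), hence Lemma~\ref{lem:LNE-inversion} together with the bi-Lipschitz invariance of the LNE property (for the chosen constant up to a bounded factor) gives the first equivalence immediately, and Lemma~\ref{lem:compact-LNE} gives the second.
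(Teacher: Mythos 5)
Your proposal is correct and, in its final preferred formulation, coincides with the paper's argument: the paper writes $\sgm_q=\phi\circ\ioq$ on $\{|\bx|\geq 2\}$, where $\phi$ (which is exactly your $\sgm_q\circ\ioq$, extended by $\phi(\bbo)=N_q$) is a germ bi-Lipschitz homeomorphism $((\Rq,\bbo),|-|)\to((\bS^q,N_q),d_{\bS^q})$, and then invokes Lemma~\ref{lem:LNE-inversion}, with the second equivalence dismissed as immediate. Your minor slips ($\iota_1$ for $\ioq$, $\psi(\infty)$ instead of $\psi(\bbo)=N_q$, the unnecessary rigid motion) and the heavier computational route you sketch first are harmless, since the clean factorization you settle on is exactly what the paper does.
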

\begin{proof}
Consider the following mapping $\phi : \bB_\frac{1}{2}^q \to \bS^q$ defined
as 
$$
\by \to \left(\frac{1}{1+r^2}\cdot 2\by,\frac{1-r^2}{1+r^2}\right)
$$
where $r =|\by|$. It is a $C^\infty$ and semi-algebraic diffeomorphism
onto its image. In particular it induces a bi-Lipschitz
homeomorphism germ
$$
((\Rq,\bbo),|-|) \to ((\bS^q,N_q),d_{\bS^q}).
$$ 
We also check that
$$
|\bx| \geq 2 \; \Longrightarrow \; \phi\circ\ioq(\bx) = \sgm_q(\bx).
$$
Lemma \ref{lem:LNE-inversion} implies that $X$ is locally LNE at $\infty$
if and
only if $\ioq(X)$ is locally LNE at $\bbo$, if and only if $\sgm_q(X)$
is locally LNE at $N_q$ in $\bS^q$. The last equivalence is obvious.
\end{proof}
We can now prove now the main result:
\begin{theorem}\label{thm:main}
A closed connected definable subset $X$ of $\Rq$ is LNE in $\Rq$ if and only if $\sgm_q(X)\cup N_q$ is LNE in $\bS^q$.
\end{theorem}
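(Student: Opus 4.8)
The strategy is to decompose the global LNE property into a local one at infinity plus a local one at every finite point, and then transport these local properties across $\sgm_q$ using the already-established tools. Write $Z := \clos(\sgm_q(X)) = \sgm_q(X) \cup N_q$, a closed connected definable subset of the compact Riemannian manifold $\bS^q$. By Lemma~\ref{lem:compact-LNE}, $Z$ is LNE in $\bS^q$ if and only if $Z$ is locally LNE at each of its points. The points of $Z$ split into two kinds: the point $N_q$, and the points of $\sgm_q(X)$, which are in bijection with $X$ via the restriction of the (globally defined, $C^\infty$) stereographic projection.

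\textbf{Step 1: finite points.} For $\bx_0 \in X$ the map $\sgm_q$ is a $C^\infty$ diffeomorphism from a neighbourhood of $\bx_0$ in $\Rq$ onto a neighbourhood of $\sgm_q(\bx_0)$ in $\bS^q \setminus N_q$, hence induces a bi-Lipschitz homeomorphism germ $(\Rq, \bx_0) \to (\bS^q, \sgm_q(\bx_0))$ for the ambient metrics. A bi-Lipschitz homeomorphism germ preserves both inner and outer distances up to a multiplicative constant, so it preserves the locally-LNE property. Thus $X$ is locally LNE at $\bx_0$ if and only if $Z$ is locally LNE at $\sgm_q(\bx_0)$. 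This step is routine and I would dispatch it quickly.

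\textbf{Step 2: the point at infinity.} This is exactly Lemma~\ref{lem:main}: since $(X,\infty)$ is connected (as $X$ is connected and closed, its germ at infinity has the connected components $\cC_R^i$; actually one should note that $X$ connected does not immediately force $(X,\infty)$ connected, so here I would either invoke the hypothesis as in Lemma~\ref{lem:main} applied componentwise, or observe that Lemma~\ref{lem:main} as stated handles the connected-at-infinity case and the general case follows since local LNE at $N_q$ of $Z$ is equivalent to local LNE at $\infty$ of each connected component $\cC_R^i$ of $X_{\geq R}$, matched up via the permutation $\ve$, and $X$ is locally LNE at $\infty$ iff each $\cC_R^i$ is — this last equivalence being part (i) of Lemma~\ref{lem:Ci-LNE} together with the reassembly in Proposition~\ref{prop:X-LNE-repr}). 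In any case: $X$ is locally LNE at $\infty$ if and only if $\sgm_q(X) \cup N_q$ is locally LNE at $N_q$ in $\bS^q$.

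\textbf{Step 3: assembling the two directions.} For the forward direction, suppose $X$ is LNE in $\Rq$. Then $X$ is locally LNE at every finite point $\bx_0$ and locally LNE at $\infty$ (the latter trivially, since an LNE set is locally LNE everywhere including at infinity). By Steps 1 and 2, $Z$ is locally LNE at $\sgm_q(\bx_0)$ for every $\bx_0 \in X$ and at $N_q$; that is, $Z$ is locally LNE at each of its points, so by Lemma~\ref{lem:compact-LNE} $Z$ is LNE in $\bS^q$. Conversely, if $Z$ is LNE in $\bS^q$, then $Z$ is locally LNE at each of its points; by Steps 1 and 2 again, $X$ is locally LNE at each finite point and at $\infty$. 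It remains to upgrade this collection of local statements to a single global LNE constant for $X$. Here I would invoke Proposition~\ref{prop:X-LNE-repr}: being locally LNE at $\infty$ gives, via part (i) of Corollary~\ref{cor:repres-LNE-local} and Lemma~\ref{lem:Ci-LNE}, a uniform LNE constant $L$ for $X_{\leq R}$ and for each connected component of $X_{\geq R}$ for all large $R$; moreover the components $(\cC_R^i)^\infty$ are pairwise disjoint by part 1) of Lemma~\ref{lem:Yi-LNE}-style reasoning at infinity (or directly because $Z$ being locally LNE at $N_q$ forces the asymptotic sets to be disjoint, via Lemma~\ref{lem:asympt-not-LNE}(ii)); hence part (ii) of Proposition~\ref{prop:X-LNE-repr} yields that $X$ itself is LNE. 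Note the local-LNE information at the finite points is subsumed — once $X_{\leq R}$ is LNE for large $R$ there are no finite points left to worry about.

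\textbf{Main obstacle.} The genuinely delicate point is not any of the local equivalences — those are handed to us by Lemmas~\ref{lem:LNE-inversion}, \ref{lem:main}, and \ref{lem:compact-LNE} — but rather the passage in the ``if'' direction from a family of local LNE properties (at $\infty$ and at finite points) to one uniform global constant. This is precisely the content of Proposition~\ref{prop:X-LNE-repr}(ii), whose hypotheses require both that $X_{\leq R}$ and each component of $X_{\geq R}$ be LNE with a common constant and that the asymptotic sets $(\cC_R^i)^\infty$ be pairwise disjoint. Verifying the disjointness of the asymptotic sets from the hypothesis that $Z$ is LNE in $\bS^q$ — equivalently locally LNE at $N_q$ — is the step I would be most careful with: if two distinct components $\cC_R^i, \cC_R^j$ shared an asymptotic direction $\bu$, then under $\sgm_q$ their images would approach $N_q$ tangentially along a common direction while being separated near $N_q$, and Lemma~\ref{lem:asympt-not-LNE}(ii) (transported to $\bS^q$) would contradict local LNE of $Z$ at $N_q$. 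Once that is in hand, the rest is bookkeeping.
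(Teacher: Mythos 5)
Your overall architecture (localize at finite points and at infinity, transport with $\sgm_q$, reassemble with Proposition~\ref{prop:X-LNE-repr}(ii)) is close to the paper's, but Step 2 contains a genuine error that leaves the ``only if'' direction without its hardest step. Lemma~\ref{lem:main} only applies when the germ $(X,\infty)$ is connected, and your proposed substitute --- ``$Z:=\sgm_q(X)\cup N_q$ is locally LNE at $N_q$ if and only if each connected component $\cC_R^i$ of $X_\gR$ is locally LNE at $\infty$'' --- is false: take $X\subset\R^2$ the union of the two rays $\{y=\pm1,\ x\geq 0\}$ and a joining segment; each end is LNE, yet their images under $\sgm_2$ are two arcs mutually tangent at $N_2$, whose union is not locally LNE at $N_2$ (Lemma~\ref{lem:asympt-not-LNE}(i) applied at $N_2$). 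Also, ``an LNE set is locally LNE at infinity'' fails under Definition~\ref{def:LNE-infty} whenever there are $c\geq 2$ ends (e.g.\ a line: $X_\gR$ is disconnected, so never LNE). So when $X$ is LNE with several ends, knowing that each $\sgm_q(\cC_R^i)\cup N_q$ is locally LNE at $N_q$ does not yet give local LNE of $Z$ at $N_q$: one must additionally use the disjointness of the asymptotic sets $(\cC_R^i)^\infty$ (which follows from $X$ LNE via Lemma~\ref{lem:asympt-not-LNE}(ii)), transport it to a positive angular separation of the tangent links $S_{N_q}$ of the ends' images, and then run a quantitative gluing argument --- in the paper this is Claim~\ref{claim:Z-LNE-N}, using the conic structure of $Z(R)$ at $N_q$ and \cite{KuPa} to join any point $\bz$ to $N_q$ by a path of length $\leq D|\bz-N_q|$, plus the law of cosines. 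You invoke the disjointness issue only in the ``if'' direction; in fact it is the main content of the ``only if'' direction, and your proposal supplies no argument for it.

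There is a second, smaller gap in the ``if'' direction: the LNE property of the bounded truncations $X_\lR$. You attribute it to Corollary~\ref{cor:repres-LNE-local}(i) and Lemma~\ref{lem:Ci-LNE}, but the former concerns only $X_\gR$ (or each end), and the latter --- like Proposition~\ref{prop:X-LNE-repr}(i) --- assumes $X$ is LNE, which is exactly what you are trying to prove, so as cited the step is circular. The needed input has to come from the hypothesis that $Z$ is LNE: since $\sgm_q$ is bi-Lipschitz on bounded sets, $X_\lR$ is LNE precisely when $Z\cap\{y\leq (R^2-1)/(R^2+1)\}$ is, and establishing that these truncations of the LNE set $Z$ are LNE (uniformly in $R$) requires re-running the argument of Proposition~\ref{prop:X-LNE-repr}(i) for the family $(\sgm_q(X_\lR))_{R\geq R_0}$, which is what the paper does. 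Once these two points are repaired, your remaining assembly --- each end LNE with uniform constant via Lemma~\ref{lem:main} and Corollary~\ref{cor:repres-LNE-local}(i), disjointness of the $(\cC_R^i)^\infty$ extracted from local LNE of $Z$ at $N_q$, and Proposition~\ref{prop:X-LNE-repr}(ii) to conclude --- does match the paper's route.
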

\begin{proof}

If $X$ is compact the result is immediate since $\sgm_q$ is bi-Lipschitz over 
any compact subset of $\Rq$. 

\medskip\noindent
$\bullet$ \em  Assume $X$ is unbounded and LNE. \em Let 
$$
Z:= \sgm_q (X) \cup N_q = \clos(\sgm_q(X)).
$$ 
Since $X$ is LNE, and since the inversion is a $C^\infty$ diffeomorphism, 
the subset $Z$ is locally LNE at each point of $Z\setminus N_q$.
Let $R$ large enough so that 
$X_\gR$ is a union of finitely many closed connected cylinders 
$\cC_R^1,\ldots,\cC_R^c$. Let 
$$
Y_R^i := \sgm_q(\cC_R^i) \cup N_q = \clos(\sgm_q(\cC_R^i)).
$$
Combining Lemma \ref{lem:Ci-LNE}, Lemma \ref{lem:main} with point (ii) of 
Corollary \ref{cor:repres-LNE-local}, we can assume that $\cC_R^i$ is LNE 
and so is $Y_R^i$. For each $i=1,\ldots,c$ , we define
$$
C_i := (\cC_R^i)^\infty \;\; {\rm and} \;\;
S_i := S_N Y_R^i.
$$
First, for each $i=1,\ldots,c$, it is straightforward to check that 
$$
S_\bbo (\clos(\ioq(\cC_R^i))) = C_i. 
$$
Second using the application $\phi$ 
introduced in the proof of Lemma \ref{lem:main} we check
that, 
$$
D_\bbo \phi \cdot (\wh{C_i}^+) = \wh{S_i}^+ \subset \Rq \times 0 = 
T_N\bS^q. 
$$
In particular we deduce that $S_i \cap S_{i'}$ is empty whenever
$i,i'$ are distinct. 
\begin{claim}\label{claim:Z-LNE-N}
$Z$ is locally LNE at $N$ in $\bS^q$.
\end{claim}
\begin{proof}
For each $i=1,\ldots,c$, note that
$$
S_i \subset \bS^{q-1}\times 0 \subset T_N\bS^q = \R^q\times 0.
$$ 
To simplify notations, let
$$
Z(R) := \clos(\sgm_q(X_\gR))\;\;  {\rm and} \;\;
Z_i := Y_R^i. 
$$  
Let $\dist$ be the Euclidean distance in $\Rq\times\R$. Then 
$$
\dlt := \min_{1\leq i<j \leq c} \dist(S_i,S_j) 
$$
is positive. Thus we can assume that $R$ is large enough so that 
$$
\min_{1\leq i < j \leq c} \left\{ 
\dist \left( \frac{\bz_i-N_q}{|\bz_i-N_q|},\frac{\bz_j-N_q}{|\bz_j-N_q|} \right), 
\; \bz_i \in Z_i\setminus N_q, \;\bz_j \in Z_j\setminus N_q \right\} \; \geq \; 
\frac{\dlt}{2}.
$$
Therefore there exists $2\tht_Z \in ]0,\pi]$ such that for all $\bz_i 
\in Z_i\setminus N_q$ and $\bz_j \in Z_j \setminus N_q$, with $1\leq i<j\leq c$,
the non-oriented angle (in $[0,\pi]$) between $\bz_i-N_q$ and $\bz_j - N_q$
is larger than or equal to $2\tht_Z$.

Since $Z(R)$ is definable and satisfies to the conic structure theorem 
at $N_q$, when $R_0$ is large enough, by \cite[Corollary 3]{KuPa}
there exists a positive constant $D$, depending only on $R_0$, such that 
any points $\bz$ of $Z(R)$ can be joined to $N_q$ by a definable arc 
of length at most $D|\bz - N_q|$.

Let $\bz_i \in Y_i$ for $i=1,2$. Let $r_i := |\bz_i -N_q|$. 
To connect $\bz_1$ and $\bz_2$ in $Z(R)$ it is necessary to go through
$N$. Therefore
$$
d_\inn^{Z(R)}(\bz_1,\bz_2) \leq  D (r_1 + r_2).
$$
Using again the law of cosines \eqref{eq:law-cosine}, we observe that
$$
|\bz_1 - \bz_2| 
\; \geq \; \frac{\sin\tht_Z}{D} \cdot d_\inn^{Z(R)}(\bz_1,\bz_2). 
$$
Since each $Z_i$ is LNE in $\bS^q$, we conclude that $Z(R)$ is LNE
in $\bS^q$.
\end{proof}
$Z$ is compact, connected, and locally LNE at each of its point, 
thus is LNE in $\bS^q$ by Lemma \ref{lem:compact-LNE}.

\medskip\noindent
$\bullet$ \em  Assume $Z := \clos(\sgm_q(X))$ is LNE in $\bS^q$. \em  

Let $\by = (\by',y)$ be Euclidean coordinates $\R^{q+1} =\Rq\times\R$.

Let $\pi :\bS^q \to \Rq = T_N\bS^q = \Rq \times 0 \subset \R^{q+1} = 
T_N\R^{q+1}$, be the orthogonal projection. The image of $\pi(N)$ will be 
$\bbo$. Let $h \in (0,1)$ be given. 
The restriction of $\pi$ to $\bS^q \cap \{y 
\geq h\}$ is definable and bi-Lipschitz on its image. Thus, there exists a 
positive radius $R_0$ such that for each $R\geq R_0$ the subset 
$$
Z(R) := Z\cap \left\{y\geq \frac{R^2-1}{R^2+1}\right\} = 
\clos(\sgm_q(X_\gR))
$$ 
is LNE in $\bS^q$. If $R_0$ is large enough, then $Z(R)$ is connected for
any $R$. Let $Y_R^1,\ldots,Y_R^c,$ be the 
closure of the connected components of $Z(R)\setminus N_q$. Each of 
which is LNE by point (ii) of Corollary \ref{cor:repres-LNE-local}.

\medskip
Lemma \ref{lem:main} implies that each $\cC_R^i := \sgm_q (Y_R^i \setminus 
N_q)$ is locally LNE at $\infty$. Therefore up to, taking a larger $R_0$, we 
deduce that $X_\lR$ is connected and, by point (i) of Corollary 
\ref{cor:repres-LNE-local}, each connected component $\cC_R^i$ of $X_\gR$ 
is LNE, with a LNE constant uniform in $R\geq R_0$ and $i$. To conclude it
is enough to show that the definable family $(\sgm_q (X_\lR))_{R\geq R_0}$
is LNE in $\bS^q$. Since $Z$ is LNE in $\bS^q$, adapting the part of the 
proof of point (i) of Proposition \ref{prop:X-LNE-repr} about the definable 
family $(X_\lR)_{R\geq R_0}$  
being LNE, we show that $(\sgm_q(X_\lR))_{R\geq R_0}$ is LNE in $\bS^q$. 
\end{proof}
%
%
%
%
%
%
%
%
%
%
%
%
%
%
%
%
%
%
%
%
%
%
%
%
%
%
%
%
%
The following description of the LNE property of  the tangent cone is a straightforward consequence of Lemma~\ref{lem:LNE-inversion} combined with result of main result of \cite{FeSa1} (see also 
	\cite[Theorem 2.5.19]{BuBuIv} for a direct proof). 
	\begin{corollary}\label{cor:LNE-inversion}
		Let $X$ be a closed definable subset of $\Rq$ and let $\cC_1,\ldots,\cC_c$ be 
		the connected components of the germ $(X,\infty)$. 		
		If $\cC_1,\ldots,\cC_c$, are locally LNE at $\infty$ and  $\cC_i^\infty 
		\cap \cC_j^\infty = \emptyset$ for $1\leq i < j \leq c$, then 
		the tangent cone  $\wh{X^\infty}^+$ of $X$ at $\infty$ is LNE. 
	\end{corollary}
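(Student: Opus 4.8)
The plan is to transfer the hypotheses via the inversion $\ioq$ to a statement at the origin, then identify the image set with the tangent cone at $\infty$, and finally invoke the LNE criterion for tangent cones from \cite{FeSa1}. First I would apply Lemma~\ref{lem:LNE-inversion} to each connected component $\cC_i$ of the germ $(X,\infty)$: since each $\cC_i$ is locally LNE at $\infty$, the subset $\ioq(\cC_i\setminus\bbo)$ (more precisely its closure $Y_i := \clos(\ioq(\cC_i\setminus\bbo))$, which is a closed definable set containing $\bbo$ with connected germ at $\bbo$) is locally LNE at $\bbo$. Next I would record, exactly as in the computation carried out in the proof of Theorem~\ref{thm:main} for the stereographic map, that the asymptotic set of $Y_i$ at the origin equals the asymptotic set of $\cC_i$ at infinity, i.e. $S_\bbo Y_i = \cC_i^\infty$; hence the disjointness hypothesis $\cC_i^\infty\cap\cC_j^\infty=\emptyset$ becomes $S_\bbo Y_i \cap S_\bbo Y_j = \emptyset$ for $i\neq j$.

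Now set $Y := \bigcup_{i=1}^c Y_i$, a closed definable set with $\bbo\in Y$, whose connected components of the germ $(Y\setminus\bbo,\bbo)$ are precisely the $Y_i$ (after shrinking the radius $r_0$ so that $(Y_i)_{\leq r_0}\cap (Y_j)_{\leq r_0} = \{\bbo\}$ for $i\neq j$, which is possible because their asymptotic sets are disjoint). By construction each $Y_i$ is locally LNE at $\bbo$ and the asymptotic sets $S_\bbo Y_i$ are pairwise disjoint. The local analogue of Proposition~\ref{prop:X-LNE-repr}(ii) — or equivalently Lemma~\ref{lem:Yi-LNE} read in the converse direction together with the gluing argument in the proof of Proposition~\ref{prop:X-LNE-repr}(ii), applied to $(Y_{\leq r})_{r\leq r_0}$ instead of $(X_{\leq R})_{R\geq R_0}$ — then shows that $Y$ is itself locally LNE at $\bbo$. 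The key point enabling the gluing is again point (i) of Lemma~\ref{lem:asympt-not-LNE}: two curves in distinct components approaching $\bbo$ along distinct asymptotic directions have comparable inner and outer distances, so the only obstruction would come from a shared asymptotic direction, which is excluded.

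Finally I would observe that $S_\bbo Y = \bigcup_i S_\bbo Y_i = \bigcup_i \cC_i^\infty = X^\infty$, so the tangent cone of $Y$ at $\bbo$, namely $\wh{S_\bbo Y}^+$, is exactly $\wh{X^\infty}^+$. Since $Y$ is locally LNE at $\bbo$, the main result of \cite{FeSa1} (the tangent cone of a germ that is locally LNE at a point is LNE) applies and yields that $\wh{X^\infty}^+$ is LNE, as claimed. The step I expect to be the main obstacle is the second one: verifying carefully that the inversion converts the "local LNE at $\infty$ plus disjoint asymptotic sets at $\infty$" hypotheses into the corresponding hypotheses at $\bbo$ for the union $Y$, in particular checking that $Y_i$ and $Y_j$ genuinely become distinct components of $(Y\setminus\bbo,\bbo)$ and that the uniform LNE constants survive the transfer — but this is precisely the kind of bookkeeping already performed in the proofs of Lemma~\ref{lem:LNE-inversion} and Theorem~\ref{thm:main}, so it should go through with only routine adaptations.
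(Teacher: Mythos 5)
Your proposal is correct and matches the paper's (essentially unwritten) proof: the paper derives the corollary in one line from Lemma~\ref{lem:LNE-inversion} together with the main theorem of [FeSa1] that tangent cones of LNE germs are LNE, exactly the route you take, with your gluing of the inverted components $Y_i$ at $\bbo$ being the bookkeeping the paper leaves implicit (and which its proof of Claim~\ref{claim:Z-LNE-N} carries out in the same way). One minor citation slip: the fact that curves approaching $\bbo$ along distinct directions have comparable inner and outer distances is point (i) of Lemma~\ref{lem:asympt-u1-u2}, not of Lemma~\ref{lem:asympt-not-LNE}, which states the opposite (negative) implication.
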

	
	Similarly, applying Lemma~\ref{lem:LNE-inversion} to 
	the main result of \cite{MeSa} in the 
	sub-analytic case and \cite{Ngu} in the general case, we get the LNE property of links at infinity. 
	
	\begin{corollary}\label{cor:LNE-inversionLINK}
		Let $X$ be a closed definable subset of $\Rq$. 
		The subset $X$ is locally LNE at 
		$\infty$ if and only if  there exist a large radius $R_0$ and a positive 
		constant $L$, such 
		that the link $X_R$ is LNE with LNE constant $L$, for each $R\geq R_0$.  
	\end{corollary}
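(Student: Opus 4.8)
The plan is to deduce Corollary~\ref{cor:LNE-inversionLINK} by transporting the local-at-$\bbo$ statement about links (the main results of \cite{MeSa} in the subanalytic case and \cite{Ngu} in the general case) to the local-at-$\infty$ setting via the inversion $\ioq$. First I would recall precisely the local statement: a closed definable germ $(X',\bbo)$ is locally LNE at $\bbo$ if and only if there exist a small radius $r_0>0$ and a constant $L'>0$ such that the link $X'_r := X'\cap\bS_r^{q-1}$ is LNE with LNE constant $L'$ for every $0<r\leq r_0$. The proof then has two halves matching the ``if and only if''.

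For the forward direction, suppose $X$ is locally LNE at $\infty$. By Lemma~\ref{lem:LNE-inversion}, after removing $\bbo$ and taking closures componentwise (noting that $\ioq$ sends the components at $\infty$ to the components at $\bbo$, and one may apply the lemma to each component separately using Proposition~\ref{prop:X-LNE-repr}(i) to control constants), the set $\wtX := \ioq(X\setminus\bbo)$ is locally LNE at $\bbo$. Invoking \cite{MeSa,Ngu} there are $r_0>0$ and $L'>0$ so that $\wtX_r$ is LNE with constant $L'$ for $0<r\leq r_0$. Now $\ioq$ maps the sphere $\bS_r^{q-1}$ to $\bS_{1/r}^{q-1}$, so $X_R = \ioq(\wtX_{1/R})$ with $R=1/r$. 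Since $\ioq$ is a conformal diffeomorphism, its restriction to a fixed sphere $\bS_r^{q-1}$ scales the metric by the constant factor $1/r^2$; hence it is bi-Lipschitz between $\bS_r^{q-1}$ and $\bS_{1/r}^{q-1}$ with a \emph{scaling} distortion (not a uniformly bounded one), but because both the inner and outer distances on $\wtX_r$ scale by exactly the same factor under this conformal map restricted to one sphere, the LNE constant is preserved. Thus $X_R$ is LNE with the same constant $L'$ for every $R\geq R_0 := 1/r_0$. This gives the ``only if'' direction.

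For the converse, suppose there are $R_0>0$ and $L>0$ with $X_R$ LNE of constant $L$ for all $R\geq R_0$. Running the same conformal-scaling observation in reverse, $\wtX_r = \ioq(X_{1/r})$ is LNE with constant $L$ for $0<r\leq r_0 := 1/R_0$. By \cite{MeSa,Ngu}, $\wtX$ is locally LNE at $\bbo$, hence by Lemma~\ref{lem:LNE-inversion}, $X$ is locally LNE at $\infty$. Together the two directions prove the Corollary.

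The main obstacle I expect is the bookkeeping around connectedness: Lemma~\ref{lem:LNE-inversion} is stated under the hypothesis that the germ $(X\setminus\bbo,\bbo)$ is connected, and correspondingly the cited local link results are cleanest for connected germs, whereas Corollary~\ref{cor:LNE-inversionLINK} has no connectedness hypothesis. To handle the general case one must decompose $(X,\infty)$ into its finitely many components $\cC_R^1,\dots,\cC_R^c$, apply the argument to each, and then reassemble using point (ii) of Lemma~\ref{lem:asympt-not-LNE} (to see that local LNE at $\infty$ forces $\cC_i^\infty\cap\cC_j^\infty=\emptyset$) together with Proposition~\ref{prop:X-LNE-repr}; this is exactly the kind of gluing already performed in Section~\ref{section:LNEaI}, so the pieces are all available. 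The other point requiring a little care is the precise claim that a conformal diffeomorphism restricted to a single round sphere preserves LNE constants exactly — this follows because on that fixed sphere the pullback metric is a constant multiple of the round metric, so lengths of curves and chordal distances all rescale by one common constant, leaving the ratio $d_\inn/d$ unchanged.
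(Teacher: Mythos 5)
Your argument is essentially the paper's own proof: the paper deduces this corollary in one line by applying Lemma~\ref{lem:LNE-inversion} to the local link criterion of \cite{MeSa} and \cite{Ngu}, and your elaboration — that the inversion restricted to $\bS_r^{q-1}$ is just the homothety $\bx\mapsto\bx/r^2$, so it identifies $X_R$ with $\wtX_{1/R}$ while preserving LNE constants exactly — is precisely the detail the paper leaves implicit. The only soft spot is your closing remark on connectedness: no componentwise gluing is actually needed, because with the convention $d_\inn=+\infty$ between path components, a germ $(X,\infty)$ with several connected components is never locally LNE at $\infty$ and its links $X_R$ are disconnected hence never LNE, so both sides of the equivalence fail and the statement is trivially true in that case, leaving your inversion argument to cover the only substantive (connected) case.
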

From the main result we obtain the explicit gluing property.
	
	\begin{corollary}\label{cor:main}
		A connected closed definable subset $X$ of $\Rq$ is LNE if and only if  it is locally LNE at each of its points, each connected component 
		$\cC_1,\ldots,\cC_c$, of the germ $(X,\infty)$ is locally LNE at $\infty$, and
		$\cC_i^\infty\cap\cC_j^\infty = \emptyset$ for $1\leq i < j \leq c$.
	\end{corollary}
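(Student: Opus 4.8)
\textbf{Proof plan for Corollary~\ref{cor:main}.}
The plan is to combine Theorem~\ref{thm:main} with the local gluing result for compact sets (Lemma~\ref{lem:compact-LNE}), using the dictionary between the behaviour of $X$ at finite points and at infinity on one side, and the behaviour of $Z := \clos(\sgm_q(X))$ at the corresponding points of $\bS^q$ on the other. First I would observe that under $\sgm_q$ a point $\bx_0 \in X$ corresponds to the point $\sgm_q(\bx_0) \in Z$, and since $\sgm_q$ is a $C^\infty$ diffeomorphism of $\Rq$ onto $\bS^q \setminus N_q$, it is bi-Lipschitz on a neighbourhood of $\bx_0$; hence $X$ is locally LNE at $\bx_0$ if and only if $Z$ is locally LNE at $\sgm_q(\bx_0)$. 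Likewise, by Lemma~\ref{lem:main}, $X$ is locally LNE at $\infty$ if and only if $Z = \sgm_q(X) \cup N_q$ is locally LNE at $N_q$ in $\bS^q$. Thus \emph{$Z$ is locally LNE at every point of $\bS^q$} if and only if $X$ is locally LNE at every point of $\Rq$ and locally LNE at $\infty$.

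Next I would invoke Theorem~\ref{thm:main}: $X$ is LNE in $\Rq$ if and only if $Z$ is LNE in $\bS^q$. Since $Z$ is a closed subset of the compact Riemannian manifold $\bS^q$, and $X$ connected forces $Z$ connected, Lemma~\ref{lem:compact-LNE} applies: $Z$ is LNE in $\bS^q$ if and only if $Z$ is locally LNE at each of its points. Combining these three equivalences, $X$ is LNE if and only if $X$ is locally LNE at each of its points and locally LNE at $\infty$. It therefore remains to replace ``$X$ locally LNE at $\infty$'' by the two explicit conditions in the statement, namely that each connected component $\cC_1,\dots,\cC_c$ of the germ $(X,\infty)$ be locally LNE at $\infty$ and that $\cC_i^\infty \cap \cC_j^\infty = \emptyset$ for $i \ne j$.

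For that last reduction I would argue directly, without passing to the sphere. If $X$ is locally LNE at $\infty$, then each $\cC_R^i$ is locally LNE at $\infty$ by Lemma~\ref{lem:Ci-LNE} (after restricting to a connected representative, as in the proof of that lemma the argument is local to each end), and the disjointness $\cC_i^\infty \cap \cC_j^\infty = \emptyset$ follows from point (ii) of Lemma~\ref{lem:asympt-not-LNE}: if some $\bu$ belonged to two distinct $\cC_i^\infty$, one would produce arcs $\by_i$ with $d_\inn^X(\by_1(t),\by_2(t)) / |\by_1(t) - \by_2(t)| \to \infty$, contradicting the LNE property of $X$ on a neighbourhood of $\infty$. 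Conversely, if all $\cC_R^i$ are locally LNE at $\infty$ and the $\cC_i^\infty$ are pairwise disjoint, then point (ii) of Proposition~\ref{prop:X-LNE-repr} gives that $X_{\geq R_0}$ is LNE, i.e. $X$ is locally LNE at $\infty$. Plugging this equivalence into the chain above completes the proof.

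The main obstacle is purely bookkeeping rather than conceptual: one must make sure the local-LNE-at-$\infty$ statements for the components $\cC_i$ are phrased for \emph{connected} representatives (so that Lemma~\ref{lem:Ci-LNE} and Proposition~\ref{prop:X-LNE-repr} apply verbatim), and that the conic structure at infinity has been fixed once and for all with $R_0$ large enough that $X_{\geq R_0}$ is a union of $c$ cylinders and $X_{\leq R}$ is connected for $R \geq R_0$. Everything else is an immediate transcription of Theorem~\ref{thm:main}, Lemma~\ref{lem:main}, Lemma~\ref{lem:compact-LNE}, Lemma~\ref{lem:asympt-not-LNE}, and Proposition~\ref{prop:X-LNE-repr}.
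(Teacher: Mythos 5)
Your overall strategy (pass to $Z:=\clos(\sgm_q(X))$, use Theorem \ref{thm:main} and Lemma \ref{lem:compact-LNE}, and translate the local conditions) is the natural one, and your forward direction is fine: $X$ LNE gives local LNE of $Z$ at every point, hence of $X$ at every finite point, while Lemma \ref{lem:Ci-LNE} and point (ii) of Lemma \ref{lem:asympt-not-LNE} give the conditions at infinity. The gap is in the way you handle the point $N_q$, namely the intermediate condition ``$X$ is locally LNE at $\infty$''. By Definition \ref{def:LNE-infty} this means that $X\cap U$ itself is LNE for some neighbourhood $U$ of $\infty$; when the germ $(X,\infty)$ has $c\geq 2$ components, $X\cap U$ is disconnected, the inner distance between points of different components is $+\infty$, and so $X$ is \emph{never} locally LNE at $\infty$. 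Concretely, for $X$ a straight line in $\R^2$ the set $Z$ is a great circle, hence LNE and locally LNE at $N_q$, yet $X_{\gR}$ is two disjoint rays and is not LNE. This shows two of your steps are false as stated: the equivalence ``$Z$ locally LNE at $N_q$ iff $X$ locally LNE at $\infty$'' (here you invoke Lemma \ref{lem:main}, whose hypothesis explicitly requires the germ $(X,\infty)$ to be connected, which is exactly what may fail), and the converse of your ``last reduction'' (from the components being locally LNE at $\infty$ with disjoint asymptotic sets you conclude ``$X_{\geq R_0}$ is LNE'', which is impossible for $c\geq2$; moreover Proposition \ref{prop:X-LNE-repr}(ii) assumes in addition that $X_\lR$ is LNE and concludes that $X$ is LNE, not that $X_\gR$ is). Since your chain of equivalences passes through this false intermediate statement, the converse implication of the corollary is not established.

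The repair is to avoid the condition ``$X$ locally LNE at $\infty$'' altogether and work end by end at $N_q$. For the converse direction: each $\cC_i$ has connected germ at infinity, so Lemma \ref{lem:main} legitimately applies to each $\cC_i$ and gives that each branch $Y^i_R=\clos(\sgm_q(\cC_i))$ of $Z$ at $N_q$ is locally LNE at $N_q$; the hypothesis $\cC_i^\infty\cap\cC_j^\infty=\emptyset$ translates (via the differential of the chart $\phi$ in the proof of Theorem \ref{thm:main}) into disjointness of the sets $S_{N_q}Y^i_R$, and then the angle-separation argument of Claim \ref{claim:Z-LNE-N} (going through $N_q$ with length controlled by \cite[Corollary 3]{KuPa} and the law of cosines) shows that $Z$ is locally LNE at $N_q$. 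Combined with local LNE of $Z$ at the finite points (your bi-Lipschitz transfer) and Lemma \ref{lem:compact-LNE}, Theorem \ref{thm:main} then yields that $X$ is LNE. Symmetrically, for the necessity of the conditions at infinity your citations of Lemma \ref{lem:Ci-LNE} and Lemma \ref{lem:asympt-not-LNE}(ii) are correct and need no detour through Definition \ref{def:LNE-infty} applied to $X$.
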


%
%
\section{LNE models of definable sets}\label{section:LNEmodels}

We can apply our main theorem to obtain a classification result extending the main result of \cite{BiMo} for compact semi-algebraic sets to all closed definable sets:
\begin{theorem}\label{prop:LNE-model}
	Let $X$ be a closed connected definable subset of $\Rp$. There exists a definable
	bi-Lipschitz homeomorphism $(X,d_\inn^X) \to (X',d_\inn^{X'})$, where  $X'$ is a LNE closed definable subset  of $\Rq$.
\end{theorem}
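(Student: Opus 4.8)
\noindent The plan is to reduce to the compact case, treated in \cite{BiMo}, by passing through the one‑point compactification exactly as in Theorem~\ref{thm:main}, and then to push an LNE model of the compactified set back out to infinity. If $X$ is bounded it is compact, and \cite{BiMo} directly provides a closed definable LNE set $X'$ together with a definable inner bi‑Lipschitz homeomorphism $X\to X'$. So assume $X$ unbounded and set $Z:=\clos(\sgm_p(X))=\sgm_p(X)\cup\{N_p\}\subset\bS^p\subset\R^{p+1}$. Since $\sgm_p$ is a homeomorphism onto $\bS^p\setminus N_p$ and $X$ is closed, connected and unbounded, $Z$ is a compact connected definable set and $N_p$ is a non‑isolated point of $Z$. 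Applying \cite{BiMo} to $Z$ yields a definable inner bi‑Lipschitz homeomorphism $H:Z\to W$ onto a compact connected definable LNE set $W\subset\R^m$; composing $H$ with a translation and a homothety of $\R^m$ we may assume $H(N_p)=\bbo$ and $W\subset\bB^m_{1/2}$. Let $\phi:\bB^m_{1/2}\to\bS^m$, $\by\mapsto\bigl(\tfrac{2}{1+|\by|^2}\by,\tfrac{1-|\by|^2}{1+|\by|^2}\bigr)$, be the $m$‑dimensional analogue of the map used in the proof of Lemma~\ref{lem:main}; it is a $C^\infty$ semialgebraic diffeomorphism onto its image with $\phi(\bbo)=N_m$, hence bi‑Lipschitz on the compact set $W$. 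Put $\widetilde W:=\phi(W)\subset\bS^m$ and
\[
X':=\sgm_m^{-1}\bigl(\widetilde W\setminus N_m\bigr)\subset\R^m,\qquad
\Psi:=\sgm_m^{-1}\circ\phi\circ H\circ\sgm_p\colon X\to X'.
\]

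Then $\Psi$ is a composition of homeomorphisms onto the indicated sets, hence a definable homeomorphism, so $X'$ is connected (as $X$ is) and definable; $X'$ is closed in $\R^m$ because a finite limit point of $X'$ has $\sgm_m$‑image in the closed set $\widetilde W\setminus N_m$. Moreover $\clos(\sgm_m(X'))=\widetilde W$ (again because $X$ is unbounded, equivalently $N_m$ is non‑isolated in $\widetilde W$), and $\widetilde W$ is LNE in $\bS^m$: it is the bi‑Lipschitz image of the LNE set $W$ under $\phi$, LNE is invariant under bi‑Lipschitz homeomorphisms, and, as recalled before Lemma~\ref{lem:main}, being LNE in $\bS^m$ is equivalent to being LNE in $\R^{m+1}$. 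Therefore Theorem~\ref{thm:main}, applied in $\R^m$, gives that $X'$ is LNE. It remains only to show that $\Psi$ is bi‑Lipschitz for $d_\inn^X$ and $d_\inn^{X'}$.

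The idea for this is that the contraction of $\sgm_p$ near infinity is compensated exactly by the expansion of $\sgm_m^{-1}$ near $N_m$, the middle map $\phi\circ H$ contributing only a bounded distortion \emph{at the right scale}. Fix a power of two $R_0$ large enough and write $X$ as the union of $B:=X\cap\{|x|\le 2R_0\}$ and the dyadic annuli $A_k:=X\cap\{2^{k-1}\le|x|\le 2^k\}$, $2^{k-1}\ge 2R_0$. On $A_k$ the inverse stereographic projection $\sgm_p$ is conformal with factor $\tfrac{2}{1+|x|^2}\asymp 2^{-2k}$, so it multiplies the length of every rectifiable arc contained in $A_k$ by a factor comparable to $2^{-2k}$, uniformly in $k$, and on $B$ it is bi‑Lipschitz. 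The inner bi‑Lipschitz map $H$ multiplies the length of every arc by a factor in a fixed interval $[1/L,L]$. Using $d_Z\le d_\inn^Z$, the estimate $d_\inn^Z(\bz,N_p)\asymp|\bz-N_p|$ for $\bz\in Z$ near $N_p$ (the conical structure of $Z$ at $N_p$, as in the proof of Theorem~\ref{thm:main} via \cite[Corollary 3]{KuPa}) and the fact that $W$ is LNE (so $d_\inn^W\asymp d_W$), one gets $|H\sgm_p(x)|\asymp|x|^{-1}$ on $A_k$; consequently $H\sgm_p(A_k)$ lies in an annulus $\{c2^{-k}\le|z|\le C2^{-k}\}\cap W$ of bounded eccentricity, $\phi$ maps it into $\{|z-N_m|\asymp 2^{-k}\}\cap\widetilde W$ with bounded length distortion (and onto a compact subset of $\widetilde W\setminus N_m$ for bounded $k$), and $\sgm_m^{-1}$, conformal there with factor $\asymp 2^{2k}$, restores lengths up to a uniform factor. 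Composing the four scalings, $\Psi$ — and, symmetrically, $\Psi^{-1}$ — multiplies the length of every rectifiable arc lying in a single $\overline{A_k}$ or in $B$ by a factor in a fixed interval $[1/C,C]$ with $C$ independent of $k$. Since every rectifiable arc of $X$ is a countable concatenation of such arcs (cut each time $|x|$ meets a power of two) and length is additive along concatenations, $\Psi$ multiplies the length of every rectifiable arc of $X$ by a factor in $[1/C,C]$; taking infima over arcs joining two given points yields $d_\inn^{X'}(\Psi x,\Psi x')\asymp d_\inn^X(x,x')$ for all $x,x'\in X$.

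The main difficulty is the last step: the comparison of the four length scalings must be made uniform over all dyadic scales, which is precisely where the LNE‑ness of $W$ (to locate the $H$‑image of annuli around $N_p$) and the conical estimate $d_\inn^Z(\,\cdot\,,N_p)\asymp|\,\cdot\,-N_p|$ (to force $|\Psi(x)|\asymp|x|$) are genuinely used, and where the merely bi‑Lipschitz — not differentiable — nature of the map $H$ supplied by \cite{BiMo} obliges one to argue with lengths of arcs rather than with differentials.
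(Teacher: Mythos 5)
Your construction is the same as the paper's: compactify via $\sgm_p$, invoke the o-minimal version of \cite{BiMo} to get an inner bi-Lipschitz LNE model of $\clos(\sgm_p(X))$ with the distinguished point sent to a north pole, pull back through $\sgm_m^{-1}$, and get LNE-ness of $X'$ from Theorem~\ref{thm:main}. Where you genuinely diverge is in verifying that $\Psi$ is inner bi-Lipschitz: the paper conjugates the problem to the origin by the Euclidean inversion and applies its Lemma~\ref{lem:inner-inversion} (proved via a.e.\ metric speeds, under the linear bound $d_\inn(\cdot,\bbo)\leq C|\cdot|$), whereas you argue directly at infinity, tracking the conformal factors of $\sgm_p$ and $\sgm_m^{-1}$ on dyadic annuli and using exactly the two ingredients the paper also needs (the estimate $d_\inn^Z(\bz,N_p)\asymp|\bz-N_p|$ from \cite[Corollary 3]{KuPa} and LNE-ness of the compact model to get $|H\sgm_p(x)|\asymp|x|^{-1}$). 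The two verifications are mathematically equivalent in content; the paper's route buys a reusable statement (inversion preserves inner bi-Lipschitz maps, with no definability assumption), yours avoids introducing the inversion but re-proves that lemma inline at the cost of the scale bookkeeping.

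One step needs a small repair: the claim that every rectifiable arc of $X$ is a \emph{countable concatenation} of subarcs each contained in a single closed annulus $\overline{A_k}$, obtained by cutting whenever $|x|$ meets a power of two, is not literally true --- the level set $\{t:|\gamma(t)|\in\{2^j\}\}$ can be a Cantor-type set of positive measure along which the curve has positive length, so the complementary intervals need not exhaust the length. This is harmless: either bound $\ell(\Psi\circ\gamma)$ by refining any partition until each piece lies in a doubled annulus $\{2^{k-1}<|x|<2^{k+1}\}$ (on which your uniform scaling estimate applies) and summing, or run the a.e.\ speed argument as in the proof of Lemma~\ref{lem:inner-inversion}. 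With that adjustment the argument is correct.
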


Proof of Theorem~\ref{prop:LNE-model} is presented at the end of this section. Note that 
the proof of main theorem of \cite{BiMo}, using the results of \cite[Section 1]{KuPa},  adapts 
easily to the setting of subsets definable in any o-minimal structure expanding the real numbers 
field instead of the semi-algebraic one.

\medskip
Let $X$ be a subset of an Euclidean space $\Rp$, and let $d = d_\inn^X$ be the inner metric
on $X$.

\em A Lipschitz curve in X \em is a Lipschitz mapping $\gm:[a,b]\to (X,d)$. \em The speed of $\gm$ at $t\in (a,b)$ \em is defined
as 
$$
\lim_{h\to 0}\frac{d(\gm(t+h),\gm(t))}{|h|}
$$
when the limit exists. 
Per \cite[Theorem 3.6]{Haj} the speed function  of
a Lipschitz curve $\gm:[a,b] \to (X,d)$  is defined almost everywhere  and equal
$
|\dot\gm(t)| 
$
for almost every $t$. 
Its length is given as
$$
\ell(\gm) = \int_a^b |\dot\gm(t)| \rd t,  
$$
Every rectifiable curve can be reparametrized by arc-length. Since the length of a curve is independent on taking outer or inner metric on $X$, arc-length parametrization is Lipschitz with respect to both metrics.

\medskip
Let $X'$ be another subset of an Euclidean space $\Rq$ and let $d' = d_\inn^{X'}$ be the inner metric on $X'$. Assume that there exists a Lipschitz mapping 
$f: (X,d) \to (X',d')$ with Lipschitz constant $L_f$. If $\gm:[a,b] \to (X,d)$ is a Lipschitz curve, then the curve $f\circ\gm:[a,b] \to (X',d')$ is also Lipschitz and 
such that for almost all $t$ the following  holds
$$
|\dot{\overgroup{f\circ\gm}} (t)| = |\dot{\overgroup{f\circ\gm}}|(t) 
\leq  L_f 
|\dot\gm(t)|.
$$

Recall that by \cite{GrOl} a mapping $\phi:(X,\bbo)\to (X',\bbo)$ is outer 
bi-Lipschitz between two closed germs of~$\Rp$ and $\Rq$ respectively if and 
only if
	$\ioq\circ\phi\circ\iop:(\iop(X\setminus\bbo),\infty)\to 
(\ioq(X'\setminus\bbo),\infty)$
is also outer bi-Lipschitz. The following Lemma~\ref{lem:inner-inversion} is 
a counterpart for the inner metrics, note that we do not assume that the sets 
are definable.
\begin{lemma}\label{lem:inner-inversion}
	Let $X\subset\Rp$ and $X'\subset\Rq$ be closed subsets of Euclidean spaces, both containing 
	 the origin and such that germ at $\bbo$ of each set is connected after removing the origin. Denote
	 $\wt{X} := \clos(\iop^{-1}(X\setminus \bbo))$ and $\wt{X'}:=\clos(\ioq({X'}\setminus \bbo))$.	
	 
	 Assume there exists  a positive constant $C$ such that
	 $$
	 d_\inn^{X}(\bx,\bbo) \leq C \,|\bx|, \; \forall \bx\in X, \;\; {\rm and} \;\; d_\inn^{X'}(\bx',\bbo) \leq C \,|\bx'|, \; \forall\bx'\in X',
	 $$
	 
	If  $\phi:(X,d_\inn^X) \to (X',d_\inn^{X'})$ is an inner bi-Lipschitz homeomorphism with	$\phi(\bbo) = \bbo$, 
	then
	the homeomorphism 
	$$
	\wtphi := \ioq\circ\phi\circ\iop^{-1} :\wtX \to \wt{X'},
	$$ 
	with $\wtphi(\bbo)=\bbo$ if $X$ is unbounded, is also inner bi-Lipschitz.
\end{lemma}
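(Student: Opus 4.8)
The plan is to reduce the whole statement to one \emph{radial control} estimate for $\phi$ together with the elementary fact that the inversion $\ioq$ multiplies the speed of a curve at a point $\bx$ by $|\bx|^{-2}$. Write $L_\phi\ge 1$ for a bi-Lipschitz constant of $\phi$ and set $K_0:=L_\phi C$. First I would prove
$$
\frac{|\bx|}{K_0}\;\le\;|\phi(\bx)|\;\le\;K_0\,|\bx|\qquad\text{for all }\bx\in X .
$$
This follows at once from the hypotheses: since the inner distance dominates the Euclidean one and $\phi(\bbo)=\bbo$, we get $|\phi(\bx)|\le d_\inn^{X'}(\phi(\bx),\bbo)\le L_\phi\,d_\inn^{X}(\bx,\bbo)\le L_\phi C\,|\bx|$ from the bound on $X$; applying the same inequality to $\phi^{-1}$ (which satisfies the hypotheses with the same $C$ and the same bi-Lipschitz constant) at the point $\phi(\bx)$ gives $|\bx|\le K_0|\phi(\bx)|$. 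In particular $\phi$ sends points going to $\infty$ in $X$ to points going to $\infty$ in $X'$, so that $\wtphi=\ioq\circ\phi\circ\iop^{-1}$ indeed extends continuously by $\wtphi(\bbo)=\bbo$ to a homeomorphism $\wt{X}\to\wt{X'}$ when $X$ is unbounded; and when $X$ is bounded, so is $X'$ and $\bbo\notin\wt{X}\cup\wt{X'}$.

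Next I would establish the basic length estimate: for every rectifiable curve $\wt\gm:[a,b]\to\wt{X}$ avoiding $\bbo$ one has $\lgt(\wtphi\circ\wt\gm)\le L_\phi^{3}C^{2}\,\lgt(\wt\gm)$. Since $[a,b]$ is compact and $\wt\gm$ continuous, $\inf_t|\wt\gm(t)|>0$, so $\gm:=\iop^{-1}\circ\wt\gm$ is a rectifiable curve in $X$ lying in a compact subset of $\Rp\setminus\bbo$, hence of finite length; reparametrise it by arc length, so $|\dot\gm|\equiv 1$. As the inner distance in $X$ between two points of $\gm$ is at most the length of the sub-arc joining them, $\gm$ is $1$-Lipschitz for $d_\inn^{X}$, hence $c:=\phi\circ\gm$ is $L_\phi$-Lipschitz for $d_\inn^{X'}$ and, by the speed inequality recalled above, $|\dot c|\le L_\phi$ a.e. Using $\iop^{-1}\circ\gm=\wt\gm$, $\ioq\circ c=\wtphi\circ\wt\gm$, the scaling of speeds by $|\cdot|^{-2}$ under $\ioq$, and the radial control $|c|=|\phi\circ\gm|\ge|\gm|/K_0$, I obtain
$$
\lgt(\wtphi\circ\wt\gm)=\int\frac{|\dot c(s)|}{|c(s)|^{2}}\,\rd s\;\le\;L_\phi K_0^{2}\int\frac{\rd s}{|\gm(s)|^{2}}\;=\;L_\phi K_0^{2}\,\lgt(\iop^{-1}\circ\gm)\;=\;L_\phi^{3}C^{2}\,\lgt(\wt\gm).
$$

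To conclude, I would take $\wt\bx_1,\wt\bx_2\in\wt{X}$, fix $\ve>0$, and pick a rectifiable curve $\wt\gm$ in $\wt{X}$ joining them with $\lgt(\wt\gm)\le(1+\ve)\,d_\inn^{\wt{X}}(\wt\bx_1,\wt\bx_2)$. If $\wt\gm$ meets $\bbo$, let $t_0\le t_1$ be the first and last such parameters; replacing $\wt\gm|_{[t_0,t_1]}$ by the constant curve $\bbo$ keeps the curve inside $\wt{X}$, joins the same endpoints, and does not increase the length, so I may assume $\wt\gm\equiv\bbo$ on $[t_0,t_1]$. Applying the length estimate to $\wt\gm|_{[a,\,t_0-\ve']}$ and letting $\ve'\to 0$ (using the continuity of $\wtphi$ at $\bbo$) bounds $\lgt(\wtphi\circ\wt\gm|_{[a,t_0]})$ by $L_\phi^{3}C^{2}\,\lgt(\wt\gm|_{[a,t_0]})$, and symmetrically on $[t_1,b]$; since $\wtphi\circ\wt\gm$ is constant on $[t_0,t_1]$, summing gives $\lgt(\wtphi\circ\wt\gm)\le L_\phi^{3}C^{2}\,\lgt(\wt\gm)$, hence $d_\inn^{\wt{X'}}(\wtphi(\wt\bx_1),\wtphi(\wt\bx_2))\le L_\phi^{3}C^{2}(1+\ve)\,d_\inn^{\wt{X}}(\wt\bx_1,\wt\bx_2)$, and $\ve\to0$ gives one inequality. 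Because the hypotheses are symmetric in $(X,\phi)$ and $(X',\phi^{-1})$ and $\iop^{-1}=\iop$, $\ioq^{-1}=\ioq$, so that $\wtphi^{-1}=\iop\circ\phi^{-1}\circ\ioq^{-1}$ has exactly the same shape, the same argument applied to $\phi^{-1}$ yields the reverse inequality, so $\wtphi$ is inner bi-Lipschitz with constant $L_\phi^{3}C^{2}$. The point needing care is the behaviour near $\bbo\in\wt{X}$, i.e. curves in $X$ running off to infinity: there $\iop^{-1}$ has unbounded derivative, and the reason the estimate survives is precisely that the radial control makes the compensating decay of $\ioq$ exact, so the constant $L_\phi^{3}C^{2}$ above is uniform, independent of how close $\wt\gm$ approaches $\bbo$; that uniformity is what makes the splitting in the last step legitimate, everything else being routine once one recalls that lengths are insensitive to the choice of inner or outer ambient metric.
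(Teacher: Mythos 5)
Your proof is correct and takes essentially the same route as the paper's: you derive the radial control $|\phi(\bx)|\geq |\bx|/(L_\phi C)$ from the hypothesis $d_\inn(\cdot,\bbo)\leq C|\cdot|$ together with bi-Lipschitzness, use the $|\bx|^{-2}$ scaling of speeds under the inversion to bound the length of $\wtphi\circ\wtgm$ with the same constant $L_\phi^{3}C^{2}$, reduce to curves avoiding $\bbo$, and conclude for $\wtphi^{-1}$ by symmetry. The only cosmetic differences are that you parametrize the curve downstairs in $X$ by arc length where the paper takes $\wtgm$ unit speed, and you handle curves through the origin by collapsing the middle segment rather than the paper's $\epsilon$-perturbation.
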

\begin{proof}
We need only show that $\wtphi$ is inner Lipschitz, since the same argument 
goes for $\wtphi^{-1}$. It suffices to show there exists a constant $N$ such 
that, for any {unit speed} rectifiable curve $\wt{\gamma}$ connecting $x$ 
with $y$ in $\wt{X}$, we have
$$
\ell(\wt\phi\circ\wt{\gamma}) \leq N \ell(\wt{\gamma}) .
$$
		Moreover, if $\wtgm$ passes through the origin, for every $\epsilon>0$ there exists $\wtgm':[a,b]\to \wt{X}$ that does not contain $\bbo$ such that $\ell(\wtgm)+\epsilon = \ell(\wtgm')$ and from definition of $\wtphi$ the curve $\iota_p\circ\wtgm'$ is well-defined. Thus without loss of generality we can assume $\bbo\notin \wtgm([a,b])$.
				
		Since $\phi$ is bi-Lipschitz, there exists a positive constant $L$ such that for any Lipschitz curve 
	$\gm:[a,b] \to (X,d)$ we have
	$$
	\frac{1}{L}\,\ell(\gm) \; \leq \; \ell(\phi\circ\gm) \; \leq \; L\,\ell(\gm).
	$$
	Therefore  
	$$
	d_\inn^{X'}(\phi(\bx),\bbo) \leq L\,C \,|\bx| \;\; {\rm and} \;\; d_\inn^X(\phi^{-1}(\bx'),\bbo) 
	\leq L\,C \,|\bx'|. 
	$$
	
	
	Let $\wtgm$ be a Lipschitz unit speed curve $[a,b] \to (\wt{X},d_\inn^{\wt{X}})$ and let 
	$\gm = \iop(\wtgm)$. Since $|\bx|^2 D_\bx \iop$ is an isometry at any $\bx\neq \bbo$, we deduce that $\gm$ is a Lipschitz mapping $[a,b] \to (X,d_\inn^X)$. Moreover, simple calculation shows that for a.e. 
	$t$ the following holds 
	$$
	|\dot\gm(t)| = \frac{1}{|\wtgm(t)|^2} = |\gm(t)|^2.
	$$
Thus  
		$$
	\left| \frac{\rd}{\rd t} (\wtphi\circ\wtgm) \right|
	= \left| \frac{\rd}{\rd t}(\ioq\circ\phi\circ\gm)\right| = 
	\frac{\left| \frac{\rd}{\rd t}(\phi\circ\gm ) \right| }{ |\phi\circ\gm|^2} \leq 
	\frac{L \cdot |\dot\gm(t)|}{|\phi\circ\gm(t)|^2}\leq L (LC)^2
	$$
almost everywhere and we proved $\wtphi$ is inner Lipschitz.
\end{proof}
Let $X$ be a closed definable subset of $\Rn$ containing the origin. 
Assuming that there exists $r_0$ small enough such that $X_{\leq r}$ is 
connected for any positive radius $r\leq r_0$. Using the LNE decomposition of Section 
\ref{section:L-RD}, 
one can easily observe that there exists a constant $C_r>0$ such that
$$
d_\inn^{X_{\leq r}}(\bx,\bbo) \;\leq\; C_r\, |\bx| \;\; \forall\;\bx\in X_{\leq r}.
$$
Moreover, $C_r \to 1$ as $r\to 0$. Using the inversion we conclude that if $X$ is closed, and definable, there exists a positive constant $C$ such that  
 $$
 d_\inn^X(\bx,\bbo) \;\leq\; C\, |\bx| \;\; \forall\;\bx\in X.
 $$

\begin{proof}[Proof of Theorem \ref{prop:LNE-model}] 
Let $S = \sgm_p(X)\cup N_p$ be the closure of the one-point compactification of $X$
in $\bS^p$ where $N_p$ is the north pole. Therefore there exists a definable compact connected subset $S'$ of $\Rq$  which is LNE and inner bi-Lipschitz homeomorphic to $S$
via $\phi:S\to S'$. Without loss of generality we can assume that $S'$ is contained in
$\bS^q$ and that $\phi(N_p) = N_q$ the north pole of $\bS^q$. 

The subset $X' := \sgm_q^{-1}(S'\setminus N_q)$ is definable and LNE by Theorem 
\ref{thm:main}. Let $\psi:= \sgm_q^{-1}\circ \phi \circ
\sgm_p|_X$, which is a homeomorphism from $X \to X'$. The differentiability of 
$\sgm_p$ and $\sgm_q^{-1}$ guarantees that the inner bi-Lipschitz behaviour of 
$\psi$ has to be checked only at infinity.   

Since the mapping $\iop\circ\sgm_p^{-1} : (\bS^p,N_p) \to (\Rp,\bbo)$ is a smooth 
diffeomorphism the mapping 
$$
\ioq^{-1} \circ\psi\circ \iota_p : (\iop(X),\bbo) \to (\ioq(X'),\bbo) 
$$
is inner bi-Lipschitz in a neighbourhood of the origin, therefore by Lemma
\ref{lem:inner-inversion} we deduce that $\psi$ is inner bi-Lipschitz in a neighbourhood of 
infinity.
\end{proof}

Let us state the generalized version of corollary of~\cite{BiMo}
\begin{corollary}
	Let $X$ be a closed definable set. Let $\mathcal{L}_X$ be the set
	of all definable sets bi-Lipschitz equivalent to $X$ with respect to the inner
	metric. We define a semiorder relation on $\mathcal{L}_X$ in the following way: $X \prec Y$  if
	there exists a map $F: X \to Y$ bi-Lipschitz with respect to the inner metric and
	Lipschitz with respect to the outer metric. Then $\mathcal{L}_X$ contains a unique (up to
	a bi-Lipschitz equivalence with respect to the outer metric) maximal element.
	This element is Lipschitz normally embedded.
\end{corollary}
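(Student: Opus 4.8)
The plan is to reduce this corollary of \cite{BiMo} to Theorem~\ref{prop:LNE-model} together with a maximality argument. First I would take $X' \in \mathcal{L}_X$ the LNE representative produced by Theorem~\ref{prop:LNE-model}, so that $X'$ is definable, LNE, and inner bi-Lipschitz equivalent to $X$; hence $X' \in \mathcal{L}_X$. The goal is to show that $X'$ is a maximal element for $\prec$ and that it is unique up to outer bi-Lipschitz equivalence. For maximality, I must show that for every $Y \in \mathcal{L}_X$ there is a map $Y \to X'$ that is inner bi-Lipschitz and outer Lipschitz, i.e. $Y \prec X'$.

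The key point is that \emph{any} LNE set is $\prec$-maximal among the sets inner bi-Lipschitz equivalent to it. Indeed, suppose $Y \in \mathcal{L}_X$ and let $g : X' \to Y$ be an inner bi-Lipschitz homeomorphism (such a $g$ exists since both lie in $\mathcal{L}_X$). I claim $g^{-1} : Y \to X'$ is automatically outer Lipschitz. For $\by_1,\by_2 \in Y$, we have $d_{X'}(g^{-1}(\by_1),g^{-1}(\by_2)) \le d_\inn^{X'}(g^{-1}(\by_1),g^{-1}(\by_2)) \le L_{g^{-1}} d_\inn^{Y}(\by_1,\by_2)$, where $L_{g^{-1}}$ is the inner Lipschitz constant of $g^{-1}$; but since $X'$ is LNE with LNE constant $\Lambda$, we bound $d_\inn^{X'} \le \Lambda d_{X'}$, and combining via $g$ we may equally route the estimate so that $d_\inn^{X'}(g^{-1}(\by_1),g^{-1}(\by_2)) \le L_{g^{-1}} d_\inn^Y(\by_1,\by_2) \le L_{g^{-1}} L_g\, d_\inn^{X'}(\cdots)$, which is consistent; the essential inequality is $d_{X'}(g^{-1}(\by_1),g^{-1}(\by_2)) \le d_\inn^{X'}(g^{-1}(\by_1),g^{-1}(\by_2)) \le L_{g^{-1}}\, d_\inn^Y(\by_1,\by_2)$. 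Thus $g^{-1}$ is outer Lipschitz with constant $L_{g^{-1}}$, and it is inner bi-Lipschitz by hypothesis, so $Y \prec X'$. This shows $X'$ is a maximal element of $(\mathcal{L}_X,\prec)$.

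For uniqueness up to outer bi-Lipschitz equivalence, suppose $X''$ is another maximal element. By maximality of $X'$ we get $X'' \prec X'$, realized by some $F : X'' \to X'$ which is inner bi-Lipschitz and outer Lipschitz. By maximality of $X''$ we get $X' \prec X''$, realized by some $G : X' \to X''$ inner bi-Lipschitz and outer Lipschitz. Then $F \circ G : X' \to X'$ is inner bi-Lipschitz and outer Lipschitz; one would like to conclude $F$ (or $G$) is outer bi-Lipschitz. The cleanest route: since $G$ is outer Lipschitz and $F$ is outer Lipschitz, it remains to see $F^{-1}$ is outer Lipschitz. Now $F^{-1} : X' \to X''$ is inner bi-Lipschitz (inverse of an inner bi-Lipschitz map), and if $X''$ is itself LNE the same argument as above forces $F^{-1}$ to be outer Lipschitz, giving $F$ outer bi-Lipschitz. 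So the remaining gap is: \emph{every} maximal element is LNE. This follows because the LNE representative $X'$ given by Theorem~\ref{prop:LNE-model} is maximal, so $X' \prec X''$ and $X'' \prec X'$; since $X' \prec X''$ is realized by $G$ inner bi-Lipschitz and outer Lipschitz, and $X'$ is LNE, for $\bx_1'',\bx_2'' \in X''$ write $\bx_i'' = G(\bx_i')$ and estimate $d_\inn^{X''}(\bx_1'',\bx_2'') \le L_G\, d_\inn^{X'}(\bx_1',\bx_2') \le L_G \Lambda\, d_{X'}(\bx_1',\bx_2') \le L_G \Lambda L_{G^{-1}}\, d_{X''}(\bx_1'',\bx_2'')$ once we know $G^{-1}$ is outer Lipschitz — but $G^{-1} = F\circ(F^{-1}G^{-1})$... this circularity is the real subtlety. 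The hard part will be precisely this bootstrapping: showing that $\prec$-maximality alone (without already knowing LNE-ness) forces LNE. The clean fix, which I would adopt, is to observe that by Theorem~\ref{prop:LNE-model} the maximal LNE representative $X'$ exists, and then any maximal $X''$ satisfies $X'' \prec X'$ via a map $F$ that is inner bi-Lipschitz and outer Lipschitz; composing with the inverse inner bi-Lipschitz map and using that $X'$ is LNE shows $F$ is outer bi-Lipschitz, hence $X''$ is outer bi-Lipschitz equivalent to the LNE set $X'$, hence itself LNE, and outer-bi-Lipschitz-unique. Thus the whole argument hinges on first securing one LNE maximal element from Theorem~\ref{prop:LNE-model}, and the maximality/uniqueness of all others is then a formal consequence of the LNE inequality $d_\inn \le \Lambda\, d_{\mathrm{outer}}$.
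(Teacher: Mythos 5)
The paper gives no separate proof of this corollary: it is stated as an immediate consequence of Theorem \ref{prop:LNE-model} combined with the argument of \cite{BiMo}, so your proposal has to be measured against that intended argument rather than a written-out proof.

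The central claim of your proposal --- that an inner bi-Lipschitz homeomorphism $g^{-1}\colon Y\to X'$ \emph{into} an LNE set is automatically outer Lipschitz --- is false, and this is a genuine gap, not a presentational one. Your displayed chain only yields $d_{X'}(g^{-1}(\by_1),g^{-1}(\by_2))\le L_{g^{-1}}\,d_\inn^{Y}(\by_1,\by_2)$; outer Lipschitzness requires a bound by the outer distance $|\by_1-\by_2|$, and $d_\inn^{Y}$ dominates the outer distance with no reverse control precisely when $Y$ fails to be LNE --- which is the only interesting case. Concretely, let $Y=\{y^2=x^3,\ 0\le x\le 1\}$ and let $X'$ be a segment, identified with $Y$ by a definable inner bi-Lipschitz map: the points $(t^2,\pm t^3)$ are sent to points at mutual distance comparable to $t^2$, while their outer distance in $Y$ is $2t^3$, so \emph{no} inner bi-Lipschitz map $Y\to X'$ can be outer Lipschitz. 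The correct automatic implication goes the other way: if the \emph{source} is LNE with constant $\Lambda$, then an inner $L$-Lipschitz map $F$ is outer Lipschitz, because $d_{\rm target}(F(\bx_1),F(\bx_2))\le d_\inn^{\rm target}(F(\bx_1),F(\bx_2))\le L\,d_\inn^{\rm source}(\bx_1,\bx_2)\le L\Lambda\,d_{\rm source}(\bx_1,\bx_2)$.

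As a consequence, with the relation $\prec$ taken literally as written, what one actually proves is $X'\prec Y$ for every $Y\in\mathcal{L}_X$, while the cusp example shows $Y\not\prec X'$ in general; so under the literal reading the LNE model is the least element and is \emph{not} maximal, and no argument (yours included) can establish maximality in that reading. The statement must be understood with the convention of \cite{BiMo}: the distinguished (maximal) element is the one \emph{receiving} an inner bi-Lipschitz, outer Lipschitz map from every other element of $\mathcal{L}_X$. Once the direction is fixed, the intended proof is short and uses exactly the two facts above: existence of an LNE element $X'$ is Theorem \ref{prop:LNE-model}; any $Z\in\mathcal{L}_X$ admitting an inner bi-Lipschitz, outer Lipschitz map $F\colon Z\to X'$ is itself LNE, since $d_\inn^{Z}(\bz_1,\bz_2)\le L\,d_\inn^{X'}(F(\bz_1),F(\bz_2))\le L\Lambda\,d_{X'}(F(\bz_1),F(\bz_2))\le L\Lambda C\,d_{Z}(\bz_1,\bz_2)$; and any inner bi-Lipschitz map between two LNE sets is outer bi-Lipschitz, which gives both the extremality of $X'$ and the uniqueness up to outer bi-Lipschitz equivalence. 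Your uniqueness paragraph contains these ingredients, but it rests on the false maximality lemma, and the circularity you yourself flag is resolved precisely by the LNE-\emph{source} implication, not the LNE-target one you invoke.
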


The main result of \cite{BiMo} also implies that the Hausdorff distance $d_{\cH}$ between the set $X$ and an inner bi-Lipschitz homeomorphic LNE set $X'$ can be taken arbitrarily small (while  $\Rp$ is treated as a linear subspace of $\Rq$ and the dimension $q\geq p$ depends only on $X$). We cannot obtain such result in non-compact case by Corollary~\ref{cor:main}, see Example~\ref{exLNEnotclose} of the parabola. 
%
%
%
%
%
%
%
%
%
%
%
%
%
%
%
%
%
%
%
%
%
%
%
%
%
%
%
%
%
%
%
%
%
%
%
%
%
\section{Examples within and outside the definable 
world}\label{section:examples}
This last section complements the case of complex curves in
\cite[Section 9]{CoGrMi1} and points out occurrences of 
Theorem \ref{thm:main} outside the definable setting.

\medskip
Suppose we are compactifying the real plane $\R^2$ as the real projective
plane $\bP^2$, that is 
$$
\bP^2 = \R^2 \cup L_\infty,
$$
where $L_\infty$ is the line at infinity. If $[u:v:w]$ are coordinates 
over $\bP^2$, we identify $\R^2$ with the chart $w\neq 0$ so that 
$L_\infty = \{w=0\}$.

Suppose that $\bP^2$ is equipped 
with a given smooth Riemannian metric $\bg$. Let $d_\bg$ be the distance 
function induced by $\bg$. For $Z$ any subset of $\bP^2$, let $d_{\bg,Z}$
be the outer distance over $Z$ w.r.t. the distance $d_\bg$, 
and let $d_{\bg,\inn}^Z$ be the inner distance over $Z$ w.r.t. the Riemannian 
metric $\bg$. 
The subset $Z$ of $\bP^2$ is \em projectively Lipschitz Normally Embedded \em
(shorten to PLNE) if there exists a positive constant $C$
such that $d_{\inn,\bg}^Z \leq C\cdot d_{\bg,Z}$.

\begin{example}
The real affine parabola $\{(x,x^2), \; x\in \R\}$ is not locally
LNE at $\infty$ (as a subset of $\R^2$, by point (i) of Lemma 
\ref{lem:asympt-not-LNE}). Yet its closure $\{vw=u^2\}$ in $\bP^2$ is connected 
and non-singular, thus is PLNE (by \cite[Lemma  
2.8]{CoGrMi1}).
\end{example}


\begin{example}
The projective curve $X := \{v(w-u)(w+u) - w^3 = 0\}$ is irreducible, has a 
simple node at $[0:1:0]$, therefore it is PLNE (the real version  of 
\cite[Proposition 4.2]{CoGrMi1} is also true). Its affine part 
$X \setminus L_\infty$ consists of 
three connected components, the graphs 
$$
\Gm^* := \left\{\left(x,\frac{1}{(1-x)(1+x)}\right), \; x \in I^* \right\} 
$$
where $I^- = (-\infty,-1)$, $I^+ = (1,\infty)$ and $I^0 = (-1,1)$. It is
easy to check that $\Gm^\pm$ is LNE while $\Gm^0$ is not locally LNE at 
$\infty$ (by point (ii) of Lemma \ref{lem:asympt-not-LNE}).
\end{example}

\begin{example}\label{exLNEnotclose}
Let $X$ be the real parabola. Its LNE model is simply the real line $\R$. Any outer bi-Lipschitz embedding  
		of the line $\R$ in $\Rq$, $q\geq 1$, will have two distinct points at infinity. Since the parabola cannot be isometrically embedded into $\R$, thus consider embeddings of the parabola into $\Rq$, $q\geq 2$. 
		  Clearly, 
		as $R\to \infty$, for any outer bi-Lipschitz embedding $Y$ of the line $\R$ in $\Rq$ we have 
		$$
		d_\cH(X_{\leq R},Y_{\leq R}) = O(R).
		$$
\end{example}
Let us conclude with some non-definable examples.

\begin{example}
Let $e$ be a real number, and let $s_e:[1,\infty) \to \R$ be the function 
defined as $t\mapsto t^e \sin(t)$. Let $\gm_e :[1,\infty) \to \R^2$ be the 
smooth parametrization $t\mapsto (t,s_e(t))$ of $S_e$ the graph of the 
function $s_e$. Since the function $s_e$ oscillates at $\infty$, it is 
not definable in any o-minimal structure.

Using elementary estimates obtained from $\gm_e$ and $\gm_e'$, we verify
that $S_e$ is locally LNE at $\infty$ if and only if $e \leq 0$. We also 
check that the inversion $\clos(\iota_2(S_e))$ is locally LNE
at $\bbo$ if and only if $e\leq 0$, in other words if and only if $S_e$ is 
locally LNE at $\infty$. 
\end{example}

\begin{example}
Consider the following smooth plane curve, which is also not definable in
any o-minimal structure:
$$
\cS := \clos(\{e^t\cdot e^{i2\pi t}, \; t\in \R\}).
$$
A simple computation allows to check that $\cS$ is LNE with LNE constant 
$\sqrt{1+4\pi^2}$. A similar computation also shows that 
$\clos(\iota_2(\cS))$ is LNE with LNE constant $\sqrt{1+4\pi^2}$. Thus $\cS$ is LNE both in $\R^2$ and under the inverse of stereographic projection in $\bS^2$.
%
%
%
%

As a striking contrast, working with $\bP^2$ as the 
compactification of $\R^2$, 
we  
observe that $\cS \cup L_\infty$ is the closure of $\cS$ taken in $\bP^2$. 
Since $\cS$ accumulates on the whole $L_\infty$, its length w.r.t. $\bg$ is 
infinite, therefore we deduce 
$$
\sup\left\{\frac{ d_{\inn,\bg}^\cS (\bx,\bx') }{ d_{\bg,\cS}(\bx,\bx')}, \;
\bx,\bx' \in \cS, \; \bx \neq \bx'
\right\} = \infty,
$$
in other words $\cS$ is not PLNE.
\end{example}
%
%
%
%
%
%
%
%
%
%
%
%
%
%
%
%
%
%
%
%
%
%
%
%
%
%
%
%

%
%
%
%
%
%
%
%
%
%
%
%
%
%
%
%
%
%
%
%
%

%
%
%
%
%
%
%
%
%
%
%
%
%
%
%
%
%
%
%
%
%
%
%
%
%
%
%
%
%
%

%
%
%
%
%
%
%
%
%
%
%
%
%
%
%
%
%
%
%
%
%
%
%
%
%
%
%

\end{document}